\newtheorem{theorem}{Theorem}[section]
\newtheorem{lemma}[theorem]{Lemma}
\newtheorem{proposition}[theorem]{Proposition}
\newtheorem{claim}[theorem]{Claim}
\newtheorem{question}{Question}
\newtheorem*{thm:main1}{Theorem~\ref{thm:main1}}
\newtheorem*{thm:main2}{Theorem~\ref{thm:main2}}
\newenvironment{clproof}{\begin{list}{}{%
              \setlength{\leftmargin}{3mm}%
              } \item {\it Proof.} }{\hfill$\lozenge$\end{list}}
\newcommand\abs[1]{\lvert #1\rvert}
\newcommand{\dist}{\operatorname{dist}}
\begin{document}
\title{Graphs without two vertex-disjoint $S$-cycles}

\author{Minjeong Kang}
\author[1,2]{O-joung Kwon\thanks{
This work was supported by Incheon National University (International Cooperative) Research Grant in 2018.}}
\author{Myounghwan Lee}

\affil{Department of Mathematics, Incheon National University, Incheon, South Korea.}
\affil[2]{Discrete Mathematics Group, Institute for Basic Science (IBS), Daejeon, South Korea}

\date\today
\maketitle

\footnotetext{Corresponding author: O-joung Kwon. E-mail addresses: \texttt{minj2058@gmail.com} (M. Kang), \texttt{ojoungkwon@gmail.com} (O. Kwon), \texttt{sycuel2@gmail.com} (M. Lee) }

\begin{abstract}
Lov\'asz (1965) characterized graphs without two vertex-disjoint cycles, which implies that 
such graphs have at most three vertices hitting all cycles.
In this paper, we ask whether such a small hitting set exists for $S$-cycles, when a graph has no two vertex-disjoint $S$-cycles.
For a graph $G$ and a vertex set $S$ of $G$, an $S$-cycle is a cycle containing a vertex of $S$.

We provide an example $G$ on $21$ vertices where $G$ has no two vertex-disjoint $S$-cycles, but three vertices are not sufficient to hit all $S$-cycles.
On the other hand, we show that four vertices are enough to hit all $S$-cycles whenever a graph has no two vertex-disjoint $S$-cycles.

\end{abstract}

\section{Introduction}\label{sec:intro}

In this paper, we consider finite graphs that may have loops and multiple edges.
Erd\H{o}s and P\'osa~\cite{erdHos1962maximal} proved that for every graph $G$ and every positive integer $k$, 
$G$ contains either $k$ vertex-disjoint cycles or a vertex set of size $\mathcal{O}(k\log k)$ hitting all cycles.
This celebrated paper stimulated many researcher to find other classes that satisfy a similar property.
We say that a class $\mathcal{C}$ of graphs has the \emph{Erd\H{o}s-P\'osa property} 
if there is a function $f:\mathbb{Z}\rightarrow \mathbb{R}$ such that
for every graph $G$ and every integer $k$, $G$ contains either $k$ vertex-disjoint subgraphs each isomorphic to a graph in $\mathcal{C}$, 
or a vertex set of size at most $f(k)$ hitting all subgraphs isomorphic to a graph in $\mathcal{C}$.
We now know that several variations of cycles have this property: long cycles~\cite{BirmeleBR2007, BruhnJS2017, FioriniH2014, MoussetNSW2016, RobertsonS1986}, directed cycles~\cite{ReedRST1996, KakimuraK2012}, cycles with modularity constraints~\cite{HuyneJW2017, Thomassen1988}, holes~\cite{KK2018}, $S$-cycles~\cite{BruhnJS2017, KakimuraKM2011, PontecorviW2012}, and $(S_1, S_2)$-cycles~\cite{HuyneJW2017}. 
Sometimes, a variation of cycles does not have the Erd\H{o}s-P\'osa property. For instance, odd cycles do not have the Erd\H{o}s-P\'osa property~\cite{Reed1999}, 
and $(S_1, S_2, S_3)$-cycles do not have the Erd\H{o}s-P\'osa property~\cite{HuyneJW2017}. 
We refer to a recent survey of Erd\H{o}s-P\'osa property by Raymond and Thilikos~\cite{RaymondT16}.

For small values of $k$, we may ask to find the least possible value of $f(k)$.
For ordinary cycles, Bollob\'as (unpublished) first showed that when a graph has no two vertex-disjoint cycles, there are at most three vertices hitting all cycles, which gives a tight bound.
The complete graph $K_5$ has no two vertex-disjoint cycles but we need to take at least three vertices to hit all cycles.
Lov\'asz~\cite{Lovasz1965} characterized graphs without two vertex-disjoint cycles, which easily deduces that 
such graphs have at most three vertices hitting all cycles. 
Voss~\cite{Voss1969} showed that when a graph has no three vertex-disjoint cycles, there are at most six vertices hitting all cycles, which gives a tight bound.

For cycles of length at least $\ell$, 
Birmel{\'e}, Bondy, and Reed~\cite{BirmeleBR2007} conjectured that if a graph has no two vertex-disjoint cycles of length at least $\ell$, then there is a vertex set of size at most $\ell$ hitting all cycles of length at least $\ell$.
The complete graph $K_{2\ell-1}$ shows that this bound is tight if the conjecture is true. 
Birmel{\'e}~\cite{Bermele2003} confirmed that this conjecture is true for $\ell=4, 5$, but it remains open  for $\ell\ge 6$.

In this paper, we determine the tight possible value $f(k)$ for $S$-cycles, when $k=2$.
A pair $(G,S)$ of a graph $G$ and its vertex set $S$ is called a \emph{rooted graph}.
For a rooted graph $(G, S)$, a cycle of $G$ that contains a vertex of $S$ is called an \emph{$S$-cycle}. 
A vertex set $T$ of $G$ is called an \emph{$S$-cycle hitting set} if $T$ meets all the $S$-cycles of $G$.
We denote by $\mu(G, S)$ the maximum number of vertex-disjoint $S$-cycles in $(G, S)$, and 
denote by $\tau(G, S)$ the minimum size of an $S$-cycle hitting set in $(G, S)$.

As we listed above, it is known that $S$-cycles~\cite{KakimuraKM2011, PontecorviW2012, BruhnJS2017} have the Erd\H{o}s-P\'osa property.
$S$-cycles have an important role in studying the \textsc{Subset Feedback Vertex Set} problem~\cite{EvenNZ2000, CyganPPW2013}, which asks whether a given graph has at most $k$ vertices hitting all $S$-cycles.

Compared to the ordinary cycles, we find an example $(G, S)$ such that $\mu(G, S)= 1$ but $\tau(G, S)=4$.
This example is illustrated in Figure~\ref{fig:example}, and we devote Section~\ref{sec:lower} to prove it.

\begin{theorem}\label{thm:main2}
There is a rooted graph $(G, S)$ on $21$ vertices such that $\mu(G, S)=1$ and $\tau(G, S)\ge 4$.
\end{theorem}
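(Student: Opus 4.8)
The plan is to read the rooted graph $(G,S)$ off Figure~\ref{fig:example} and then verify the two numerical claims separately. Before anything else I would record the trivial bound $\tau(G,S)\le \abs{S}$, which holds because deleting every vertex of $S$ destroys all $S$-cycles; this already forces the design to use $\abs{S}\ge 4$ and confirms that the target $\tau(G,S)\ge 4$ is the best one could hope for with the chosen root set. I would then organise the whole analysis around the symmetry of $G$: I expect the construction to carry a large automorphism group, acting transitively on a family of interchangeable ``petal'' gadgets glued to a common connecting structure, and this symmetry is the only device that will make the later case analysis finite.

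For the upper bound $\mu(G,S)=1$ I would argue by contradiction, assuming vertex-disjoint $S$-cycles $C_1,C_2$. The delicate point is that this direction \emph{cannot} be proved by locating a tiny vertex set through which all $S$-cycles pass: such a set would immediately bound $\tau(G,S)$ and contradict the other half of the theorem. Instead I would use a global counting/connectivity argument, tracking how $C_1$ and $C_2$ enter and leave the gadgets and how many vertices of $S$ and of the connecting structure each must consume, and show that $G$ simply does not contain enough vertex-disjoint routing to realise two $S$-cycles simultaneously. This built-in tension — the $S$-cycles must pairwise intersect and yet be spread out enough to survive the deletion of any three vertices — is presumably exactly what forces the example to be as large as $21$ vertices.

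The heart of the proof, and the step I expect to be the main obstacle, is the lower bound $\tau(G,S)\ge 4$: I must show that \emph{no} three vertices meet all $S$-cycles. Suppose for contradiction that $T=\{a,b,c\}$ is an $S$-cycle hitting set. Checking all $\binom{21}{3}$ triples directly is hopeless, so I would reduce $T$ to a short list of orbit representatives under the automorphism group, classified by how $a,b,c$ distribute among the connecting structure and the petals; for each representative I would then exhibit one concrete $S$-cycle disjoint from $T$, exploiting the redundancy of the petals (each should offer at least two internally disjoint routes through its root, so that three deletions cannot simultaneously block enough of them). Conceptually this is a blocking-set statement — the vertex sets of a well-chosen subfamily of $S$-cycles ought to behave like the lines of a structure whose minimum blocking set has size $4$, in the spirit of a projective plane of order $3$ — and the verification amounts to certifying that no three vertices form such a blocking set. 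The real difficulty lies entirely in the completeness and bookkeeping of this case analysis, which the symmetry reduction is designed to keep under control.
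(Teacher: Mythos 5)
There is a genuine gap here: your text is a plan for a proof, not a proof. The statement is an existence claim, so the burden is (i) to exhibit a concrete $21$-vertex rooted graph $(G,S)$ and (ii) to verify the two properties on that graph; you do neither. You defer the construction entirely to Figure~\ref{fig:example} without specifying the vertex set, the edges, or $S$, and then both verification steps are described only in the conditional (``I would use a global counting/connectivity argument'', ``I would then exhibit one concrete $S$-cycle disjoint from $T$'') with no argument actually carried out. Nothing in the proposal could be checked for correctness, because no cycle, no edge, and no case is ever named.

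Beyond the missing execution, several of your structural guesses do not match the construction the paper uses, which matters because they would steer the case analysis in the wrong direction. The example is a triangle on hubs $v_1,v_2,v_3$ with each side doubled, three ``corner'' gadgets $\{a_i\},\{b_i\},\{c_i\}$ of crossing chords, and six degree-$2$ root vertices; it is not petal-transitive, and the paper only exploits a partial symmetry (swapping the $a$- and $b$-corners after certain deletions), not a large automorphism group acting transitively. The proof of $\mu(G,S)=1$ in the paper is not a global counting argument but a forcing argument: one tracks which of the two parallel routes a cycle through $x_1$ must use at each corner, deduces that the second cycle is confined to a specific induced subgraph and must pass through $v_3$, and concludes that the first cycle is then disconnected. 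The proof of $\tau(G,S)\ge 4$ first reduces to $T\cap S=\emptyset$ via the degree-$2$ roots, forces a corner vertex (say $a_1$) into $T$ using one explicit long $S$-cycle, and then runs a case analysis on how the remaining two vertices of $T$ distribute over the three corners, exhibiting a surviving $S$-cycle in each case. The ``blocking set of a projective plane of order $3$'' analogy is not reflected in the construction. To turn your proposal into a proof you would need to write down the graph and carry out both verifications explicitly; as it stands, the entire mathematical content of the theorem is absent.
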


On the other hand, we show that four vertices are always enough to hit all $S$-cycles whenever $\mu(G, S)\le 1$.
So, we determine the tight bound for the $S$-cycles.

\begin{theorem}\label{thm:main1}
	Let $(G, S)$ be a rooted graph. If $\mu(G, S)\le 1$, then $\tau(G, S)\le 4$.
\end{theorem}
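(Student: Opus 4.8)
The plan is to induct on $\abs{V(G)}$, first clearing away trivial cases and reducing to the $2$-connected case, where the real work lies.

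Since every $S$-cycle contains a vertex of $S$, the set $S$ is itself an $S$-cycle hitting set, so $\tau(G,S)\le\abs{S}$; hence we may assume $\abs{S}\ge 5$. If $G$ has no $S$-cycle then $\tau(G,S)=0$, so we may assume one exists. Deleting every vertex lying on no $S$-cycle changes neither $\mu(G,S)$ nor the family of $S$-cycles, so we may assume every vertex lies on an $S$-cycle; loops and parallel classes meeting $S$ are short $S$-cycles that are handled directly, since two of them at distinct vertices already force $\mu(G,S)\ge 2$. Now suppose $v$ is a cut vertex and write $G=G_1\cup G_2$ with $V(G_1)\cap V(G_2)=\{v\}$, where each $G_i$ has a vertex other than $v$. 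Every cycle of $G$ lies in $G_1$ or in $G_2$. If $S$-cycles occur on only one side, say $G_1$, they are exactly the $S$-cycles of the smaller graph $G_1$, and induction (using $\mu(G_1,S\cap V(G_1))\le 1$) gives a hitting set of size at most $4$. If both sides carry an $S$-cycle, then any $S$-cycle of $G_1$ avoiding $v$ would be disjoint from any $S$-cycle of $G_2$, contradicting $\mu(G,S)\le 1$; hence every $S$-cycle of $G_1$, and symmetrically of $G_2$, passes through $v$, so $\{v\}$ hits all $S$-cycles and $\tau(G,S)\le 1$. Thus we may assume $G$ is $2$-connected.

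For the $2$-connected core, fix a shortest $S$-cycle $C$. Because $\mu(G,S)\le 1$, the graph $G-V(C)$ contains no $S$-cycle; in particular every vertex of $S\setminus V(C)$ sits inside a $C$-bridge and reaches $C$ only through its attachment vertices. Any $S$-cycle other than $C$ therefore either reroutes a subpath of $C$ through a bridge or is assembled from $S$-paths attaching to $C$, and each such cycle meets $V(C)$. The strategy is to show that if no four vertices hit all $S$-cycles, then one can select two such $S$-cycles and, using $2$-connectivity together with Menger's theorem to reroute their shared portions off a bounded separator, split them into two vertex-disjoint $S$-cycles, contradicting $\mu(G,S)\le 1$. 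Concretely, I would record for each $S$-cycle its trace on $C$ together with the vertices of $S$ it activates, and argue that the absence of a $4$-vertex hitting set yields either five pairwise ``independent'' attachment regions along $C$ or a robust pair of crossing $S$-paths, either of which completes into two disjoint $S$-cycles.

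The main obstacle is exactly this $2$-connected structural step, where the analysis splits into several configurations in the spirit of Lov\'asz's characterization of graphs without two disjoint cycles. The delicate point is to certify that four vertices always suffice while a fifth can be forced to be unnecessary, matching the tight example of Theorem~\ref{thm:main2}. I expect the bound $4=3+1$ to surface as a Bollob\'as-type bound of $3$ on an essentially ordinary-cycle core, plus one extra vertex needed to control how the vertices of $S$ link into that core. Keeping the reroutings within $S$-cycles, and organizing the case analysis so that no configuration leaks a fifth vertex, will be the crux of the argument.
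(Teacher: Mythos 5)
There is a genuine gap: everything after your reduction to the $2$-connected case is a statement of intent rather than an argument. The reductions themselves (bounding $\tau$ by $\abs{S}$, discarding vertices on no $S$-cycle, splitting at a cut vertex) are correct and unobjectionable, but they only dispose of the trivial cases. The entire content of the theorem lives in the $2$-connected core, and there you write ``the strategy is to show\dots'', ``I would record\dots and argue\dots'', ``I expect the bound $4=3+1$ to surface\dots'', and finally concede that organizing the case analysis ``will be the crux of the argument.'' No configuration is actually analyzed, no hitting set of size $4$ is ever exhibited, and no mechanism is given for producing two disjoint $S$-cycles from the failure of a $4$-vertex hitting set. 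As written, the proposal proves nothing beyond the reductions.

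For comparison, the paper's proof does not work with a shortest $S$-cycle and its bridges at all. It iteratively grows an \emph{$S$-cycle subgraph}: starting from a single $S$-cycle, Lemma~\ref{lem:intersection2} shows that whenever the current subgraph $W$ admits an $S$-cycle hitting set of size at most $4$ inside $S\cap V(W)$, any surviving $S$-cycle yields a $W$-extension, producing a strictly richer $S$-cycle $H$-subdivision. This drives $G$ through a chain of subdivisions ($K_3^+$ or $\theta_3$, then $K_4$ or $K_3^{+++}$, then $K_4^{++}$, $K_4^{+++}$, $W_4$, or $K_{3,3}^+$, then $W_4^+$, $W_4^*$, $W_5$, or $K_{3,3}^+$), and for each terminal configuration a hitting set of size at most $4$ is constructed explicitly and verified by a lengthy case analysis (Propositions~\ref{prop:k3triple}, \ref{prop:k4++}, \ref{prop:k4+++}, \ref{prop:w4+}, \ref{prop:w4*}, \ref{prop:w5}, \ref{prop:k33+subdivision}). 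Your heuristic that the bound should decompose as ``$3$ for an ordinary-cycle core plus $1$ for the $S$-linkage'' does not reflect how the bound actually arises in these configurations (indeed, several terminal cases give $\tau\le 3$ or even $\tau\le 2$, and the hitting sets are typically sets of branching vertices plus at most one gate vertex of $S$). To turn your outline into a proof you would have to supply the full structural classification that the paper spends four sections establishing.
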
	

We explain a strategy for Theorem~\ref{thm:main1}.
We will say that a subgraph $H$ is an \emph{$S$-cycle subgraph}, if every cycle of $H$ is an $S$-cycle.
Suppose that $\mu(G, S)\le 1$ and $\tau(G, S)> 4$, and we will obtain a contradiction.
As $\tau(G, S)>4$, $G$ has an $S$-cycle which is an $S$-cycle subgraph.
Starting from this $S$-cycle, we recursively find a larger $H$-subdivision for some $H$, which is an $S$-cycle subgraph.

Assume that $W$ is an $S$-cycle $H$-subdivision for some $H$.
To find a larger $S$-cycle subgraph, we want to find a $W$-path $X$ where the union of $W$ and $X$ is again an $S$-cycle subgraph.
Clearly, not all $W$-paths can be added. 
A way to guarantee the existence of such a $W$-path is the following. Let $T$ be a subset of $S\cap V(W)$ which is an $S$-cycle hitting set of $W$.
Then we show that if $G$ still has an $S$-cycle $C$ that does not meet $T$, then either $C$ attaches to $W$ on one vertex, or $C$ contains a $W$-path $X$ such that 
the union of $W$ and $X$ is again an $S$-cycle subgraph (Lemma~\ref{lem:intersection2}). So, if such a small hitting set $T$ exists, then we can easily find a larger $S$-cycle subgraph.
We simply apply this argument whenever the current subgraph $W$ admits an $S$-cycle hitting set of size at most $4$ that is contained in $S$.

By this approach, we end up with some structures $W$ where 
we cannot simply guarantee the existence of an $S$-cycle hitting set of size at most $4$ contained in $S$.
For those subgraphs, we will analyze their structures and show that the existence of such a structure would imply that 
$G$ has an $S$-cycle hitting set of size at most $4$. This will complete the proof.

The paper is organized as follows.
We introduce basic notions  in Section~\ref{sec:prelim}.
In Section~\ref{sec:lower}, we prove Theorem~\ref{thm:main2}.
In Section~\ref{sec:overview}, we give a detailed overview of Theorem~\ref{thm:main1} 
with introducing additional notions and basic lemmas.
See Subsection~\ref{subsec:specialgraph} and Figure~\ref{fig:specialgraph} for definitions of special graphs.
\begin{itemize}
	\item In Section~\ref{sec:k4subdiv}, 
	 we prove that either $G$ contains an $S$-cycle $K_4$-subdivision or it has an $S$-cycle hitting set of size at most $4$.
	 \item In Section~\ref{sec:w4ork33}, 
	 we prove that either $G$ contains an $S$-cycle $W_4$-subdivision or it has an $S$-cycle $K_{3,3}^+$-subdivision or it has an $S$-cycle hitting set of size at most $4$.	 
	 \item In Section~\ref{sec:varw4},
	 we prove that when $G$ contains an $S$-cycle $W_4$-subdivision, 
	 either $G$ has an $S$-cycle $K_{3,3}^+$-subdivision or it has an $S$-cycle hitting set of size at most $4$.
	 \item In Section~\ref{sec:k33final},
	 we prove that when $G$ contains an $S$-cycle $K_{3,3}^+$-subdivision,
	 $G$ has an $S$-cycle hitting set of size at most $4$. 
\end{itemize}

\section{Preliminaries}\label{sec:prelim}

For a graph $G$, we denote by $V(G)$ and $E(G)$ the vertex set and the edge set of $G$, respectively.
Let $G$ be a graph. 
For a vertex set $S$ of $G$, let $G[S]$ denote the subgraph of $G$ induced by $S$, and 
let $G-S$ denote the subgraph of $G$ obtained by removing all the vertices in $S$.
For $v\in V(G)$, let $G-v:=G-\{v\}$.
Similarly, for an edge set $F$ of $G$, let $G-F$ denote the subgraph of $G$ obtained by removing all the edges in $F$, 
and for $e\in E(G)$, let $G-e:=G-\{e\}$.
If two vertices $u$ and $v$ are adjacent in $G$, then we say that $u$ is a \emph{neighbor} of $v$. 
The set of neighbors of a vertex $v$ is denoted by $N_G(v)$, and the \emph{degree} of $v$ is defined as the size of $N_G(v)$.
For two vertices $v$ and $w$ in $G$, we denote by $\dist_G(v,w)$ the distance between $v$ and $w$ in $G$; that is, the length of a shortest path from $v$ to $w$ in $G$.
Two subgraphs $H$ and $F$ of $G$ are \emph{vertex-disjoint}, or \emph{disjoint} for short, if $V(H)\cap V(F)=\emptyset$. 

For two graphs $G$ and $H$, $G\cup H$ denotes the graph $(V(G)\cup V(H), E(G)\cup E(H))$.

A \emph{subdivision of $H$} (\emph{$H$-subdivision} for short) is a graph obtained from $H$ by subdividing some of its edges.
For an $H$-subdivision $W$, the vertices of $H$ in $W$ are called the \emph{branching vertices} of $W$, 
and a path between two branching vertices that contains no other branching vertex is called a \emph{certifying path} of $W$.

\subsection{Rooted graphs}
A pair $(G, S)$ of a graph $G$ and its vertex subset $S$ is called a \emph{rooted graph}.
An \emph{$S$-cycle subgraph} of a rooted graph $(G, S)$ is a subgraph whose every cycle is an $S$-cycle.
In particular, an \emph{$S$-cycle $H$-subdivision} is an $H$-subdivision whose every cycle is an $S$-cycle.

For a subgraph $H$ of $G$, a path $P$ with at least one edge is called an \emph{$H$-path} if its endpoints are contained in $H$ but all the other vertices are not in $H$, and it is not an edge of $H$.
For two subgraphs $F_1$ and $F_2$ of $H$, an $H$-path is called an \emph{$(H,F_1, F_2)$-path} if its one endpoint is contained in $F_1$ and the other endpoint is contained in $F_2$.

Given an $S$-cycle subgraph $W$, we say that a $W$-path $X$ is a \emph{$W$-extension} if 
$W\cup X$ is again an $S$-cycle subgraph.

\subsection{Special graphs}\label{subsec:specialgraph}

\begin{figure}
 \tikzstyle{v}=[circle, draw, solid, fill=black, inner sep=0pt, minimum width=3pt]
 \tikzstyle{w}=[rectangle, draw, solid, fill=black, inner sep=0pt, minimum width=5pt, minimum height=5pt]
  \centering
   \begin{tikzpicture}[scale=0.6]

        \node [v]  (z1) at (0, 0){};
        \node [v]  (z2) at (2, 0){};
        \node [v]  (z3) at (1, 1.7){};
        
       \draw(z1)--(z2)--(z3)--(z1);
      \draw(z1) [in=180,out=120] to (z3);
      \draw(z1) [in=200,out=90] to (z3);
       \draw(z2) [in=0,out=60] to (z3);
      \draw(z1) [in=-120,out=-60] to (z2);
      \draw (1,-1) node [below] {$K_3^{+++}$};

        \node [v]  (v1) at (0-4, 0){};
        \node [v]  (v2) at (2-4, 0){};
        \node [v]  (v3) at (1-4, 1.7){};
        
       \draw(v1)--(v2)--(v3)--(v1);
      \draw(v1) [in=180,out=120] to (v3);
       \draw(v2) [in=0,out=60] to (v3);
      \draw(v1) [in=-120,out=-60] to (v2);
      \draw (1-4,-1) node [below] {$K_3^{++}$}; 
      
        \node [v]  (w1) at (0-4-4, 0){};
        \node [v]  (w2) at (2-4-4, 0){};
        \node [v]  (w3) at (1-4-4, 1.7){};
        
       \draw(w1)--(w2)--(w3)--(w1);
      \draw(w1) [in=180,out=120] to (w3);
       \draw(w2) [in=0,out=60] to (w3);
       \draw (1-4-4,-1) node [below] {$K_3^+$}; 

         \node [v]  (a1) at (0-8, 0-10){};
        \node [v]  (a2) at (2-8, 0-10){};
        \node [v]  (a3) at (2-8, 2-10){};
        \node [v]  (a4) at (0-8, 2-10){};
            \node [v]  (a5) at (1-8, 1-10){};
    	\draw(a1)--(a2)--(a3)--(a4)--(a1);
	\draw(a5)--(a1);    
	\draw(a5)--(a2);    
 	\draw(a5)--(a3);    
 	\draw(a5)--(a4);    
         \draw (1-8,-1-10) node [below] {$W_4^+$}; 
            \draw(a5) [in=-22,out=110] to (a4);
    
        \node [v]  (b1) at (0, 0-10){};
        \node [v]  (b2) at (0, 1-10){};
        \node [v]  (b3) at (0, 2-10){};
        \node [v]  (b4) at (2, 0-10){};
        \node [v]  (b5) at (2, 1-10){};
        \node [v]  (b6) at (2, 2-10){};
        
        \draw(b1)--(b4);
        \draw(b1)--(b5);
        \draw(b1)--(b6);
        \draw(b2)--(b4);
        \draw(b2)--(b5);
        \draw(b2)--(b6);
        \draw(b3)--(b4);
        \draw(b3)--(b5);
        \draw(b3)--(b6);
        \draw(b2)--(b3);
        \draw (1,-1-10) node [below] {$K_{3, 3}^+$};

		\node [v]  (c1) at (0, 0-5){};
        \node [v]  (c2) at (2, 0-5){};
        \node [v]  (c3) at (1, 1.7-5){};
        \node [v]  (c4) at (1, 0.8-5){};
	
		\draw(c1)--(c2)--(c3)--(c1);
		\draw(c4)--(c1);        
		\draw(c4)--(c2);        
		\draw(c4)--(c3);        
        \draw (1,-1-5) node [below] {$K_4^{+++}$}; 
          \draw(c1) [in=180,out=120] to (c3);
          \draw(c1) [in=200,out=90] to (c3);

\node [v]  (c1) at (0-4, 0-5){};
        \node [v]  (c2) at (2-4, 0-5){};
        \node [v]  (c3) at (1-4, 1.7-5){};
        \node [v]  (c4) at (1-4, 0.8-5){};
	
		\draw(c1)--(c2)--(c3)--(c1);
		\draw(c4)--(c1);        
		\draw(c4)--(c2);        
		\draw(c4)--(c3);        
        \draw (1-4,-1-5) node [below] {$K_4^{++}$}; 
          \draw(c1) [in=180,out=120] to (c3);
		\draw(c2) [in=0,out=60] to (c3);

        \node [v]  (c1) at (0-8, 0-5){};
        \node [v]  (c2) at (2-8, 0-5){};
        \node [v]  (c3) at (1-8, 1.7-5){};
        \node [v]  (c4) at (1-8, 0.8-5){};
	
		\draw(c1)--(c2)--(c3)--(c1);
		\draw(c4)--(c1);        
		\draw(c4)--(c2);        
		\draw(c4)--(c3);        
        \draw (1-8,-1-5) node [below] {$K_4^+$}; 
          \draw(c1) [in=180,out=120] to (c3);

        \node [v]  (b1) at (0-4, 0-10){};
        \node [v]  (b2) at (2-4, 0-10){};
        \node [v]  (b3) at (2-4, 2-10){};
        \node [v]  (b4) at (0-4, 2-10){};
        \node [v]  (b5) at (1-4, 1-10){};
        
        \draw(b1)--(b2)--(b3)--(b4)--(b1);
        \draw(b5)--(b1);
        \draw(b5)--(b2);
        \draw(b5)--(b3);
        \draw(b5)--(b4);
         \draw (1-4,-1-10) node [below] {$W_4^*$}; 
       \draw(b2) [in=-22,out=110] to (b4);
    
\end{tikzpicture}
   \caption{Graphs that appear in the proof.}
  \label{fig:specialgraph}
\end{figure}
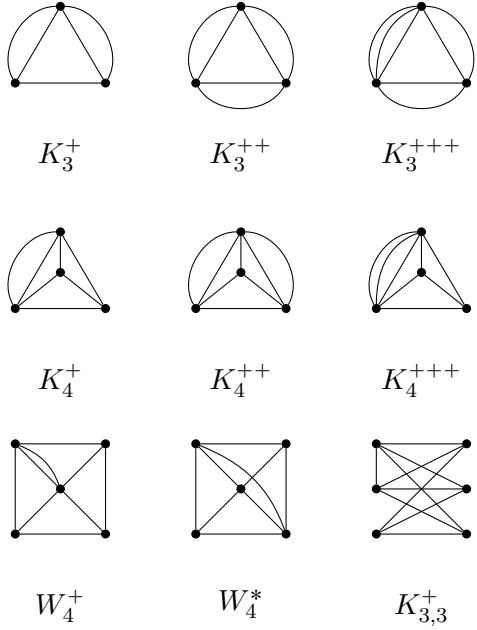

Let $m$ and $n$ be positive integers.
Let $K_n$ be the complete graph on $n$ vertices, 
and let $K_{m,n}$ be the complete bipartite graph where one part has size $m$ and the other part has size $n$.
Let $\theta_n$ denote the graph consisting of two vertices with $n$ multiple edges between them.
For $n\ge 3$, let $W_n$ denote the graph obtained from a cycle on $n$ vertices by adding a vertex adjacent to all the vertices of the cycle.

We define special graphs illustrated in Figure~\ref{fig:specialgraph}.
\begin{itemize}
\item Let $K_3^{+}$ denote the graph obtained from $K_3$ by adding a multiple edge to each of two distinct edges of $K_3$.
Let $K_3^{++}$ denote the graph obtained from $K_3$ by adding a multiple edge to each edge of $K_3$.
Let $K_3^{+++}$ denote the graph obtained from $K_3$ by adding a multiple edge to each edge of $K_3$ and then adding one more multiple edge to an edge of the resulting graph.
\item Let $K_4^{+}$ denote the graph obtained from $K_4$ by adding a multiple edge to an edge of $K_4$.
Let $K_4^{++}$ denote the graph obtained from $K_4$ by adding a multiple edge to each of two incident edges of $K_4$.
Let $K_4^{+++}$ denote the graph obtained from $K_4$ by adding two multiple edges to an edge of $K_4$.
\item Let $W_4^{+}$ denote the graph obtained from $W_4$ by adding an edge between a vertex of degree $4$ and a vertex of degree $3$.
 Let $W_4^{*}$ denote the graph obtained from $W_4$ by adding an edge between two vertices of distance $2$.
\item Let $K_{3,3}^+$ denote the graph obtained from $K_{3,3}$ by adding an edge between two vertices in the same part.
\end{itemize}

\section{An example showing that three vertices are not sufficient}\label{sec:lower}

In this section, we prove Theorem~\ref{thm:main2}.

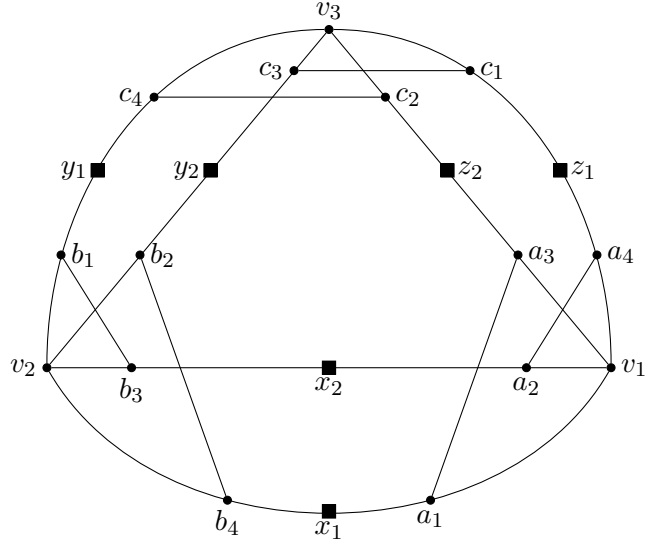
\begin{figure}
 \tikzstyle{v}=[circle, draw, solid, fill=black, inner sep=0pt, minimum width=3pt]
 \tikzstyle{w}=[rectangle, draw, solid, fill=black, inner sep=0pt, minimum width=5pt, minimum height=5pt]
  \centering
   \begin{tikzpicture}[scale=0.75]
      
        \node [v]  (v1) at (-5, 0){};
        \node [v]  (v2) at (0, 6){};
        \node [v]  (v3) at (5, 0){};
      
       \draw(v1)--(v2)--(v3)--(v1);
        \draw (v1) node [left] {$v_2$}; 
         \draw (v2) node [above] {$v_3$}; 
         \draw (v3) node [right] {$v_1$};

      \draw(v1) [in=180,out=90] to (v2);
      \draw(v3) [in=0,out=90] to (v2);
      \draw(v1) [in=-120,out=-60] to (v3);
      
        \node [v]  (a1) at (4.75, 2){};
        \node [v]  (b1) at (-4.75, 2){};
        \node [v]  (a2) at (3.35, 2){};
        \node [v]  (b2) at (-3.35, 2){};
        \node [v]  (a3) at (3.5, 0){};
        \node [v]  (b3) at (-3.5, 0){};
        \node [v]  (a4) at (1.8, -2.35){};
        \node [v]  (b4) at (-1.8, -2.35){};

        \draw (a1) node [right] {$a_4$}; 
        \draw (a2) node [right] {$a_3$}; 
        \draw (a3) node [below] {$a_2$}; 
        \draw (a4) node [below] {$a_1$}; 
        \draw (b1) node [right] {$b_1$}; 
        \draw (b2) node [right] {$b_2$}; 
        \draw (b3) node [below] {$b_3$}; 
        \draw (b4) node [below] {$b_4$}; 
      
      \draw(a1)--(a3); \draw(a2)--(a4);
      \draw(b1)--(b3); \draw(b2)--(b4);
      
       \node [v]  (c1) at (2.5, 5.27){};
       \node [v]  (d1) at (-3.1, 4.8){};
       \node [v]  (c2) at (1, 4.8){};
       \node [v]  (d2) at (-0.62, 5.27){};
       
         \draw (c1) node [right] {$c_1$}; 
        \draw (c2) node [right] {$c_2$}; 
        \draw (d1) node [left] {$c_4$}; 
        \draw (d2) node [left] {$c_3$}; 
      
       \draw(c1)--(d2);
       \draw(c2)--(d1);

       \node [w]  (e1) at (4.1, 3.5){};
       \node [w]  (f1) at (-4.1, 3.5){};
       \node [w]  (e2) at (2.1, 3.5){};
       \node [w]  (f2) at (-2.1, 3.5){};
       \node [w]  (e3) at (0, 0){};
       \node [w]  (f3) at (0, -2.55){};
      
       \draw (e1) node [right] {$z_1$}; 
        \draw (e2) node [right] {$z_2$}; 
        \draw (e3) node [below] {$x_2$}; 
        \draw (f1) node [left] {$y_1$}; 
        \draw (f2) node [left] {$y_2$}; 
        \draw (f3) node [below] {$x_1$};

\end{tikzpicture}
   \caption{A graph $G$ with $S=\{x_1, x_2, y_1, y_2, z_1, z_2\}$ where $G$ has no two vertex-disjoint $S$-cycles, but it has no $S$-cycle hitting set of size at most $3$.}
  \label{fig:example}
\end{figure}

\begin{thm:main2}
There is a rooted graph $(G, S)$ on $21$ vertices such that $\mu(G, S)=1$ and $\tau(G, S)\ge 4$.
\end{thm:main2}
\begin{proof}
	We define a graph $G$ with a vertex set $S=\{x_1, x_2, y_1, y_2, z_1, z_2\}$ as illustrated in Figure~\ref{fig:example}.
	
	We first show that $G$ has no two vertex-disjoint $S$-cycles.
	Suppose that $G$ has two vertex-disjoint $S$-cycles  $C_1$ and $C_2$.
	By symmetry, we may assume that $C_1$ contains $x_1$.
	Note that $C_1$ contains one of $b_4b_2$ and $b_4v_2$, 
	and similarly, it contains one of $a_1a_3$ and $a_1v_1$.
	
	In each case, we can observe that $C_2$ contains neither $y_2$ nor $z_2$. As $C_2$ contains $y_1$ or $z_1$, 
	we can see that $C_2$ contains a path from $c_4$ to $c_1$ in 
	$G[\{c_4, y_1, b_1, b_3, v_2, x_2, a_2, a_4, v_1, z_1, c_1\}]$. Then $C_2$ has to contain $v_3$ as well.  
	It implies that $C_1-x_1$ cannot connect the part $\{b_4, v_2, b_2, y_2, c_3\}$ and the part $\{a_1, v_1, a_3, z_2, c_2\}$, a contradiction.
		It shows that $G$ has no two vertex-disjoint $S$-cycles.

		Now, we prove that $G$ has no vertex set of size at most $3$ hitting all $S$-cycles.
	Suppose $T$ is a vertex set of size at most $3$ hitting all $S$-cycles such that $\abs{T}$ is minimum.
	As each vertex in $\{x_1, x_2, y_1, y_2, z_1, z_2\}$ has degree $2$, 
	we may assume that $T\cap \{x_1, x_2, y_1, y_2, z_1, z_2\}=\emptyset$.
	
	If $T$ contains no vertex in $\{a_i, b_i, c_i:1\le i\le 4\}$, 
	then $T$ does not meet the $S$-cycle 
	\[ a_1x_1b_4b_2y_2c_3c_1z_1a_4a_2x_2b_3b_1y_1c_4c_2z_2a_3a_1. \]
	So, $T$ contains a vertex in $\{a_i, b_i, c_i:1\le i\le 4\}$,
	and by symmetry, we may assume that $a_1\in T$.
 	Observe that any $S$-cycle in $G-a_1$ does not contain $x_1$.
	Furthermore, in $G-\{a_1, x_1\}$, $b_4$ has degree $2$, and its neighbors are adjacent.
	So, any $S$-cycle containing $b_4$ can be shorten using the edge $b_2v_2$.
	By the minimality of $T$, we have that $T\setminus \{a_1\}\subseteq V(G)\setminus \{a_1, x_1, b_4\}$
	and $T\setminus \{a_1\}$ intersects all $S$-cycles in $G-\{a_1, x_1, b_4\}$. 

	Let $A=\{v_1, a_2, a_3, a_4\}$, $B=\{v_2, b_1, b_2, b_3\}$, and $C=\{v_3, c_1, c_2, c_3, c_4\}$.
	We claim that $T\setminus \{a_1\}$ is not contained in any of $A, B$, and $C$. 
	If $T\setminus \{a_1\}\subseteq A$, then $T$ does not meet $v_2b_2y_2c_3v_3c_4y_1b_1v_2$, 
	and similarly, if $T\setminus \{a_1\}\subseteq B$, then  $T$ does not meet $v_3c_1z_1a_4v_1a_3z_2c_2v_3$.
	Assume that $T\setminus \{a_1\}\subseteq C$. 
    If $T\setminus \{a_1\}\subseteq \{c_1, c_2\}$, then 
    $T$ does not meet $v_2b_2y_2c_3v_3c_4y_1b_1v_2$.
    Also, if $T\setminus \{a_1\}\subseteq \{c_3, c_4\}$, then 
    $T$ does not meet $v_3c_1z_1a_4v_1a_3z_2c_2v_3$.
    So, we may assume that $T\setminus \{a_1\}$ is contained in neither $\{c_1, c_2\}$ nor $\{c_3, c_4\}$.
	Then there is a path from $\{y_1, y_2\}$ to $\{z_1, z_2\}$
	in $(G-T)[\{y_1, y_2, z_1, z_2\}\cup C]$.
	Since there is a path from $y_i$ to $z_j$ for any pair of $i,j\in \{1,2\}$ in $(G-T)[A\cup B\cup \{x_2, y_1, y_2, z_1, z_2\}]$,
	we can find an $S$-cycle avoiding $T\setminus \{a_1\}$, which is a contradiction.
	This shows that $T\setminus \{a_1\}$ is not contained in any of $A, B$, and $C$, 
	and it implies that $T\setminus \{a_1\}$ consists of two vertices from distinct sets of $A, B$, and $C$. 
	 
	 As $A$ and $B$ are symmetric in $G-\{a_1, x_1, b_4\}$, 
	 we may assume that 
	 either 
	 \begin{itemize}
		\item $\abs{T\cap A}=1$ and $\abs{T\cap B}=1$, or
	 	\item $\abs{T\cap A}=1$ and $\abs{T\cap C}=1$.
	 \end{itemize}
	 We divide into those cases, and for each case, we show that $G-T$ contains an $S$-cycle, leading a contradiction.
	 
	 \begin{itemize}
	 	\item (Case 1. $\abs{T\cap A}=1$ and $\abs{T\cap B}=1$.)\\
		We show that there is a path from $\{y_1, y_2\}$ to $\{z_1, z_2\}$ in $G[A\cup B\cup \{x_2, y_1, y_2, z_1, z_2\}]$ avoiding $T$. 
		If $T\cap A=\{a_2\}$, then $v_3c_1z_1a_4v_1a_3z_2c_2v_3$ is still an $S$-cycle in $G-T$.
		So, $a_2$ is not in $T$, and 
		there is a path from $x_2$ to $\{z_1, z_2\}$ in $G[A\cup \{x_2, z_1, z_2\}]$ avoiding $T$.
		Similarly,
		$b_3$ is not in $T$,  and
		there is a path from $x_2$ to $\{y_1, y_2\}$ in $G[B\cup \{x_2, y_1, y_2\}]$ avoiding $T$.
		These imply that 
		there is a path from $\{y_1, y_2\}$ to $\{z_1, z_2\}$ in $G[A\cup B\cup \{x_2, y_1, y_2, z_1, z_2\}]$ avoiding $T$, 
		and we can find an $S$-cycle by connecting through $C$.
		\item (Case 2. $\abs{T\cap A}=1$ and $\abs{T\cap C}=1$.) \\
	 	We show that there is a path from $x_2$ to $\{y_1, y_2\}$ in $G[A\cup C\cup \{x_2, y_1, y_2, z_1, z_2\}]$ avoiding $T$. 
		If the vertex of $T\cap C$ is $c_1$ or $c_2$, 
		then $T$ does not meet the $S$-cycle $v_2b_2y_2c_3v_3c_4y_1b_1v_2$.
		So, we may assume that $T\cap C\subseteq \{v_3, c_3, c_4\}$.
		
		First assume that $T\cap C=\{v_3\}$.
		If $T\cap A=\{a_2\}$, then 
		\[v_2b_2y_2c_3c_1z_1a_4v_1a_3z_2c_2c_4y_1b_1v_2\] is  a remaining $S$-cycle, a contradiction.
		So, the vertex of $T\cap A$ is contained in $\{v_1, a_3, a_4\}$.
		Then there is a path from $x_2$ to $\{z_1, z_2\}$ in $G[A\cup C\cup \{x_2, y_1, y_2, z_1, z_2\}]$ avoiding $T$, 
		and we can connect to $\{y_1, y_2\}$ along $c_1c_3y_2$ or $c_2c_4y_1$.
		
		Secondly, assume that $T\cap C=\{c_3\}$. 
		If $T\cap A=\{a_2\}$, then 
		\[v_3c_2z_2a_3v_1a_4z_1c_1v_3\] is  a remaining $S$-cycle, a contradiction.
		So, the vertex of $T\cap A$ is contained in $\{v_1, a_3, a_4\}$.
	Then there is a path from $x_2$ to $\{z_1, z_2\}$ in $G[A\cup C\cup \{x_2, y_1, y_2, z_1, z_2\}]$ avoiding $T$, 
		and we can connect to $y_1$ along $c_1v_3c_4y_1$ or $c_2c_4y_1$.
		
		Lastly, assume that $T\cap C=\{c_4\}$. 
		If $T\cap A=\{a_2\}$, then 
		\[v_3c_2z_2a_3v_1a_4z_1c_1v_3\] is  a remaining $S$-cycle, a contradiction.
		So, the vertex of $T\cap A$ is contained in $\{v_1, a_3, a_4\}$.
	Then there is a path from $x_2$ to $\{z_1, z_2\}$ in $G[A\cup C\cup \{x_2, y_1, y_2, z_1, z_2\}]$ avoiding $T$, 
				and we can connect to $y_2$ along $c_1c_3y_2$ or $c_2v_3c_3y_2$.
				
				Thus, we can find an $S$-cycle connecting through $B$.
			 \end{itemize}
	 We conclude that $G-T$ contains an $S$-cycle. It contradicts our assumption that $T$ is an $S$-cycle hitting set.
\end{proof}

\section{Basic lemmas for Theorem~\ref{thm:main1}}\label{sec:overview}

In this section, 
we introduce some necessary notions and prove basic lemmas regarding $S$-cycle $H$-subdivisions. 

\subsection{Paths with specified vertices}\label{sec:paths}

Let $(G, S)$ be a rooted graph and $P$ be a path with endpoints $v$ and $w$.
We define $P_{mid}$ as the shortest subpath of $P-\{v, w\}$ containing all the vertices of $(V(P)\setminus \{v, w\})\cap S$.
If $P-\{v, w\}$ contains no vertex of $S$, then it is defined to be the empty graph.
The endpoints of $P_{mid}$ will be called the \emph{gates} of $P$.
For an endpoint $z$ of $P$, the component of $P-V(P_{mid})$ containing $z$ is denoted by $P_z$.
See Figure~\ref{fig:pmid} for an illustration.

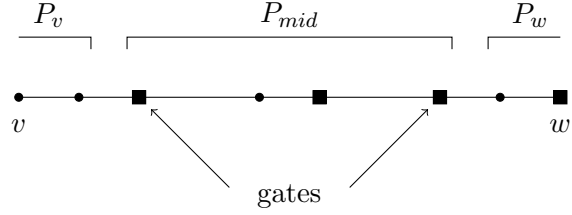
\begin{figure}
 \tikzstyle{v}=[circle, draw, solid, fill=black, inner sep=0pt, minimum width=3pt]
 \tikzstyle{w}=[rectangle, draw, solid, fill=black, inner sep=0pt, minimum width=5pt, minimum height=5pt]
  \centering
   \begin{tikzpicture}[scale=0.8]
      
      \draw(1, 0)--(10, 0);

        \node [v]  (v) at (1, 0){};
      	\node [w]  (v) at (10, 0){};

        \node [v]  (v1) at (2, 0){};
        \node [v]  (v2) at (5, 0){};
        \node [w]  (w1) at (3, 0){};
        \node [w]  (w2) at (6, 0){};
        \node [w]  (w3) at (8, 0){};
        \node [v]  (v4) at (9, 0){};
        
        \draw (2.8,0.8)--(2.8,1)--(8.2,1)--(8.2,0.8);
        \draw (2.2,0.8)--(2.2,1)--(1,1);
        \draw (8.8,0.8)--(8.8,1)--(10,1);
        
        \draw (5.5,1) node [above] {$P_{mid}$}; 
        \draw (1.5,1) node [above] {$P_v$}; 
        \draw (9.5,1) node [above] {$P_w$}; 
        \draw (1,-.2) node [below] {$v$}; 
        \draw (10,-.2) node [below] {$w$}; 

	    \draw (5.5,-2) node [above] {gates}; 
	    \draw[->] (4.5, -1.5)--(3.2,-.2);
		\draw[->] (6.5, -1.5)--(7.8,-.2);

\end{tikzpicture}
   \caption{A path $P$ in a rooted graph $(G,S)$. Rectangles depict vertices in $S$. }
  \label{fig:pmid}
\end{figure}

We frequently use the following lemma.

\begin{lemma}\label{lem:mid}
Let $(G, S)$ be a rooted graph with $\mu(G, S)\le 1$, and let $W$ be a subgraph of $G$.
Let $P$ be a path of $W$ whose all internal vertices have degree $2$ in $W$ such that 
$P_{mid}$ is non-empty, $W-V(P_{mid})$ contains an $S$-cycle, and $G$ has no $(W, P_{mid}, W-V(P_{mid}))$-path. Let $a$ and $b$ be the gates of $P$.

Then the following are satisfied.
\begin{enumerate}[(1)]
	\item $\{a,b\}$ separates $P_{mid}$ from $W-V(P_{mid})$ in $G$.
\item If $G$ has an $S$-cycle $C$ containing a vertex of $P_{mid}$,
then $C$ contains both $a, b$ and it also contains a vertex of $P_v$ for each endpoint $v$ of $P$. 
Furthermore, if $C$ does not contain an endpoint $v$ of $P$, then $C$ contains a $(W, P_v, W-V(P_v))$-path. 
\end{enumerate}
\end{lemma}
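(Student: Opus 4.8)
The plan is to prove the two parts separately. First I record two facts used throughout. Since $P_{mid}$ is the shortest subpath of $P-\{v,w\}$ covering $(V(P)\setminus\{v,w\})\cap S$, its endpoints are the extreme internal $S$-vertices, so the gates satisfy $a,b\in S$. Write $Y:=V(W)\setminus V(P_{mid})$, so that $V(W)=V(P_{mid})\cup Y$, and fix an $S$-cycle $C_0\subseteq W-V(P_{mid})$ guaranteed by hypothesis, so $V(C_0)\subseteq Y$ and $a,b\notin V(C_0)$. Let $a^-$ and $b^+$ be the neighbours on $P$ of $a$ and $b$ that lie outside $P_{mid}$ (when $a=b$, these are the two neighbours of the single gate); then $a^-\in V(P_v)$, $b^+\in V(P_w)$. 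Because every internal vertex of $P$ has degree $2$ in $W$, every interior vertex of $P_{mid}$ has both its $W$-neighbours inside $P_{mid}$, so $aa^-$ and $bb^+$ are \emph{exactly} the edges of $W$ joining $V(P_{mid})$ to $Y$.

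For (1), I would first note that the separation already holds inside $W$: the interior vertices of $P_{mid}$ have both $W$-neighbours in $P_{mid}$, so $W-\{a,b\}$ has no edge between $V(P_{mid})\setminus\{a,b\}$ and $Y$. To lift this to $G$, suppose some component of $G-\{a,b\}$ meets both $V(P_{mid})\setminus\{a,b\}$ and $Y$, and take a shortest path $R$ between these two sets in $G-\{a,b\}$. Since $V(W)=V(P_{mid})\cup Y$, minimality forces every internal vertex of $R$ to lie off $W$; and since the $P_{mid}$-endpoint of $R$ is an interior vertex of $P_{mid}$ (degree $2$, neighbours inside $P_{mid}$), $R$ is not an edge of $W$. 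Hence $R$ is a $(W,P_{mid},W-V(P_{mid}))$-path, contradicting the hypothesis; this gives the separation.

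For (2), let $C$ be an $S$-cycle meeting $P_{mid}$. As $C_0$ avoids $P_{mid}$ we have $C\neq C_0$, so $\mu(G,S)\le 1$ forces $V(C)\cap V(C_0)\neq\emptyset$, and thus $C$ meets $Y$. The crux is to analyse how $C$ crosses between $P_{mid}$ and $Y$. Consider any maximal arc $R$ of $C$ whose interior avoids $W$ and whose two ends lie in $V(P_{mid})$ and in $Y$; since $C$ meets both sides, such arcs exist. If $R$ had an internal vertex it would be off $W$, making $R$ a forbidden $(W,P_{mid},W-V(P_{mid}))$-path; so $R$ is a single edge. An edge from $P_{mid}$ to $Y$ not lying in $E(W)$ would again be such a forbidden path, so $R\in E(W)$, and therefore $R\in\{aa^-,bb^+\}$ by the first paragraph. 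Reading the $W$-vertices of $C$ cyclically, the $P_{mid}$-vertices and the $Y$-vertices occur in blocks, and each switch between a $P_{mid}$-block and a $Y$-block is realised by one such crossing edge; as there are at least two switches but only the two edges $aa^-,bb^+$ available, $C$ must use both of them. Hence $C$ contains $a,b$ together with $a^-\in V(P_v)$ and $b^+\in V(P_w)$, proving that $C$ contains both gates and a vertex of $P_v$ for each endpoint $v$ of $P$. For the final clause, if $C$ avoids an endpoint $v$, I would trace $C$ from $a^-$ in the direction \emph{away} from $a$ (not using the edge $aa^-$) until it first reaches a vertex $t\in W-V(P_v)$; such $t$ exists because $C$ contains vertices outside $P_v$ (e.g.\ $a$, where the trace closes up). Letting $q$ be the last $P_v$-vertex before $t$, the segment from $q$ to $t$ has all internal vertices off $W$; it is not the edge $a^-a$ (that edge was avoided) and $q$ has no $W$-neighbour outside $P_v$ other than $a$, so it is not an edge of $W$. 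Thus it is a $(W,P_v,W-V(P_v))$-path contained in $C$.

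The step I expect to be the main obstacle is the crossing-arc analysis in (2): the real work is to combine the degree-$2$ condition on the internal vertices of $P$ with the hypothesis that no $(W,P_{mid},W-V(P_{mid}))$-path exists, so as to force every passage of $C$ from the $S$-loaded segment $P_{mid}$ out to $W-V(P_{mid})$ to occur through a gate along a genuine edge of $W$. Once this is established, the block-counting argument and the disjointness of $C$ and $C_0$ under $\mu(G,S)\le 1$ are routine; in particular this is what rules out the a priori troublesome situation in which $C$ would meet $P_{mid}$ in only a single gate.
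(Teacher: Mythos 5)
Your proof is correct, and part (1) is the paper's argument almost verbatim: a shortest path from $P_{mid}$ to $W-V(P_{mid})$ in $G-\{a,b\}$ would be a forbidden $(W,P_{mid},W-V(P_{mid}))$-path. For part (2) you share the paper's skeleton --- $\mu(G,S)\le 1$ together with the $S$-cycle $C_0\subseteq W-V(P_{mid})$ forces $C$ to meet $W-V(P_{mid})$, and the absence of forbidden paths forces every passage between the two sides to go through a gate along an edge of $P$ --- but the bookkeeping is genuinely different. The paper works with the vertex cut $\{a,b\}$ obtained in (1), the component $U$ of $G-\{a,b\}$ containing $P_{mid}-\{a,b\}$, and the two arcs of $C-\{a,b\}$; you instead identify $aa^-$ and $bb^+$ as the only edges of $W$ joining $V(P_{mid})$ to $V(W)\setminus V(P_{mid})$, show that every crossing arc of $C$ collapses to one of these two edges, and count block switches in the cyclic order of $V(C)\cap V(W)$. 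Your version buys a little extra robustness: it treats uniformly the degenerate case in which $C$ meets $P_{mid}$ only in a gate (where the paper's step ``$C$ is not contained in $G[V(U)\cup\{a,b\}]$, hence contains both $a$ and $b$'' needs an additional word), and it produces the required vertices $a^-\in V(P_v)$ and $b^+\in V(P_w)$ at no extra cost. Your proof of the final clause (tracing $C$ from $a^-$ away from $a$ to the first vertex of $W-V(P_v)$ and extracting a $W$-path) is essentially the paper's.
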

\begin{proof}
	(Proof of (1)) Suppose that $\{a, b\}$ does not separate $P_{mid}$ and $W-V(P_{mid})$ in $G$.
	Then a shortest path from $P_{mid}$ to $W-V(P_{mid})$ in $G-\{a,b\}$
	 is a $(W, P_{mid}, W-V(P_{mid}))$-path, a contradiction.
	
	\medskip

	(Proof of (2)) 
	By (1), $\{a, b\}$ separates $P_{mid}$ and $W-V(P_{mid})$ in $G$.
	Let $U$ be the connected component of $G-\{a, b\}$ containing $P_{mid}-\{a,b\}$.

	If $C$ is fully contained in $G[V(U)\cup \{a,b\}]$, 
	then it is vertex-disjoint from an  $S$-cycle in $W-V(P_{mid})$ given by the assumption.
	It means that $C$ is not fully contained in $G[V(U)\cup \{a,b\}]$ and it contains both $a$ and $b$.
	
	Now, let $v, w$ be the endpoints  of $P$ 
	such that $\dist_P(a, v)\le \dist_P(b, v)$.
	As $C$ is not fully contained in $G[V(U)\cup \{a,b\}]$, 
	$a$ has a neighbor in $C$ that is not contained in $V(U)\cup \{a,b\}$.
	Let $a'$ be such a neighbor.
	Assume that $a'$ is not the neighbor of $a$ in $P$.
	Then following the direction from $a$ to $a'$ in $C$, 
	either $C$ meets $W$ on exactly $a$ or 
	we can find a $(W, P_{mid}, W-V(P_{mid}))$-path.
	In both cases, they contradict with the given assumption.
	Therefore, $a'$ is the neighbor of $a$ in $P$, which further implies that 
	$C$ contains a vertex in $P_v$.
	 
	 Lastly, suppose that $v\notin V(C)$.
	By the symmetric argument, $C$ also contains a vertex in $P_w$.
	As $C-(V(U)\cup \{a,b\})$ is connected, 
	there should be a path from $P_v$ to $P_w$ in $G-(V(U)\cup \{a,b\})$. 
	As $v\notin V(C)$, 
	it implies that there is a $(W, P_v, W-V(P_v))$-path.
\end{proof}

	\begin{lemma}\label{lem:twocycles}
	Let $(G, S)$ be a graph such that $G$ contains an $S$-cycle subgraph $W$.
	Let $C$ be a cycle of $W$ and $v,w\in V(C)$ and $P$ be a $(W, G[\{v\}], G[\{w\}])$-path such that 
	\begin{itemize}
		\item for the two cycles $C_1$ and $C_2$ of $C\cup P$ other than $C$, 
		$W$ has two cycles $C_1'$ and $C_2'$ where $C_i$ is vertex-disjoint from $C_i'$ for each $i\in \{1,2\}$.
	\end{itemize}
	Then $G$ has two vertex-disjoint $S$-cycles.
	\end{lemma}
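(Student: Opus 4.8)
The plan is to exploit the fact that $C$, being a cycle of the $S$-cycle subgraph $W$, must itself be an $S$-cycle, and then to show that this forces one of the two new cycles $C_1, C_2$ to inherit a vertex of $S$. First I would set up the theta-graph structure of $C\cup P$. Since $v,w\in V(C)$ are distinct, they split $C$ into two arcs $A_1$ and $A_2$ with $V(A_1)\cap V(A_2)=\{v,w\}$ and $A_1\cup A_2=C$. Because $P$ is a $W$-path, its internal vertices lie outside $W\supseteq C$, so $A_i\cup P$ genuinely closes up into a cycle for each $i$, sharing with the other only the path $P$. After relabeling the arcs if necessary, I would take $C_1=A_1\cup P$ and $C_2=A_2\cup P$, matching the labels fixed in the hypothesis.

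Next, since every cycle of $W$ is an $S$-cycle, the cycle $C$ contains some vertex $s\in S$. As $V(C)=V(A_1)\cup V(A_2)\subseteq V(C_1)\cup V(C_2)$, this $s$ lies on $C_1$ or on $C_2$. I would then split into these two symmetric cases. If $s\in V(C_1)$, then $C_1$ is an $S$-cycle, and I would pair it with $C_1'$; if $s\in V(C_2)$, then $C_2$ is an $S$-cycle, and I would pair it with $C_2'$. In either case the hypothesis guarantees that $C_i$ is vertex-disjoint from $C_i'$, and $C_i'$ is itself an $S$-cycle because it is a cycle of the $S$-cycle subgraph $W$. This produces two vertex-disjoint $S$-cycles and completes the argument.

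There is no serious obstacle here; the argument is essentially a single observation. The only points that need care are the bookkeeping of which arc carries the vertex $s$ (and hence which of the two prescribed pairs $(C_i,C_i')$ to invoke) and the verification that $C_1$ and $C_2$ are bona fide cycles, which follows from $P$ being a $W$-path with interior disjoint from $C$. The crux is simply that an $S$-vertex of $C$ must survive into at least one of the two sub-cycles formed together with $P$, after which the disjointness assumptions deliver the conclusion immediately.
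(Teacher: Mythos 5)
Your argument is correct and is essentially the paper's own proof: since $C$ is a cycle of the $S$-cycle subgraph $W$, its $S$-vertex lies on one of the two arcs into which $v,w$ split $C$, hence one of $C_1,C_2$ is an $S$-cycle, and pairing it with the corresponding $C_i'$ (itself an $S$-cycle as a cycle of $W$) gives the two disjoint $S$-cycles. The extra bookkeeping you include about the theta-graph structure is fine but not a departure from the paper's one-line argument.
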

	\begin{proof}
	Since $W$ is an $S$-cycle subgraph, one of $C_1$ and $C_2$ is an $S$-cycle.
	If $C_i$ is an $S$-cycle, then $C_i$ and $C_i'$ are two vertex-disjoint $S$-cycles in $G$.
	\end{proof}
	
	\subsection{Finding an extension for $S$-cycle subgraphs}\label{subsec:extension}
	
	As explained in Section~\ref{sec:intro}, we will recursively find a larger $S$-cycle subgraph.
	The following lemma describes a way to find an extension.
	
\begin{lemma}\label{lem:intersection2}
	Let $(G, S)$ be a rooted graph with $\mu(G, S)\le 1$.
	Let $W$ be an $S$-cycle subgraph of $G$ containing a cycle, and $T\subseteq V(W)\cap S$ be an $S$-cycle hitting set of $W$. 
	If $G-T$ contains an $S$-cycle $C$, then either
	$\abs{V(C)\cap V(W)}= 1$ or
	$C$ contains a $W$-extension.
\end{lemma}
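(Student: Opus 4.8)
The plan is to analyze how $C$ meets $W$ and reduce everything to one clean sufficient condition for a $W$-extension. First I would record two reductions. Since $W$ is an $S$-cycle subgraph, every cycle of $W$ is an $S$-cycle; as $T\subseteq V(W)\cap S$ hits all $S$-cycles of $W$ while $C$ avoids $T$, the cycle $C$ cannot itself be a cycle of $W$, so $C\not\subseteq W$. Moreover $V(C)\cap V(W)\neq\emptyset$, since otherwise $C$ and any cycle of $W$ would be two vertex-disjoint $S$-cycles, contradicting $\mu(G,S)\le 1$. If $\abs{V(C)\cap V(W)}=1$ we are in the first alternative of the conclusion, so I may assume $\abs{V(C)\cap V(W)}\ge 2$ and aim to locate a $W$-extension inside $C$.

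The key observation is the following. Traversing $C$, its vertices in $V(W)$ cut $C$ into arcs whose interiors avoid $V(W)$; each such arc is either an edge of $W$ or a $W$-path (an \emph{excursion} of $C$), and since $C\not\subseteq W$ at least one excursion exists. Any excursion $X$ that contains a vertex of $S$ is automatically a $W$-extension: the internal vertices of $X$ have degree $2$ in $W\cup X$, so every cycle of $W\cup X$ that is not already a cycle of $W$ must traverse the whole of $X$, and hence contains that vertex of $S$. I would therefore fix a vertex $s\in S\cap V(C)$ and ask where it lies. As excursion interiors lie outside $W$, every vertex of $V(C)\cap V(W)$ is either an endpoint of an excursion or is interior to a maximal subpath of $C$ contained in $W$. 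If $s$ lies on an excursion, either as an interior vertex or as an endpoint, that excursion is a $W$-extension and we are done.

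The remaining, and main, case is when every vertex of $S\cap V(C)$ is interior to a maximal $W$-subpath of $C$, and, arguing for a contradiction, no excursion is a $W$-extension. For each excursion $X_i$ with endpoints $p_i,q_i$, failing to be a $W$-extension produces a $p_i$-$q_i$ path $R_i$ in $W$ avoiding $S$ with $X_i\cup R_i$ an $S$-free cycle; in particular every excursion is itself $S$-free. I expect this to be the hard part: the goal is to combine such an $S$-free fundamental cycle with the $S$-vertex $s$ sitting on the $W$-portion of $C$ in order to exhibit two vertex-disjoint $S$-cycles and contradict $\mu(G,S)\le 1$. Concretely, I would isolate a cycle of $W$ together with an excursion attaching to it at two points and apply Lemma~\ref{lem:twocycles}, so that the two cycles of the resulting theta separate the $S$-vertex $s$ from the $S$-free region and each admits a disjoint cycle in $W$. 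The delicate issue, and where the real work lies, is controlling how the rerouting paths $R_i\subseteq W$ interact with the other $W$-subpaths of $C$, so that the two extracted $S$-cycles are genuinely vertex-disjoint.
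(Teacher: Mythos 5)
Your opening reductions and the observation that any $W$-path of $C$ containing a vertex of $S$ is automatically a $W$-extension match the paper's proof exactly and are correct. The gap is the main case, which you explicitly leave open (``I expect this to be the hard part'', ``where the real work lies''): when every vertex of $S\cap V(C)$ lies on $W$ and, for contradiction, no excursion is a $W$-extension. Your plan there --- isolate a cycle of $W$ with an excursion attached at two points and invoke Lemma~\ref{lem:twocycles} to manufacture two vertex-disjoint $S$-cycles --- cannot work in the stated generality: Lemma~\ref{lem:twocycles} requires $W$ to contain two further cycles $C_1'$, $C_2'$ disjoint from the two new cycles of the theta, but in this lemma $W$ may be as small as a single $S$-cycle (which is exactly how the lemma is first applied in Lemma~\ref{lem:reduction1}), so there is nothing in $W$ to pair the theta's cycles with, and indeed $G$ need not contain two disjoint $S$-cycles at all since $\mu(G,S)\le 1$ is a hypothesis.

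The missing idea is to use the hitting set $T$ structurally rather than to hunt for disjoint cycles. Since $T$ meets every cycle of $W$, the graph $W-T$ is a forest. Take your vertex $s=v\in S\cap V(W)\cap V(C)$; after your reductions its two neighbours $v_1,v_2$ on $C$ are neighbours of $v$ in $W$ (an edge of $C$ at $v$ leaving $W$, or forming a $W$-path at $v$, is already handled), and $v_1,v_2$ lie in distinct components of $W-(T\cup\{v\})$, since a $v_1$--$v_2$ path in $W-T$ avoiding $v$ would close a cycle in the forest $W-T$. The path $C-v$ joins $v_1$ to $v_2$ and avoids $T$, so it must contain a $W$-path $Q$ whose endpoints lie in distinct components of $W-(T\cup\{v\})$. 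Such a $Q$ is automatically a $W$-extension: every path in $W$ between its endpoints meets $T\cup\{v\}\subseteq S$, so every new cycle of $W\cup Q$ is an $S$-cycle. No analysis of the rerouting paths $R_i$ and no appeal to $\mu(G,S)\le 1$ is needed in this step.
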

\begin{proof}
	Suppose that $G-T$ contains an $S$-cycle $C$.
	Because $\mu(G, S)\le 1$ and $W$ has an $S$-cycle, $C$ has to intersect $W$. 
	We may assume that $\abs{V(C)\cap V(W)}\ge 2$; otherwise, we have the first outcome.
	
	Suppose that $C$ contains a $W$-path $X$ containing a vertex of $S$.
	Then clearly, $W\cup X$ is an $S$-cycle subgraph, 
	because every cycle in $W\cup X$ going through $X$ contains  a vertex of $S$.
	Therefore, we may assume that $C$ contains no $W$-path $X$ containing a vertex of $S$, 
	which implies that it has to contain a vertex of $S\cap V(W)$.
	Let $v$ be a vertex of $S\cap V(W)$ contained in $C$.
	
	Observe that $W-T$ is a forest, because $W$ is an $S$-cycle subgraph.
	Let $F$ be a connected component of $W-T$ that contains $v$.
	As $F$ is a tree, either $v$ has degree $1$ in $F$, or $F-v$ is disconnected.
	
	Let $v_1$ and $v_2$ be the two neighbors of $v$ in $C$.
	Suppose one of them, say $v_1$,  is in $G-V(W)$. Then following the direction from $v$ to $v_1$ in $C$, 
	we can find a $W$-path $R$ whose one endpoint is $v$. 
	As $v\in S$, $W\cup R$ is an $S$-cycle subgraph.
	Thus, we may assume that $v_1, v_2\in V(W)$.
	Note that they have to be contained in distinct connected components of $F-v$.
	
	Because $v_1$ and $v_2$ are contained in distinct connected components of $W-(T\cup \{v\})$
	and $C-v$ is connected, there should be a $W$-path $Q$ whose endpoints are contained in distinct connected components of $W-(T\cup \{v\})$.
	Then $W\cup Q$ is an $S$-cycle subgraph. This is because every path connecting two endpoints of $Q$ in $W$
	has to meet at least one vertex of $T\cup \{v\}\subseteq S$.
	
	This concludes the lemma.
	\end{proof}

	In the proof of Theorem~\ref{thm:main1}, 
	we will assume $\tau(G, S)>4$ and obtain a contradiction.
	To apply Lemma~\ref{lem:intersection2} to find a larger $S$-cycle subgraph, 
	we need to find a set $T$ in the lemma that has size at most $4$.
	Lemma~\ref{lem:smallhitting} is useful to find such a small hitting set.
	
	\begin{lemma}\label{lem:edgehitting}
	Let $G$ be a connected graph and $F\subseteq E(G)$ such that 
	every cycle of $G$ contains an edge of $F$.
	Then $G$ contains an edge set $X\subseteq F$
	such that $\abs{X}\le \abs{E(G)}-\abs{V(G)}+1$ and $G-X$ has no cycles. 
	\end{lemma}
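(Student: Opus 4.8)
The plan is to produce a spanning tree $T$ of $G$ all of whose non-tree edges lie in $F$, and then set $X := E(G)\setminus E(T)$. The point is that the claimed bound $\abs{E(G)}-\abs{V(G)}+1$ is exactly the number of non-tree edges of any spanning tree of a connected graph, since every spanning tree of $G$ has precisely $\abs{V(G)}-1$ edges. So if I can arrange that all $\abs{E(G)}-\abs{V(G)}+1$ non-tree edges happen to be chosen from $F$, I am done: $X\subseteq F$, $\abs{X}=\abs{E(G)}-(\abs{V(G)}-1)$, and $G-X=T$ is acyclic.

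First I would reformulate the hypothesis. The assumption that every cycle of $G$ contains an edge of $F$ is precisely the statement that $G-F$ is acyclic, i.e.\ that $G-F$ is a spanning forest of $G$ (it retains all vertices of $G$, only edges of $F$ being deleted). Indeed, any cycle of $G-F$ would be a cycle of $G$ using no edge of $F$. Next I would extend this spanning forest to a spanning tree while adding only edges of $F$. The key observation is that \emph{every} edge of $G$ whose endpoints lie in two distinct connected components of $G-F$ must belong to $F$: if such an edge were not in $F$, it would be an edge of $G-F$, forcing its endpoints into the same component. Now, since $G$ is connected, as long as $G-F$ has more than one component there is an edge of $G$ joining two of them, and by the observation that edge lies in $F$; adding it joins two trees of the forest without creating a cycle and decreases the number of components by one. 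Repeating greedily yields a spanning tree $T$ of $G$ with $E(G)\setminus F\subseteq E(T)$.

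Finally I would take $X:=E(G)\setminus E(T)$. Because every edge outside $F$ already lies in $E(T)$, we get $X\subseteq F$; because $T$ is a spanning tree of the connected graph $G$ we get $\abs{E(T)}=\abs{V(G)}-1$ and hence $\abs{X}=\abs{E(G)}-\abs{V(G)}+1$; and $G-X=T$ has no cycles. I do not expect a genuine obstacle here, as this is essentially the cyclomatic-number (graphic-matroid) argument. The only point requiring a little care is the greedy extension step, namely verifying that each edge used to merge components is forced to lie in $F$ and that no step creates a cycle; both follow cleanly from the reformulation of the hypothesis above.
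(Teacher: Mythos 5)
Your proof is correct, but it takes a different route from the paper's. The paper argues by induction on $\abs{E(G)}$: it picks a cycle, deletes an edge of $F$ lying on it (such an edge exists by hypothesis), observes that connectivity and the hypothesis are preserved for $F\setminus\{e\}$, and applies the induction hypothesis, gaining one unit of slack in the bound at each step. You instead reformulate the hypothesis once and for all as ``$G-F$ is a spanning forest of $G$'' and then grow that forest to a spanning tree $T$ using only edges of $F$, which works because any edge of $G$ joining two distinct components of the current forest joins two distinct components of $G-F$ and hence must lie in $F$; taking $X=E(G)\setminus E(T)$ finishes the argument. Both proofs are sound (and both handle the loops and multiple edges the paper permits, since a loop or a parallel pair is a cycle and a loop never joins two components). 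Your construction makes the cyclomatic-number interpretation explicit and delivers the exact count $\abs{X}=\abs{E(G)}-\abs{V(G)}+1$ in one step, whereas the paper's deletion induction is the mirror-image greedy process and accumulates the same bound term by term; the two are essentially dual writeups of the same underlying graphic-matroid fact, and neither has an advantage the other lacks for the purposes of this paper.
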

	\begin{proof}
	We prove by induction on $\abs{E(G)}$. 
	We may assume that $G$ has a cycle; otherwise, we may take $X=\emptyset$ as $0= \abs{E(G)}-\abs{V(G)}+1$.
	Let $C$ be a cycle of $G$, and let $e$ be an edge in $F\cap E(C)$.
	Note that $G-e$ is still connected, and every cycle of $G-e$ contains an edge of $F\setminus \{e\}$.
	By induction hypothesis, 
	$G-e$ contains an edge set $X'\subseteq F\setminus \{e\}$ such that
	\[\abs{X'}\le \abs{E(G-e)}-\abs{V(G-e)}+1=\abs{E(G)}-\abs{V(G)}\]
	and $G-e-X'$ has no cycles.
	Thus, $X=X'\cup \{e\}$ is a required set of edges.
	\end{proof}
	We can translate Lemma~\ref{lem:edgehitting} for
	$S$-cycle subgraphs.
		\begin{lemma}\label{lem:smallhitting}
	Let $(G, S)$ be an $S$-cycle $H$-subdivision for some connected graph $H$.
	Then $G$ contains a vertex set $U\subseteq S$
	such that $\abs{U}\le \abs{E(H)}-\abs{V(H)}+1$ and $G-U$ has no cycles. 
	\end{lemma}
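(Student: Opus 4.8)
The plan is to lift Lemma~\ref{lem:edgehitting} from the model graph $H$ up to its subdivision $G$, turning an edge hitting set of $H$ into a vertex hitting set of $G$ that is drawn from $S$. First I would fix the subdivision structure: for each edge $e\in E(H)$, let $P_e$ be the certifying path of $G$ corresponding to $e$, where I take $P_e$ to include its two branching endpoints. Because $G$ is an $H$-subdivision, every cycle $C$ of $G$ traverses a family of certifying paths whose corresponding edges form a cycle $\widehat{C}$ of $H$; since internal vertices of certifying paths have degree two, this correspondence between cycles of $G$ and cycles of $H$ is faithful and I will use it in both directions.

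Next I would define a candidate edge set $F\subseteq E(H)$ by putting $e\in F$ whenever $P_e$ contains a vertex of $S$ (counting its branching endpoints). The key claim is that every cycle of $H$ contains an edge of $F$. To verify this, take a cycle $\widehat{C}$ of $H$ and let $C$ be the corresponding cycle of $G$. As $G$ is an $S$-cycle subgraph, $C$ contains some $s\in S$. If $s$ is internal to a certifying path $P_e$, then $e\in E(\widehat{C})\cap F$; if instead $s$ is a branching vertex on $C$, then $s$ is a common endpoint of the two certifying paths carrying the two edges of $\widehat{C}$ incident with $s$, so both of those edges lie in $F$. Either way $F\cap E(\widehat{C})\neq\emptyset$.

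Then I would apply Lemma~\ref{lem:edgehitting} to the connected graph $H$ with this $F$, obtaining $X\subseteq F$ with $\abs{X}\le \abs{E(H)}-\abs{V(H)}+1$ such that $H-X$ is acyclic. For each $e\in X$ I choose a vertex $u_e\in S\cap V(P_e)$, which exists since $e\in F$, and set $U=\{u_e:e\in X\}\subseteq S$, so that $\abs{U}\le \abs{X}\le \abs{E(H)}-\abs{V(H)}+1$. To finish, I would show $G-U$ is acyclic: any cycle $C$ of $G-U$ corresponds to a cycle $\widehat{C}$ of $H$, and since $H-X$ is acyclic, $\widehat{C}$ must use some $e\in X$; but then $C$ contains the entire certifying path $P_e$, hence the vertex $u_e\in U$, contradicting $C\subseteq G-U$.

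The main obstacle is precisely the bookkeeping around branching vertices, which is what distinguishes the vertex version from the edge version. A single branching vertex lies on several certifying paths simultaneously, so when an $S$-cycle of $G$ is witnessed only by branching vertices I must route this correctly through $F$ (this is exactly why $P_e$ is taken to include its branching endpoints), and I must check that deleting the chosen \emph{vertices} $u_e$, rather than deleting edges, still destroys every cycle of $G$ whose $H$-image uses a hit edge. The faithfulness of the cycle correspondence between $G$ and $H$, valid because internal certifying-path vertices have degree two, is what makes this translation go through cleanly.
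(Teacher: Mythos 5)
Your proposal is correct and follows essentially the same route as the paper: define $F$ as the set of edges of $H$ whose certifying paths (including branching endpoints) contain a vertex of $S$, check via the cycle correspondence that $F$ meets every cycle of $H$, apply Lemma~\ref{lem:edgehitting}, and pick one $S$-vertex per chosen edge. Your write-up just makes explicit the bookkeeping about branching vertices and the faithfulness of the cycle correspondence that the paper leaves implicit.
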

	\begin{proof}
	Observe that a vertex $v$ of $S$ in $G$ hits all cycles along a certifying path containing $v$.
	Let $F$ be the set of edges of $H$ corresponding to the certifying paths of $G$ containing a vertex of $S$.
	
	By  Lemma~\ref{lem:edgehitting}, $H$ contains an edge set $X\subseteq F$ such that $\abs{X}\le \abs{E(H)}-\abs{V(H)}+1$ and $H-X$ has no cycles.
	By taking a vertex of $S$ for each certifying path corresponding to an edge of $X$, 
	we can find a vertex set $U\subseteq S$ such that 
	$\abs{U}\le \abs{E(H)}-\abs{V(H)}+1$ and $G-U$ has no cycles.
	\end{proof}

	The following lemma is another application of Lemma~\ref{lem:intersection2}.
	\begin{lemma}\label{lem:conclusion}
	Let $(G, S)$ be a rooted graph with $\mu(G, S)\le 1$ such that 
	$G$ contains an $S$-cycle $H$-subdivision $W$ for some graph $H$.	
	Let $T\subseteq V(W)$ such that $G-T$ has no $S$-cycle intersecting $W$ on at most $1$ vertex, and it has no $W$-extension.
	If $G-T$ has no $S$-cycle containing a vertex of $S\cap V(W)$, then $T$ is an $S$-cycle hitting set of $G$.
	\end{lemma}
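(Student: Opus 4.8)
The plan is to proceed by contradiction. Suppose $G-T$ contains an $S$-cycle $C$; I will show that $C$ must be one of the three kinds of $S$-cycle that the hypotheses explicitly forbid, namely (i) one meeting $W$ in at most one vertex, (ii) one containing a vertex of $S\cap V(W)$, or (iii) one carrying a $W$-extension. Since each of these is excluded, no such $C$ can exist, which is precisely the assertion that $T$ is an $S$-cycle hitting set of $G$. This is really the easy half of the dichotomy proved in Lemma~\ref{lem:intersection2}, specialized to a setting where the ``$S\cap V(W)$'' outcome has already been assumed away.

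First I would use two of the hypotheses to trim the situation. Because $G-T$ has no $S$-cycle meeting $W$ in at most one vertex, I may assume $\abs{V(C)\cap V(W)}\ge 2$. Because $G-T$ has no $S$-cycle through a vertex of $S\cap V(W)$, I may assume $C$ avoids $S\cap V(W)$. Now $C$, being an $S$-cycle, contains some $s\in S$; by the previous sentence $s\notin V(W)$, so $s$ is an internal vertex of one of the arcs into which the (at least two) vertices of $V(C)\cap V(W)$ cut $C$. That arc is a path $X\subseteq C$ with distinct endpoints in $V(W)$, with all internal vertices off $W$, and of length at least $2$; hence $X$ is a genuine $W$-path, and since $C\subseteq G-T$ all of its vertices avoid $T$, so $X$ is a $W$-path lying in $G-T$.

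It then remains to verify that $X$ is a $W$-extension, i.e.\ that $W\cup X$ is an $S$-cycle subgraph. Every cycle of $W\cup X$ is either a cycle of $W$---already an $S$-cycle since $W$ is an $S$-cycle $H$-subdivision---or a cycle using an internal vertex of $X$; in the latter case, as the internal vertices of $X$ have degree $2$ in $W\cup X$, the cycle must traverse all of $X$ and in particular pass through $s\in S$, so it too is an $S$-cycle. Thus $X$ is a $W$-extension contained in $G-T$, contradicting the remaining hypothesis. This finishes the proof.

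The argument is short, and the only step requiring genuine care is confirming that the arc $X$ meets the definition of a $W$-path (distinct endpoints in $V(W)\setminus T$, internal vertices disjoint from $V(W)\cup T$, and not a single edge of $W$), all of which follow mechanically from $\abs{V(C)\cap V(W)}\ge 2$ and $C\subseteq G-T$. It is worth noting that neither $\mu(G,S)\le 1$ nor the full force of Lemma~\ref{lem:intersection2} is needed here: the hypothesis forbidding $S$-cycles that meet $W$ in at most one vertex already forces $C$ to meet $W$ in at least two vertices. The main ``obstacle'' is thus only to resist overcomplicating a case analysis whose every branch is in fact forced.
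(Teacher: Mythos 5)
Your proof is correct and follows essentially the same route as the paper, which disposes of the statement in two lines by invoking Lemma~\ref{lem:intersection2}: once $C$ avoids $S\cap V(W)$ and meets $W$ in at least two vertices, some arc of $C$ is a $W$-path through a vertex of $S$, hence a $W$-extension. By unfolding that case of Lemma~\ref{lem:intersection2} directly, you even sidestep the minor mismatch that the cited lemma formally requires $T$ to be a subset of $S\cap V(W)$ hitting all $S$-cycles of $W$, which is not literally assumed here.
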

	\begin{proof}
	Suppose that $G-T$ has an $S$-cycle $C$ and assume that 
	$C$ does not meet any vertex of $S\cap V(W)$.
	Then by Lemma~\ref{lem:intersection2}, it meets $W$ at one vertex, or it contains a $W$-extension avoiding $T$. 
	But this contradicts the assumption.
\end{proof}

	\subsection{Nice graphs}
	
		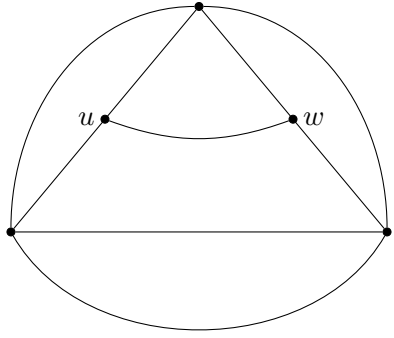
\begin{figure}
 \tikzstyle{v}=[circle, draw, solid, fill=black, inner sep=0pt, minimum width=3pt]
 \tikzstyle{w}=[rectangle, draw, solid, fill=black, inner sep=0pt, minimum width=5pt, minimum height=5pt]
  \centering
   \begin{tikzpicture}[scale=0.5]
      
        \node [v]  (v1) at (-5, 0){};
        \node [v]  (v2) at (0, 6){};
        \node [v]  (v3) at (5, 0){};
      
       \draw(v1)--(v2)--(v3)--(v1);
      \draw(v1) [in=180,out=90] to (v2);
    \draw(v3) [in=0,out=90] to (v2);
      \draw(v1) [in=-120,out=-60] to (v3);
      
     	\node [v] (a1) at (-2.5, 3) {};
     	\node [v] (a2) at (2.5, 3) {};
            \draw (a1) node [left] {$u$}; 
            \draw (a2) node [right] {$w$}; 
      \draw(a1) [in=-160,out=-20] to (a2);

\end{tikzpicture}
   \caption{$K_3^{++}$ is nice because it has no two vertex-disjoint cycles, but if we subdivide two edges and add an edge $uw$ as in figure, then we have two vertex-disjoint cycles.}
  \label{fig:nicegraph}
\end{figure}

	We say that a graph $H$ is \emph{nice} if $H$ has no two vertex-disjoint cycles, 
	but for any two distinct edges of $H$, 
	if we subdivide these edges once and add an edge between the new subdivided vertices, 
	then the obtained graph has two vertex-disjoint cycles.
	See Figure~\ref{fig:nicegraph} which depicts why $K_3^{++}$ is nice.
	Nice graphs have an additional property that for every edge $e_1$, there is a cycle that does not contain this edge.
	This is because if one choose another edge $e_2$ and subdivide both edges and add a new edge between two subdivided vertices, 
	then we have two vertex-disjoint cycles. Clearly, one of them does not contain the subdivided vertex from $e_1$, so originally, it does not contain  $e_1$.

	We mainly use the observation that  
	$K_3^{+++}$ and $K_4^{++}$ are nice.	This notion is useful in the following sense.
	
	\begin{lemma}\label{lem:nice}
	Let $H$ be a nice graph and let $(G, S)$ be a graph such that $G$ contains an $S$-cycle $H$-subdivision $W$.
	\begin{enumerate}
	\item 
	Let $P_1$ and $P_2$ be two distinct certifying paths of $W$ and 
	$X$ be a $(W, P_1', P_2')$-path that is a $W$-extension where for each $i\in \{1,2\}$, $P_i'$ is the path obtained from $P_i$ by removing its endpoints. Then $G$ has two vertex-disjoint $S$-cycles.
	\item If $G$ contains an $S$-cycle $C$ that intersects $W$ on exactly one vertex which is an internal vertex of a certifying path,
	then $G$ has two vertex-disjoint $S$-cycles.
	\end{enumerate}
	\end{lemma}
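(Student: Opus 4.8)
The plan is to reduce both parts directly to the defining property of nice graphs, lifting the relevant operation from $H$ to the subdivision $W$.

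For part (1), I would first observe that attaching $X$ realizes exactly the operation appearing in the definition of niceness. Let $e_1$ and $e_2$ be the edges of $H$ corresponding to the certifying paths $P_1$ and $P_2$; since $P_1 \neq P_2$, we have $e_1 \neq e_2$. The endpoints of $X$ are internal vertices of $P_1$ and $P_2$, so they subdivide $e_1$ and $e_2$ respectively, while the internal vertices of $X$ lie outside $W$ and have degree $2$. Hence, declaring these two endpoints to be branching vertices, $W \cup X$ is a subdivision of the graph $H'$ obtained from $H$ by subdividing $e_1$ and $e_2$ once each and adding an edge between the two new vertices. Because $H$ is nice, $H'$ has two vertex-disjoint cycles; lifting each through the subdivision gives two vertex-disjoint cycles in $W \cup X$. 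Since $X$ is a $W$-extension, $W \cup X$ is an $S$-cycle subgraph, so both of these cycles are $S$-cycles, and as $W \cup X \subseteq G$ they are the desired two vertex-disjoint $S$-cycles.

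For part (2), let $v$ be the unique vertex of $V(C) \cap V(W)$, and let $e$ be the edge of $H$ whose certifying path $P$ contains $v$ as an internal vertex. Here I would invoke the additional property of nice graphs noted just before the lemma: there is a cycle of $H$ that does not contain $e$. Lifting this cycle through the subdivision yields a cycle $D$ of $W$ that does not traverse $P$. Since every internal vertex of a certifying path has degree $2$ in $W$, any cycle of $W$ meeting an internal vertex of $P$ must traverse all of $P$; consequently $D$ avoids $v$. As $W$ is an $S$-cycle subgraph, $D$ is an $S$-cycle, and since $C$ meets $W$ only in the vertex $v \notin V(D)$, the cycles $C$ and $D$ are vertex-disjoint $S$-cycles in $G$.

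Both arguments are short, and neither seems to require the hypothesis $\mu(G,S) \le 1$. The only point needing care is the structural identification in part (1), namely that $W \cup X$ is genuinely an $H'$-subdivision rather than merely containing one: this amounts to checking that the two endpoints of $X$ may legitimately serve as new branching vertices and that no unintended vertex identifications arise. I expect this bookkeeping, rather than any substantive difficulty, to be the main thing to verify.
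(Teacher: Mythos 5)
Your proposal is correct and follows exactly the argument the paper intends: part (1) is the definition of niceness lifted through the subdivision $W\cup X$ (with the $W$-extension hypothesis ensuring both resulting cycles are $S$-cycles), and part (2) is the ``additional property'' of nice graphs stated just before the lemma, lifted to a cycle of $W$ avoiding the certifying path through $v$. The paper simply asserts both statements as clear from these two facts, so your write-up is the same proof with the routine bookkeeping made explicit.
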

	\begin{proof}
	The first statement is clear from the definition of nice graphs, and the second statement is clear from the additional property of nice graphs.
	\end{proof}
	
	\subsection{Setting}
	In Sections~\ref{sec:k4subdiv} to \ref{sec:k33final}, we will prove 
	Theorem~\ref{thm:main1}, based on lemmas proved in this section.
	In these sections, we fix a rooted graph $(G, S)$ and
	suppose that $\mu(G, S)\le 1$ and $\tau(G, S)>4$. From this, we derive a contradiction at the end.
	In most of lemmas, an $S$-cycle $H$-subdivision $W$ for some $H$ will be given.
	For convenience, we will call a $(A, B)$-path for a $(W, A, B)$-path.

\section{Reduction to a $K_4$-subdivision}\label{sec:k4subdiv}

	In this section, we show that $G$ contains an $S$-cycle $K_4$-subdivision.  
	First we show that it contains an $S$-cycle $H$-subdivision for some
	$H\in \{K_3^{+++}, K_4\}$, and in the case when it contains an $S$-cycle $K_3^{+++}$-subdivision, 
	we prove that $\tau(G, S)\le 4$.
	
			\begin{lemma}\label{lem:reduction1}
	The graph $G$ contains an $S$-cycle $H$-subdivision for some 
	$H\in \{K_3^{+++}, K_4\}.$
\end{lemma}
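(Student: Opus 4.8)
The plan is to start from a single $S$-cycle and repeatedly enlarge it into a more and more complex $S$-cycle subgraph, with Lemmas~\ref{lem:smallhitting} and~\ref{lem:intersection2} as the engine, until its underlying multigraph contains a $K_4$- or a $K_3^{+++}$-subdivision. Since $\tau(G,S)>4\ge 1$, the graph $G$ has an $S$-cycle, and a single cycle is already an $S$-cycle subgraph whose underlying multigraph $H$ satisfies $\abs{E(H)}-\abs{V(H)}+1=1$. I would maintain the invariant that the current subgraph $W$ is an $S$-cycle $H$-subdivision with $\abs{E(H)}-\abs{V(H)}+1\le 4$; note that because every cycle of $W$ is an $S$-cycle and $\mu(G,S)\le 1$, the multigraph $H$ has no two vertex-disjoint cycles.

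The enlargement step runs as follows. Given $W$ with $\abs{E(H)}-\abs{V(H)}+1\le 4$, Lemma~\ref{lem:smallhitting} produces a set $T\subseteq S\cap V(W)$ with $\abs{T}\le \abs{E(H)}-\abs{V(H)}+1\le 4$ meeting every cycle of $W$. As $\tau(G,S)>4$, the set $T$ is not an $S$-cycle hitting set of $G$, so $G-T$ contains an $S$-cycle $C$, and Lemma~\ref{lem:intersection2} gives the dichotomy that either $\abs{V(C)\cap V(W)}=1$ or $C$ contains a $W$-extension $X$. In the extension case $W\cup X$ is again an $S$-cycle subgraph, and a short computation on the endpoints of $X$ shows that $\abs{E}-\abs{V}+1$ increases by exactly one. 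I would then read off the new underlying graph from where $X$ attaches: an extension joining the interiors of two distinct certifying paths creates a new branching vertex and drives $H$ toward a $K_4$-subdivision, whereas an extension that merely raises the multiplicity between two existing branching vertices pushes $H$ up the hierarchy $\theta_3,\theta_4,\theta_5,K_3^{+},K_3^{++}$ toward $K_3^{+++}$. Throughout, $\mu(G,S)\le 1$ via Lemma~\ref{lem:twocycles} forbids any extension creating two vertex-disjoint cycles; this is exactly what excludes a chord across a single certifying path of a theta, since such a chord together with the cycle through the other two paths would be two disjoint $S$-cycles.

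The bulk of the work is the bookkeeping that shows the process terminates at a $K_4$- or $K_3^{+++}$-subdivision rather than at some other multigraph of cycle rank at most four. I would organize it by listing the multigraphs without two vertex-disjoint cycles and with $\abs{E}-\abs{V}+1\le 4$ — cycles, bouquets of cycles through a common vertex, the generalized theta graphs $\theta_3,\theta_4,\theta_5$, and the triangle-multigraphs $K_3^{+},K_3^{++}$ — and verifying for each that every admissible extension either reaches a target or moves strictly up this finite list, so the procedure cannot loop and must halt. A recurring subtlety is the $\theta_n$ ladder, where a parallel extension could in principle pass from $\theta_5$ to $\theta_6$, which is neither target. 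Here I would use $\tau(G,S)>4$ to prevent funnelling: a branching vertex of a theta hits all of its cycles, so if such a vertex lay in $S$ we could take $\abs{T}=1$, and then the escaping $S$-cycle, avoiding that vertex, cannot contribute a parallel chord; and if no branching vertex lies in $S$, the fact that a single vertex is not a global $S$-cycle hitting set forces an $S$-cycle meeting the interiors of two distinct paths, hence a crossing and a $K_4$-subdivision.

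The main obstacle is the single-intersection case $\abs{V(C)\cap V(W)}=1$, where $C$ supplies no $W$-path and the simple enlargement stalls. Here I would use $\mu(G,S)\le 1$ decisively: if $C$ meets $W$ only at a vertex $p$, then every cycle of $W$ must pass through $p$, for otherwise such a cycle is an $S$-cycle vertex-disjoint from the $S$-cycle $C$. Thus $p$ is forced to be a branching vertex hitting all cycles of $W$, which sharply constrains $H$ (for a theta, $p$ is one of its two branching vertices). I would then either reattach $C$ at $p$ to gain cycle rank or argue that all $S$-cycles are funnelled through a bounded vertex set, contradicting $\tau(G,S)>4$. Making this dichotomy exhaustive, and certifying the cycle-rank bookkeeping and the two-disjoint-cycle exclusions in every subcase, is the most laborious part; the conceptual content, however, is entirely contained in the three ingredients above — Lemma~\ref{lem:smallhitting} to extract a small hitting set inside $S$, Lemma~\ref{lem:intersection2} to convert an escaping $S$-cycle into an extension, and $\mu(G,S)\le 1$ both to forbid bad extensions and to pin down the single-intersection case.
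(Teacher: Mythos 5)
Your overall strategy --- grow an $S$-cycle subgraph by alternating Lemma~\ref{lem:smallhitting} and Lemma~\ref{lem:intersection2}, with $\mu(G,S)\le 1$ excluding bad extensions --- is exactly the paper's, and your cycle-rank bookkeeping is sound. The gap is in your resolution of the $\theta_n$ ladder, which you correctly flag as the main danger but do not actually defuse. If no branching vertex of the theta lies in $S$, your claim that ``a single vertex is not a global $S$-cycle hitting set'' forces the escaping $S$-cycle to meet the interiors of two distinct paths does not follow: Lemma~\ref{lem:intersection2} requires deleting a hitting set $T\subseteq S\cap V(W)$ of $W$, and for $\theta_n$ with no branching vertex in $S$ such a $T$ must contain internal vertices of at least $n-1$ of the parallel paths; the escaping cycle in $G-T$ is then free to pass through both branching vertices and contribute a parallel chord, producing $\theta_{n+1}$. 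Even in your favourable case where a branching vertex $w_1\in S$ and you take $T=\{w_1\}$, the forced extension can still run from the \emph{other} branching vertex $w_2$ to an internal vertex of one path; from $\theta_5$ this yields a triangle multigraph with edge multiplicities $4,2,1$, which has no two vertex-disjoint cycles, contains neither a $K_4$- nor a $K_3^{+++}$-subdivision, and has cycle rank $5$, so Lemma~\ref{lem:smallhitting} no longer supplies a hitting set within the budget of $4$ --- a genuine dead end for the procedure. The same budget problem infects your treatment of the single-intersection case: absorbing $C$ at the hub produces bouquets (and, one extension later, mixed bouquet/theta structures absent from your list) whose cycle rank again outruns $4$.

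The one idea your proposal is missing, and the one the paper uses, is to augment the deleted set beyond the hitting set so the ladder never starts: once a $\theta_3$-subdivision $W$ with branching vertices $w_1,w_2$ is in hand, delete $T_3\cup\{w_1,w_2\}$ where $T_3\subseteq S\cap V(W)$ is a hitting set of size at most $2$. This set has size at most $4$, so $\tau(G,S)>4$ still yields an escaping $S$-cycle; Lemma~\ref{lem:intersection2} still applies (the extra deleted vertices only restrict the candidate cycles further); and the resulting extension is forced to attach at internal vertices of the certifying paths, whence endpoints on the same path give two disjoint $S$-cycles and endpoints on distinct paths give $K_4$ outright. The analogous augmentation (deleting the high-degree branching vertices together with the small hitting set) is what lets the paper pass from $K_3^+$ to $\{K_3^{++},K_4\}$ and from $K_3^{++}$ to $\{K_3^{+++},K_4\}$ without ever meeting $\theta_4$, $\theta_5$, large bouquets, or unbalanced triangle multigraphs. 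Without this step your procedure is not guaranteed to terminate at one of the two targets.
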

\begin{proof}
	We first show that $G$ contains an $S$-cycle $H_1$-subdivision for some $H_1\in \{K_3^+, \theta_3\}$.
	As $\tau(G, S)>4$, $G$ contains an $S$-cycle.
	Let $C_1$ be an $S$-cycle of $G$, and $v\in S\cap V(C_1)$. 
	Again since $\tau(G, S)>4$, $G-v$ also contains an $S$-cycle, say $C_2$.
	Since $v\in S$, 
	by Lemma~\ref{lem:intersection2},
	either $C_1$ and $C_2$ meet at exactly one vertex, or
    $C_2$ contains a $C_1$-extension $X_1$.
    In the latter case, we have an $S$-cycle $\theta_3$-subdivision.
    Thus, we may assume that $C_1$ and $C_2$ meet at exactly one vertex.
    
    Note that $C_1\cup C_2$ can be seen as a subdivision of the graph on one vertex with two loops.
    Let $v_1$ be the intersection of $C_1$ and $C_2$.
    By Lemma~\ref{lem:smallhitting}, $C_1\cup C_2$ has an $S$-cycle hitting set $T_1\subseteq S\cap V(C_1\cup C_2)$ 
    such that $\abs{T_1}\le 2$.
    As $\tau(G, S)>4$, $G-(T_1\cup \{v\})$ contains an $S$-cycle, say $C_3$.
    By Lemma~\ref{lem:intersection2}, either $C_3$ and $C_1\cup C_2$ meet at exactly one vertex, or $C_3$ contains a
    $(C_1\cup C_2)$-extension $X_2$.
    In the former case, we have two vertex-disjoint $S$-cycles, a contradiction.
    In the latter case, the endpoints of $X_2$ have to be contained in distinct cycles, otherwise, we have two vertex-disjoint $S$-cycles.
    Thus, $G$ contains an $S$-cycle $K_3^+$-subdivision, as required.

\medskip	
	In the next, we show that 
	$G$ contains an $S$-cycle $H_2$-subdivision for some $H_2\in \{K_3^{++}, K_4\}$.
	We know that $G$ contains an $S$-cycle $H_1$-subdivision $W$ for some $H_1\in \{K_3^+, \theta_3\}$.
	
	Suppose that $W$ is an $S$-cycle $K_3^+$-subdivision. 
	Note that $\abs{E(K_3^+)}=5$ and $\abs{V(K_3^+)}=3$. 
	Thus, by Lemma~\ref{lem:smallhitting}, 
	$W$ contains an $S$-cycle hitting set $T_2\subseteq S\cap V(W)$ of size at most $3$.
	Let $w$ be the vertex incident with four edges in $W$, and let $v_1, v_2$ be the two other branching vertices.
	As $\tau(G, S)>4$, $G-(T_2\cup \{w\})$ contains an $S$-cycle, say $C_4$.
	
	By Lemma~\ref{lem:intersection2}, 
	either $C_4$ and $W$ meet at exactly one vertex, or 
	$C_4$ contains a $W$-extension $X_3$.
	In the former case, there are two vertex-disjoint $S$-cycles.
	So, the latter case holds.
	If the endpoints of $X_3$ are contained in the certifying path between $v_1$ and $v_2$, then 
	$W\cup X$ contains either two vertex-disjoint $S$-cycles, or an $S$-cycle  $K_3^{++}$-subdivision.
	So, we may assume that one endpoint of $X_3$ is in the certifying path between $w$ and $v_i$, as an internal vertex.
	If the other endpoint is contained in the same certifying path, then we have two vertex-disjoint $S$-cycles, 
	and otherwise, $G$ contains an $S$-cycle $K_4$-subdivision.
	
	Now, suppose that $W$ is an $S$-cycle $\theta_3$-subdivision.
	Note that $\abs{E(\theta_3)}=3$ and $\abs{V(\theta_3)}=2$. 
	Thus, by Lemma~\ref{lem:smallhitting}, 
	$W$ contains an $S$-cycle hitting set $T_3\subseteq S\cap V(W)$ of size at most $2$.
	Let $w_1, w_2$ be the branching vertices of $W$.
	As $\tau(G, S)>4$, $G-(T_3\cup \{w_1, w_2\})$ contains an $S$-cycle, say $C_5$.
	
	By Lemma~\ref{lem:intersection2}, 
	either $C_5$ and $W$ meet at exactly one vertex, or 
	$C_5$ contains a $W$-extension $X_4$.
	In the former case, there are two vertex-disjoint $S$-cycles.
	So, the latter case holds.
	If the endpoints of $X_4$ are contained in the same certifying path, then 
	$W\cup X_4$ contains two vertex-disjoint $S$-cycles.
	Thus, the two endpoints of $X_4$ are contained in distinct certifying paths, 
	and $G$ contains an $S$-cycle $K_4$-subdivision.
	
	\medskip
	Lastly, we show that 
	if $G$ contains an $S$-cycle $K_3^{++}$-subdivision, 
	then it contains an $S$-cycle $K_3^{+++}$-subdivision or an $S$-cycle $K_4$-subdivision.
	Suppose that 
	$G$ contains an $S$-cycle $K_3^{++}$-subdivision $W'$.
	Note that $\abs{E(K_3^{++})}=6$ and $\abs{V(K_3^{++})}=3$. 
	Thus, by Lemma~\ref{lem:smallhitting}, 
	$W'$ contains an $S$-cycle hitting set $T_3\subseteq S\cap V(W')$ of size at most $4$.
	
	As $\tau(G, S)>4$, $G-T_3$ contains an $S$-cycle, say $C_6$.
	By Lemma~\ref{lem:intersection2}, 
	either $C_6$ and $W'$ meet at exactly one vertex, or 
	$C_6$ contains a $W'$-extension.
	In the former case, we have two vertex-disjoint $S$-cycles.
	So, $C_6$ contains a $W'$-extension, say $X_5$.
	If both endpoints of $X_5$ are branching vertices, 
	then $G$ contains an $S$-cycle $K_3^{+++}$-subdivision, and we are done.
	Thus, we may assume that one of the endpoints of $X_5$ is an internal vertex of a certifying path of $W'$.
	The other endpoint of $X_5$ is contained in the same certifying path, then we have two vertex-disjoint $S$-cycles.
	Otherwise, $G$ contains an $S$-cycle $K_4$-subdivision, as required.
\end{proof}

\begin{proposition}\label{prop:k3triple}
	If $G$ contains an $S$-cycle $K_3^{+++}$-subdivision, then $\tau (G, S)\le 3$.
\end{proposition}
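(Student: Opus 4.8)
The plan is to show that an $S$-cycle $K_3^{+++}$-subdivision $W$ already admits an $S$-cycle hitting set of size at most $3$ that is contained in $S\cap V(W)$, and then to argue via Lemma~\ref{lem:conclusion} that this set hits \emph{all} $S$-cycles of $G$. Recall that $K_3^{+++}$ is the nice graph obtained from $K_3$ by tripling one edge and doubling the other two, so $\abs{E(K_3^{+++})}=7$ and $\abs{V(K_3^{+++})}=3$. A direct application of Lemma~\ref{lem:smallhitting} only yields a hitting set of size $\abs{E}-\abs{V}+1=5$, which is far too large, so the first and most important step is to find a \emph{smaller} hitting set by exploiting the special structure of $K_3^{+++}$.

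First I would analyze how few edges of $K_3^{+++}$ suffice to hit all its cycles, under the constraint that the chosen edges correspond to certifying paths carrying a vertex of $S$. Label the three branching vertices $a,b,c$ with a triple edge on $ab$ and double edges on $bc$ and $ca$. Every cycle of the subdivision runs along exactly two of the three certifying-path bundles; one checks that deleting one certifying path from each of two of the three bundles (say one from the $ab$-bundle and one from the $ca$-bundle) destroys all cycles, giving a feedback set of size $2$ at the edge level, hence a hitting set $T\subseteq S\cap V(W)$ with $\abs{T}\le 3$ after accounting for the fact that some bundles may need two representatives. The key combinatorial point to verify carefully is exactly which bundles must be hit: because $W$ is an $S$-cycle subdivision, every certifying path forming a cycle passes through $S$, and I would use Lemma~\ref{lem:edgehitting}/Lemma~\ref{lem:smallhitting} applied to a cleverly chosen multigraph minor (identifying parallel certifying paths) rather than to $K_3^{+++}$ itself, to drive the bound down to $3$.

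Having produced $T\subseteq S\cap V(W)$ with $\abs{T}\le 3$ that hits all cycles of $W$, I would then show $T$ hits all $S$-cycles of $G$. By Lemma~\ref{lem:conclusion}, it suffices to check three things in $G-T$: that there is no $S$-cycle meeting $W$ on at most one vertex, that there is no $W$-extension, and that there is no $S$-cycle through a vertex of $S\cap V(W)$. For the first, an $S$-cycle meeting $W$ in at most one vertex would (since $W$ already contains an $S$-cycle disjoint from it) contradict $\mu(G,S)\le 1$; since $K_3^{+++}$ is nice, Lemma~\ref{lem:nice}(2) handles the one-internal-vertex case directly. For the $W$-extension case, since $K_3^{+++}$ is nice, Lemma~\ref{lem:nice}(1) shows that any extension joining two distinct certifying paths produces two vertex-disjoint $S$-cycles, contradicting $\mu(G,S)\le 1$; an extension with both endpoints on the same certifying path is easily seen to create two disjoint cycles as well. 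The third condition, that no $S$-cycle passes through $S\cap V(W)$ in $G-T$, is where I expect to route through Lemma~\ref{lem:intersection2}, concluding again by niceness.

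\textbf{The main obstacle} I anticipate is the first step: getting the hitting-set bound down from the naive value $5$ to $3$. This requires a hands-on case analysis of the cycle space of $K_3^{+++}$ together with the side condition that the hitting vertices lie in $S$ and sit on certifying paths that actually carry $S$-vertices, so one cannot blindly delete arbitrary edges. I would organize this by noting that since $W$ is an $S$-cycle subdivision, \emph{every} certifying path that lies on a cycle must contain a vertex of $S$, which guarantees enough candidate vertices; the delicate part is verifying that two well-chosen $S$-vertices suffice to break the doubled bundles while a third breaks the tripled bundle, and that no cycle escapes. Once that combinatorial count is pinned down, the remainder follows cleanly from the niceness of $K_3^{+++}$ and the reusable machinery of Lemmas~\ref{lem:intersection2}, \ref{lem:nice}, and \ref{lem:conclusion}.
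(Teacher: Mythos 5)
There is a genuine gap, and it sits exactly where you flagged your ``main obstacle'': the hitting set $T\subseteq S\cap V(W)$ of size at most $3$ that your whole plan rests on does not exist in general. Your claim that ``deleting one certifying path from each of two of the three bundles destroys all cycles, giving a feedback set of size $2$ at the edge level'' is false: $K_3^{+++}$ has $7$ edges on $3$ vertices, so any feedback edge set must leave at most $2$ edges and hence has size at least $5$ --- after removing one $ab$-path and one $ca$-path you still have a digon inside the $ab$-bundle, a digon inside the $bc$-bundle, and many triangles. Worse, the constraint $T\subseteq S$ cannot be rescued by ``accounting'': if none of the three branching vertices lies in $S$ and each certifying path has a single internal vertex, with exactly one $ab$-path and one $bc$-path carrying no $S$-vertex (the remaining five paths each carrying one), then $W$ is still an $S$-cycle subdivision, but every digon pairing an unhit path with a hit one forces the unique $S$-vertex of the hit path into $T$, and the two triangles through both unhit paths force both $ca$-internal vertices in as well; the minimum hitting set inside $S\cap V(W)$ has size $5$. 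So the reduction to Lemma~\ref{lem:intersection2} with a small $T\subseteq S$ is unavailable, and with it the easy conclusion you sketch for the third condition collapses.

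The paper sidesteps this by taking $T=\{v_1,v_2,v_3\}$ to be the three \emph{branching} vertices, which need not lie in $S$. Niceness of $K_3^{+++}$ still gives, cheaply, that $G-T$ has no $W$-extension and no $S$-cycle meeting $W$ in one vertex; but because $T\not\subseteq S$, Lemma~\ref{lem:intersection2} no longer applies to $S$-cycles through $S\cap V(W)$, and the real work of the proof is to rule those out by hand: one applies Lemma~\ref{lem:mid} to each certifying path $P$ with $P_{mid}$ nonempty, tracks where the forced $(P_{v},W-V(P_{v}))$-paths can land without creating two disjoint $S$-cycles, and concludes with a one-vertex separator argument (a gate of $Q^1$ cutting off $Q^1_{v_2}$ together with the $P^i$ attached to it) that contradicts the cycle's connectivity. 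If you want to repair your write-up, you should adopt a $T$ of branching vertices and supply this analysis; as it stands, both the existence of your $T$ and the deduction of the third condition of Lemma~\ref{lem:conclusion} are unsupported.
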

\begin{proof}
	Let $W$ be an $S$-cycle $K_3^{+++}$-subdivision of $G$.
	Let $v_1, v_2, v_3$ be the branching vertices of $W$, 
	and let $P^1, P^2, P^3$ be the certifying paths from $v_1$ to $v_2$, 
	and $Q^1, Q^2$ be the certifying paths from $v_2$ to $v_3$, and 
	$R^1, R^2$ be the certifying paths from $v_3$ to $v_1$.
	Let $T=\{v_1, v_2, v_3\}$.
	Since $K_3^{+++}$ is nice and $\mu(G, S)\le 1$, $G-T$ has no $W$-extension and has no $S$-cycle meeting $W$ on one vertex.
	
	We claim that $G-T$ has no $S$-cycle containing a vertex in $S\cap V(W)$.
	If this is true, then by Lemma~\ref{lem:conclusion}, 
	we conclude that $T$ is an $S$-cycle hitting set and thus $\tau (G, S)\le 3$.
	
	By applying Lemma~\ref{lem:twocycles}, 
	we can observe that in $G-T$, 
	\begin{itemize}
	\item there is no $( P^i, P^j)$-path for distinct $i, j\in \{1, 2, 3\}$, 
	\item there is no $(Q^1, Q^2)$-path, and 
	\item there is no $(R^1, R^2)$-path.
	\end{itemize}

	\begin{claim}\label{claim:k3+++pipj}
 	Let $i\in \{1,2,3\}$. No $S$-cycle in $G-T$ contains a vertex of $P^i_{mid}$.
	\end{claim}
	\begin{clproof}
    It suffices to prove for $i=1$. 
    Suppose that an $S$-cycle $H$ in $G-T$ contains a vertex of $P^1_{mid}$.
	As $G-T$ has no $W$-extension, 
	there is no $(P^1_{mid}, W-V(P^1_{mid}))$-path.	
	Also, observe that $W-V(P^1_{mid})$ contains an $S$-cycle.

	Because $v_1, v_2\in T$, by Lemma~\ref{lem:mid}, $H$ contains a $(P^1_{v_j}, W-V(P^1_{v_j}))$-path $X_j$ for each $j\in \{1, 2\}$.
	The endpoint of $X_j$ in $W-V(P^1_{v_j})$ is not contained in $P^1\cup P^2\cup P^3$.
	Therefore, $P^1\cup X_1\cup X_2\cup Q^1\cup Q^2\cup R^1\cup R^2-\{v_1, v_2\}$ contains  
	an $S$-cycle avoiding $P^2\cup P^3$.  
	This is a contradiction.
	\end{clproof}
	
	Next we show that no $S$-cycle in $G-T$ contains a vertex of $Q^1_{mid}$. If this is true, then by symmetry, 
	$G-T$ has no $S$-cycle containing a vertex in $S\cap V(W)$.

	Suppose for contradiction that $G-T$ contains an $S$-cycle $H$ containing a vertex of $Q^1_{mid}$.
	As in Claim~\ref{claim:k3+++pipj}, 
	by Lemma~\ref{lem:mid}, $H$ contains a $(Q^1_{v_2}, W-V(Q^1_{v_2}))$-path $Y$.
	Then the endpoint of $Y$ in $W-V(Q^1_{v_2})$ should be in $P^1\cup P^2\cup P^3$; otherwise, 
	we can find an $S$-cycle vertex-disjoint from $P^1\cup P^2$.

	\begin{claim}\label{claim:k3+++compo}
	Let $i\in \{1,2,3\}$. If there is a $(P^i, Q^1_{v_2})$-path in $G-T$, then any $(P^i, W-V(P^i))$-path in $G-T$ satisfies that 
	its endpoint in $W-V(P^i)$ is contained in $Q^1_{v_2}$.
	\end{claim}
	\begin{clproof}
	Suppose $i=1$ and  
	there is a $(P^1, W-V(P^1)-V(Q^1_{v_2}))$-path $Z$ in $G-T$.
	Then the endpoint of $Z$ in $W-V(P^1)-V(Q^1_{v_2})$ is contained in $Q^1\cup Q^2\cup R^1\cup R^2$.
	It implies that there is an $S$-cycle avoiding $P^2\cup P^3$, a contradiction.
	So, $G-T$ has no $(P^1, W-V(P^1)-V(Q^1_{v_2}))$-path.
	\end{clproof}
	
	One can also observe that if there is a $(Q^1_{v_2}, X)$-path in $G-T$ where $X=(Q^1\cup Q^2\cup R^1\cup R^2)-V(Q^1_{v_2})$, then $G$ contains two vertex-disjoint $S$-cycles. So, such a path does not exist.
	
	Let $z\in V(Q^1_{mid})$ such that $\dist_{Q^1}(z, v_2)$ is minimum.
	Let $I\subseteq \{1, 2, 3\}$ be the set such that for every $i\in I$, there is a $(P^i, Q^1_{v_2})$-path in $G-T$.
	By Claim~\ref{claim:k3+++compo},
	$z$ separates $(\bigcup_{i\in I}P^i) \cup Q^1_{v_2}$ and the rest of $W$ in $G-T$.
	But the $S$-cycle $H$ contains a vertex of $Q^1_{v_2}$ and a vertex of $Q^1_{v_3}$ by Lemma~\ref{lem:mid}.
	This is a contradiction.
\end{proof}

\section{Reduction to a $W_4$ or $K_{3,3}^+$-subdivision}\label{sec:w4ork33}

By Lemma~\ref{lem:reduction1} and Proposition~\ref{prop:k3triple}, 
we know that $G$ contains an $S$-cycle $K_4$-subdivision.
In this section, we prove that $G$ contains an $S$-cycle $H$-subdivision for some 
			$H\in \{K_4^{++}, K_4^{+++}, W_4, K_{3,3}^+\}$, 
			and in the case when $G$ contains an $S$-cycle $H$ subdivision for some 
			$H\in \{K_4^{++}, K_4^{+++}\}$, 
			$\tau (G, S)\le 4$.

\begin{lemma}\label{lem:reduction2}
	If $G$ has an $S$-cycle $K_4$-subdivision,
	then it contains an $S$-cycle $H$-subdivision for some 
			$H\in \{K_4^{++}, K_4^{+++}, W_4, K_{3,3}^+\}$.
\end{lemma}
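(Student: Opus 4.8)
The plan is to enlarge the given $S$-cycle $K_4$-subdivision by repeatedly attaching $W$-extensions, in the same incremental style as Lemma~\ref{lem:reduction1}, until the underlying multigraph becomes one of the four listed graphs. The guiding principle is that an $S$-cycle subgraph has the property that \emph{every} one of its cycles is an $S$-cycle; hence whenever the current $S$-cycle subgraph contains two vertex-disjoint cycles we immediately obtain two vertex-disjoint $S$-cycles, contradicting $\mu(G,S)\le 1$. This lets $\mu(G,S)\le 1$ eliminate all the unwanted configurations for free.

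First I would fix an $S$-cycle $K_4$-subdivision $W$ and apply Lemma~\ref{lem:smallhitting}: since $\abs{E(K_4)}-\abs{V(K_4)}+1=3$, there is a set $T\subseteq S\cap V(W)$ with $\abs{T}\le 3$ meeting every cycle of $W$. As $\tau(G,S)>4$, the graph $G-T$ still has an $S$-cycle $C$, and Lemma~\ref{lem:intersection2} gives that either $\abs{V(C)\cap V(W)}=1$ or $C$ contains a $W$-extension $X$. In the first case I would let $u$ be the unique common vertex; some triangle of the subdivision avoids $u$ (the opposite triangle when $u$ is a branching vertex, and a triangle omitting the certifying path through $u$ when $u$ is internal), and this triangle is a cycle of $W$, hence an $S$-cycle, disjoint from $C$ --- contradicting $\mu(G,S)\le 1$. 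So $C$ supplies a $W$-extension $X$.

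Next I would analyse the two endpoints of $X$, each being a branching vertex or an internal vertex of a certifying path; writing the branching vertices as $1,2,3,4$, there are six essentially different configurations. If both endpoints are branching vertices we get a parallel certifying path, i.e.\ an $S$-cycle $K_4^+$-subdivision. If one endpoint is a vertex $i$ and the other is internal to an edge incident with $i$, or if both are internal to one edge, the new cycle is a digon disjoint from a triangle on the remaining vertices, giving two disjoint $S$-cycles. If both endpoints are internal to two adjacent certifying paths, $W\cup X$ is a triangular prism and again contains two disjoint triangles. The two productive configurations are: a vertex $i$ joined to an internal vertex of the edge not incident with $i$, which makes $W\cup X$ an $S$-cycle $W_4$-subdivision with hub $i$ (our goal); and two internal vertices of two opposite certifying paths, which makes $W\cup X$ an $S$-cycle $K_{3,3}$-subdivision. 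Thus after one extension we either finish with $W_4$ or reach an $S$-cycle $K_4^+$- or $K_{3,3}$-subdivision.

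Finally I would dispose of the two intermediate subdivisions by one further round of the same argument. Both $K_4^+$ and $K_{3,3}$ have the property that deleting any single vertex leaves a cycle (a digon, or $K_{2,3}$, or a smaller complete graph minus a vertex), so Lemma~\ref{lem:intersection2} again forces a genuine extension, the one-vertex case being killed by two disjoint $S$-cycles as before. A short case check on the endpoints of this second extension then shows that every outcome is either one of the four targets or a pair of disjoint cycles: for $K_4^+$ (with doubled edge $12$) one obtains $K_4^{+++}$ by tripling $12$, $K_4^{++}$ by doubling a second edge incident with it, $K_{3,3}^+$ by joining the doubled part to the opposite edge, and two disjoint digons (a contradiction) by doubling the opposite edge; for $K_{3,3}$ the only non-contradictory extension joins two branching vertices of the same part, producing $K_{3,3}^+$. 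Since each extension raises the cycle rank by one and every target has rank at most $5$, the procedure stops after at most two extensions. The main obstacle is not conceptual but bookkeeping: one must run through all endpoint placements of the extension and, in each non-target case, reliably exhibit two vertex-disjoint cycles, the extension of $K_4^+$ being the most case-heavy step.
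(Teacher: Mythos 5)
Your proposal is correct and follows essentially the same route as the paper: a hitting set of size at most $3$ (resp.\ $4$) from Lemma~\ref{lem:smallhitting} plus Lemma~\ref{lem:intersection2} yields a $W$-extension, whose endpoint placement gives $K_4^+$, $W_4$, or $K_{3,3}$ in the first round, after which the intermediate cases $K_4^+$ and $K_{3,3}$ are resolved by one further round exactly as in Claims~\ref{claim:reductionk4++} and~\ref{claim:reduction5}. The only differences are presentational (e.g.\ your enumeration of the $K_4^+$ outcomes omits a few internal-endpoint placements that the paper folds into the ``same argument as Claim~\ref{claim:reduction3}'' remark), and all of those resolve to a target or to two disjoint cycles as you anticipate.
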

\begin{proof}

	Suppose that $G$ has an $S$-cycle $K_4$-subdivision $W$.
	
\begin{claim}\label{claim:reduction3}
The graph $G$ contains an $S$-cycle $H_1$-subdivision for some $H_1\in \{K_4^+, W_4, K_{3,3}\}$.
\end{claim}
\begin{clproof}
	Note that $\abs{E(K_4)}=6$ and $\abs{V(K_4)}=4$. 
	Thus, by Lemma~\ref{lem:smallhitting}, 
	$W$ contains an $S$-cycle hitting set $T\subseteq S\cap V(W)$ such that $\abs{T}\le 3$.
	As $\tau(G, S)>4$, $G-T$ contains an $S$-cycle, say $C_1$.
	By Lemma~\ref{lem:intersection2}, 
	either $C_1$ and $W$ meet at exactly one vertex, 
	or $C_1$ contains a $W$-extension $X_1$.
	In the former case, we have two vertex-disjoint $S$-cycles.
	So, we may assume that the latter statement holds.
	
	If both endpoints of $X_1$ are branching vertices of $W$, then 
	$W\cup X_1$ is an $S$-cycle $K_4^+$-subdivision.
	Assume that exactly one endpoint of $X_1$ is a branching vertex.
	If the endpoints of $X_1$ are contained in a same certifying path, then $W\cup X_1$ contains two vertex-disjoint $S$-cycles.
	Otherwise, $G$ contains an $S$-cycle $W_4$-subdivision.
	
	Lastly, suppose that both endpoints are not branching vertices.
	If the certifying paths containing these endpoints share an endpoint, then 
	there are two vertex-disjoint $S$-cycles. Otherwise, $G$ contains an $S$-cycle $K_{3,3}$-subdivision, as required.
	\end{clproof}
	
	We repeat a similar argument to find a $K_4^{++}$-subdivision or a $K_4^{+++}$-subdivision.
	\begin{claim}\label{claim:reductionk4++}
	If $G$ contains an $S$-cycle $K_4^+$-subdivision, then it contains an $S$-cycle $H_2$-subdivision for some $H_2\in \{K_4^{++}, K_4^{+++}, W_4, K_{3,3}\}$.
	\end{claim}
	\begin{clproof}
	Let $W'$ be an $S$-cycle $K_4^{+}$-subdivision.
	Note that $\abs{E(K_4^+)}=7$ and $\abs{V(K_4^+)}=4$. 
	Thus, by Lemma~\ref{lem:smallhitting}, 
	$W'$ contains an $S$-cycle hitting set $T\subseteq S\cap V(W')$ such that $\abs{T}\le 4$.
	As $\tau(G, S)>4$, $G-T$ contains an $S$-cycle, say $C_2$.
	By Lemma~\ref{lem:intersection2}, 
	either $C_2$ and $W'$ meet at exactly one vertex, 
	or $C_2$ contains a $W'$-extension $X_2$.
	In the former case, we have two vertex-disjoint $S$-cycles.
	So, we may assume that the latter statement holds.

	If the both endpoints of $X_2$ are branching vertices of $W'$, then 
	it contains an $S$-cycle $K_4^{++}$-subdivision or an $S$-cycle $K_4^{+++}$-subdivision or two vertex-disjoint $S$-cycles.
	When $X_2$ has at most one branching vertex as an endpoint, by the same argument in Claim~\ref{claim:reduction3}, 
	we can find an $S$-cycle $W_4$-subdivision or an $S$-cycle $K_{3,3}$-subdivision.
	\end{clproof}
	
	We show that 
	if an $S$-cycle $K_{3,3}$-subdivision exists, 
	then there is an $S$-cycle $K_{3,3}^+$-subdivision.
	
	\begin{claim}\label{claim:reduction5}
If $G$ contains an $S$-cycle $K_{3,3}$-subdivision, then it contains an $S$-cycle $K_{3,3}^+$-subdivision.
\end{claim}
\begin{clproof}
Let $W''$ be an $S$-cycle $K_{3,3}$-subdivision.
Note that $\abs{E(K_{3,3})}=9$ and $\abs{V(K_{3,3})}=6$. 
	Thus, by Lemma~\ref{lem:smallhitting}, 
	$W''$ contains an $S$-cycle hitting set $T\subseteq S\cap V(W'')$ of size at most $4$.
	As $\tau(G, S)>4$, $G-T$ contains an $S$-cycle, say $C_3$.
	By Lemma~\ref{lem:intersection2}, 
	either $C_3$ and $W''$ meet at exactly one vertex, 
	or $C_3$ contains a $W''$-extension $X_3$.
	In the former case, we have two vertex-disjoint $S$-cycles.
	So, we may assume that the latter statement holds.
	
	Assume that the both endpoints of $X_3$ are branching vertices of $W''$.
	If both endpoints are contained in the same part of the bipartition of $K_{3,3}$, then $G$ contains an $S$-cycle $K_{3,3}^+$-subdivision.
	Otherwise, $G$ has two vertex-disjoint $S$-cycles, a contradiction.
	
	Now, assume that at most one endpoint of $X_3$ is a branching vertex of $W''$.
	Let $v, w$ be the endpoints  of $X_3$.
	Since one of $v$ and $w$ is not a branching vertex, 
	$W''$ has a path from $v$ to $w$ in $W''$ containing at most one branching vertex from each of the bipartition.
	Thus, other 4 branching vertices with certifying paths between them provide an $S$-cycle disjoint from one created by $X_3$ and the path from $v$ to $w$.
	So, $G$ contains two vertex-disjoint $S$-cycles, a contradiction.
\end{clproof}
We conclude that $G$ contains an $S$-cycle $H$-subdivision for some 
			$H\in \{K_4^{++}, K_4^{+++}, W_4, K_{3,3}^+\}$.
\end{proof}

Next, we focus on the case when
$G$ contains an $S$-cycle $K_4^{++}$-subdivision.

	\begin{figure}
 \tikzstyle{v}=[circle, draw, solid, fill=black, inner sep=0pt, minimum width=3pt]
 \tikzstyle{w}=[rectangle, draw, solid, fill=black, inner sep=0pt, minimum width=5pt, minimum height=5pt]
  \centering
   \begin{tikzpicture}[scale=2]

		\node [v, label=below:$v_1$]  (c1) at (0, 0-5){};
        \node [v, label=below:$v_2$]  (c3) at (2, 0-5){};
        \node [v, label=above:$v_4$]  (c2) at (1, 1.7-5){};
        \node [v, label=below:$v_3$]  (c4) at (1, 0.8-5){};
	
		\draw(c1) to [edge node={node [left] {$R^1_1$}}] (c2);
		\draw(c2) to [edge node={node [right] {$R^2_1$}}] (c3);
		\draw(c3) to [edge node={node [below] {$Q^1$}}] (c1);

		\draw(c4) to [edge node={node [right] {$Q^3$}}] (c1);        
		\draw(c4) to [edge node={node [right] {$R^3$}}] (c2);        
		\draw(c4) to [edge node={node [left] {$Q^2$}}] (c3);        
          \draw(c1) [in=180,out=120] to [edge node={node [left] {$R^1_2$}}] (c2);
          \draw(c3) [in=0,out=60] to [edge node={node [right] {$R^2_2$}}] (c2);

\end{tikzpicture}
   \caption{The $K_4^{++}$-subdivision in Proposition~\ref{prop:k4++}.}
  \label{fig:K4++}
\end{figure}
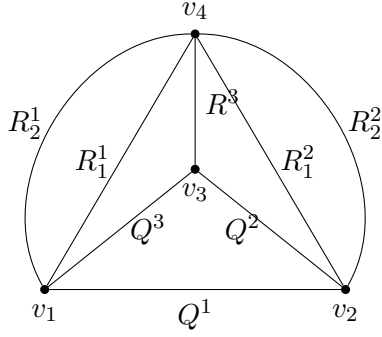

	\begin{proposition}\label{prop:k4++}
If $G$ contains an $S$-cycle $K_4^{++}$-subdivision, then $\tau (G, S)\le 4$.	
	\end{proposition}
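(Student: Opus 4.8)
The plan is to follow the now-familiar template established by Proposition~\ref{prop:k3triple}: exhibit a candidate hitting set $T$ of size at most $4$, use the fact that $K_4^{++}$ is \emph{nice} to rule out $W$-extensions and one-vertex $S$-cycles in $G-T$, and then invoke Lemma~\ref{lem:conclusion} by showing that $G-T$ has no $S$-cycle through any vertex of $S\cap V(W)$. With the labelling of Figure~\ref{fig:K4++}, the natural first guess for $T$ is the set of branching vertices $\{v_1,v_2,v_3,v_4\}$, which has size exactly $4$. Since $K_4^{++}$ is nice, Lemma~\ref{lem:nice} immediately gives that $G-T$ has no $W$-extension joining the interiors of two certifying paths and no $S$-cycle meeting $W$ in a single internal vertex; combined with $\mu(G,S)\le 1$, this discharges the first two hypotheses of Lemma~\ref{lem:conclusion}.

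The substance of the proof will then be the claim that $G-T$ has no $S$-cycle containing a vertex of $S\cap V(W)$, i.e.\ no $S$-cycle passing through the interior of any certifying path $P\in\{Q^1,Q^2,Q^3,R^1_1,R^1_2,R^2_1,R^2_2,R^3\}$. First I would record, exactly as in Proposition~\ref{prop:k3triple}, the list of forbidden $(P^i,P^j)$-paths in $G-T$: applying Lemma~\ref{lem:twocycles} to each pair of certifying paths that lies on a pair of vertex-disjoint cycles of $W$ rules out, for instance, a $(Q^1,R^3)$-path, a $(R^1_1,R^1_2)$-path, a $(R^2_1,R^2_2)$-path, and so on. Then, assuming for contradiction that an $S$-cycle $H$ meets $P_{mid}$ for some certifying path $P$, I would apply Lemma~\ref{lem:mid}: since both endpoints of $P$ are branching vertices and hence in $T$, the cycle $H$ must contain, for each endpoint $v$ of $P$, a $(P_v,W-V(P_v))$-path whose far endpoint lands on some other certifying path. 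Tracing these two attachment points and using the forbidden-path list, one assembles an $S$-cycle in $W$ that avoids a pair of certifying paths forming a cycle, yielding two vertex-disjoint $S$-cycles and the desired contradiction.

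The step I expect to be the main obstacle is the bookkeeping over the many certifying paths of $K_4^{++}$. Unlike $K_3^{+++}$, here the two doubled edges $R^1$ and $R^2$ are \emph{incident} (they share the branching vertex $v_4$ of degree $6$), so the symmetry is lower and the case analysis for where the two attachment points of $H$ can land is more delicate — in particular the arguments of the form ``Claim~\ref{claim:k3+++compo}'' (showing that once a $(P^i,P_{v})$-path exists, every attachment from $P^i$ must return to the same small piece, so that a single gate vertex separates the cycle) will need to be re-derived for each relevant configuration. The crux is to show that whenever $H$ reaches into the interior of a path $P$, the separation structure guaranteed by Lemma~\ref{lem:mid}(1) forces $H$ to re-enter $W$ on paths that, together with the parts of $W$ that $H$ avoids, produce two disjoint cycles. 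I anticipate that the genuinely new work, beyond transcribing the $K_3^{+++}$ argument, lies in handling the certifying paths $Q^1,Q^2,Q^3$ incident to the degree-$6$ vertex $v_4$, where more attachment targets are available and one must carefully use the minimum-distance gate vertex (as in the final paragraph of Proposition~\ref{prop:k3triple}) to produce the separating cut.
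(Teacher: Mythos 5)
Your overall template (choose $T$ with $\lvert T\rvert\le 4$, rule out $W$-extensions and one-vertex intersections via niceness, then invoke Lemma~\ref{lem:conclusion}) matches the paper's, but your choice $T=\{v_1,v_2,v_3,v_4\}$ is not the paper's choice, and this is where the gap lies. The paper takes $T=\{v_1,v_2,v_4\}$ together with a fourth vertex that is either a gate of $Q^1$ (when $Q^1_{mid}\neq\emptyset$) or the vertex of $S$ on $Q^2\cup Q^3$ closest to $v_3$; the vertex $v_3$ is deliberately left out of $T$. The reason is visible exactly at the step you wave at with ``using the forbidden-path list, one assembles an $S$-cycle \dots yielding two vertex-disjoint $S$-cycles.'' The forbidden-path list produced by Lemma~\ref{lem:twocycles} only excludes $W$-paths that \emph{immediately} close up into an $S$-cycle disjoint from another cycle of $W$. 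The attachment paths forced by Lemma~\ref{lem:mid} are typically not of this kind: for instance, if $u\in S$ lies on $Q^2_{mid}$, the forced $(Q^2_{v_3},W-V(Q^2_{v_3}))$-path can land on $Q^3$, $R^3$, or $R^2_1$ near the shared branching vertex, and the short cycle it closes through $v_3$ (or $v_2$) need not contain any vertex of $S$ — so it is neither a $W$-extension nor an instance of Lemma~\ref{lem:twocycles}, and no contradiction follows. The $S$-cycle $H$ can then keep escaping along the triangle $Q^1\cup Q^2\cup Q^3$ and the spokes, and your sketch gives no mechanism to stop it. The paper's fourth vertex of $T$ exists precisely to trap this situation: either it is a gate of $Q^1$, so Lemma~\ref{lem:mid} kills $Q^1_{mid}$ outright and an auxiliary-graph connectivity argument (Case~2 of Claim~\ref{claim:k4++q2mid}, with no analogue in Proposition~\ref{prop:k3triple}) handles $Q^2\cup Q^3$; or it is the $S$-vertex of $Q^2\cup Q^3$ nearest $v_3$, so that the escaping segment of $H$ is pinned between two vertices of $S$ and every exit from it \emph{is} a $W$-extension.

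Two smaller points. First, whether $\{v_1,v_2,v_3,v_4\}$ is in fact always a hitting set under the hypotheses is not something you establish, and the paper does not establish it either; as written, your proposal restates the goal of the hardest claims rather than proving them. Second, you misread the structure of $K_4^{++}$: the doubled edges meet at $v_4$, which has degree $5$ (not $6$), and $Q^1,Q^2,Q^3$ form the triangle on $v_1,v_2,v_3$ and are \emph{not} incident to $v_4$; the delicate case is the $S$-vertices on that triangle, not paths at $v_4$. So the proposal is a reasonable plan for the easy claims (the analogues of Claims~\ref{claim:k4++rijmid} and~\ref{claim:k4++r3mid}), but it is missing the idea that makes the $Q^1\cup Q^2\cup Q^3$ case go through.
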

	\begin{proof}
	Let $W$ be an $S$-cycle $K_4^{++}$-subdivision of $G$.
	Let $v_1, v_2, v_3, v_4$ be the branching vertices of $W$, 
	and for each $i\in \{1, 2\}$, let $Q^i$ be the certifying path from $v_i$ to $v_{i+1}$, 
	and $Q^3$ be the certifying path from $v_3$ to $v_1$, and 
	for each $j\in \{1, 2\}$, $R^j_1, R^j_2$ be the two certifying paths from $v_4$ to $v_j$,
	and $R^3$ be the certifying path from $v_4$ to $v_3$. 
	See Figure~\ref{fig:K4++} for an illustration.
	Let $T$ be the set obtained from $\{v_1, v_2, v_4\}$ by 
	\begin{itemize}
	\item adding a gate of $Q^1$ if $V(Q^1_{mid})$ is non-empty, 
	\item adding a vertex $w\in S$ on $Q^2\cup Q^3$ where $\dist_{Q^2\cup Q^3}(w, v_3)$ is minimum, otherwise.
	\end{itemize}
	Clearly, $\abs{T}\le 4$. 
	
	Recall that $K_4^{++}$ is nice. 
	Observe that there is no $W$-extension in $G-T$ whose one endpoint is $v_3$; if there is such an extension, 
	then it creates an $S$-cycle disjoint from one of $R^1_1\cup R^1_2$ and $R^2_1\cup R^2_2$.
	Also, there is no $S$-cycle meeting $W$ on exactly $v_3$.
	Thus, $G-T$ has no $W$-extension and has no $S$-cycle meeting $W$ on one vertex.
		
	We will show that $G-T$ has no $S$-cycle containing a vertex in $S\cap V(W)$.
	If this is true, then by Lemma~\ref{lem:conclusion}, $T$ is an $S$-cycle hitting set and thus $\tau (G, S)\le 4$.

	By applying Lemma~\ref{lem:twocycles} appropriately, 
	we can observe that in $G-T$, 
	\begin{itemize}
	\item there is no $(R^i_1, R^i_2)$-path for each $i\in \{1, 2\}$, 
	\item there is no $(R^i_j, R^3-v_3)$-path for $i, j\in \{1, 2\}$, and
	\item there is no $(Q^1, Q^2\cup Q^3)$-path.
	\end{itemize}	
	
	\begin{claim}\label{claim:k4++rijmid}
 Let $i, j\in \{1, 2\}$. No $S$-cycle in $G-T$ contains a vertex of $(R^i_j)_{mid}$.
	\end{claim}
	\begin{clproof}
	It is sufficient to show for $i=j=1$ by symmetry. Suppose that an $S$-cycle $H$ in $G-T$ contains a vertex of $(R^1_1)_{mid}$.
	As $G-T$ has no $W$-extension, 
	by Lemma~\ref{lem:mid}, $H$ contains a $((R^1_1)_{v_k}, W-V((R^1_1)_{v_k}))$-path $X_k$ for each $k\in \{1, 4\}$.

	If the endpoint of $X_1$ in $W-V((R^1_1)_{v_1})$ is not contained in 
	$Q^1\cup Q^2\cup Q^3$, then one can find an $S$-cycle disjoint from $Q^1\cup Q^2\cup Q^3$.
	Also, 
	if the endpoint of $X_4$ in $W-V((R^1_1)_{v_4})$ is not contained in $R^2_1\cup R^2_2$, 
	then one can find an $S$-cycle disjoint from $R^2_1\cup R^2_2$.
	So, we may assume that an endpoint of $X_1$ is contained in $Q^1\cup Q^2\cup Q^3$, and an endpoint of $X_4$ is contained in $R^2_1\cup R^2_2$.
	Without loss of generality, we may assume that the endpoint of $X_4$ is contained in $R^2_1$.
	
	Observe that 
	if there is an $(R^1_1, R^2_2)$-path, then there is an $S$-cycle disjoint from one of $Q^1\cup Q^2\cup Q^3$ and $R^1_2\cup R^3\cup Q^3$. Thus, there is no $(R^1_1, R^2_2)$-path, and  
	similarly, there is no $(R^2_1, R^1_2)$-path. 
	
	Because of $X_4$, the endpoint of $X_1$ in $Q^1\cup Q^2\cup Q^3$ is contained in $Q^3$; otherwise, we can find an $S$-cycle disjoint from $R^1_2\cup R^3\cup Q^3$.
	Also, in $G-T$, there is no $(R^2_1, Q^1\cup Q^3-v_3)$-path because of $R^2_2\cup R^3\cup Q^2$.
	Also, we already observed that there is no $(R^2_1, R^3-v_3)$-path.

	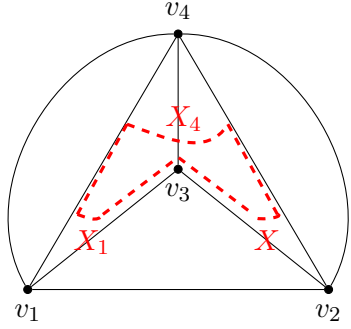
\begin{figure}
 \tikzstyle{v}=[circle, draw, solid, fill=black, inner sep=0pt, minimum width=3pt]
 \tikzstyle{w}=[rectangle, draw, solid, fill=black, inner sep=0pt, minimum width=5pt, minimum height=5pt]
  \centering
   \begin{tikzpicture}[scale=2]

		\node [v, label=below:$v_1$]  (c1) at (0, 0-5){};
        \node [v, label=below:$v_2$]  (c3) at (2, 0-5){};
        \node [v, label=above:$v_4$]  (c2) at (1, 1.7-5){};
        \node [v, label=below:$v_3$]  (c4) at (1, 0.8-5){};
	
		\draw(c1) to (c2);
		\draw(c2) to (c3);
		\draw(c3) to (c1);

		\draw(c4) to (c1);        
		\draw(c4) to (c2);        
		\draw(c4) to (c3);        
          \draw(c1) [in=180,out=120] to (c2);
          \draw(c3) [in=0,out=60] to (c2);

	\draw[very thick, red, dashed] (0.66, 1.1-5) [in=-120, out=-20] to  [edge node={node [above] {$X_4$}}]  (1.33, 1.1-5);
	\draw[very thick, red, dashed] (0.33, 0.5-5) [in=-120, out=-20] to [edge node={node [below] {$X_1$}}] (0.5, 0.5-5);
	\draw[very thick, red, dashed] (1.66, 0.5-5) [in=-120, out=-20] to [edge node={node [below] {$X$}}] (1.5, 0.5-5);
	\draw[very thick, red, dashed] (0.66, 1.1-5)--(0.33, 0.5-5);
	\draw[very thick, red, dashed] (0.5, 0.5-5)--(1, 0.88-5)--(1.5, 0.5-5);
	\draw[very thick, red, dashed] (1.66, 0.5-5)--(1.33, 1.1-5);

\end{tikzpicture}
   \caption{The three paths $X_1, X_4, X$ in Claim~\ref{claim:k4++rijmid} of Proposition~\ref{prop:k4++}. The dotted $S$-cycle along $X_1, X_4, X$ is disjoint from $Q^1\cup R^2_2\cup R^1_2$.}
  \label{fig:K4++claim}
\end{figure}
	
	Thus, we may assume that there is an $(R^2_1, Q^2)$-path; otherwise a gate $z$ of $R^1_1$ separates $(R^1_1)_{v_1}$ and $(R^1_1)_{v_4}$ in $G-T$, contradicting that $H-z$ is a path meeting both parts. Call this path $X$.
	See Figure~\ref{fig:K4++claim} for an illustration.
	Then the $S$-cycle in $R^1_1\cup R^2_1\cup Q^2\cup Q^3\cup X_1\cup X_4\cup X$ that does not meet $v_1, v_2, v_4$ is disjoint from $Q^1\cup R^2_2\cup R^1_2$, a contradiction.		
	Therefore no $S$-cycle in $G-T$ contains a vertex of $(R^1_1)_{mid}$.
	\end{clproof}

	\begin{claim}\label{claim:k4++r3mid}
	No $S$-cycle in $G-T$ contains a vertex of $R^3_{mid}$.
	\end{claim}
	\begin{clproof}
	Suppose that an $S$-cycle $H$ in $G-T$ contains a vertex of $R^3_{mid}$.
	As $G-T$ has no $W$-extension,
	by Lemma~\ref{lem:mid}, $H$ contains an $(R^3_{v_4}, W-V(R^3_{v_4}))$-path, say $X$.
	We observed that the endpoint of $X$ in $W-V(R^3_{v_4})$ is not contained in $R^2_1\cup R^2_2$.
	But then $W\cup X$ has an $S$-cycle vertex-disjoint from $R^2_1\cup R^2_2$, a contradiction.
	\end{clproof}

	By the construction of $T$, if $Q^1_{mid}$ is non-empty, then we added a gate of $Q^1$.
	It means that by Lemma~\ref{lem:mid}, no $S$-cycle in $G-T$ contains a vertex of $Q^1_{mid}$.
	Thus, it remains to show that no $S$-cycle in $G-T$ contains a vertex of $S$ on $Q^2\cup Q^3$.
	
	\begin{claim}\label{claim:k4++q2mid}
	No $S$-cycle in $G-T$ contains a vertex of $S$ on $Q^2\cup Q^3$.
	\end{claim}
	\begin{clproof}
	Suppose that $G-T$ has an $S$-cycle $H$ containing a vertex $u\in S$ on $Q^2\cup Q^3$.
	By symmetry, we may assume that $u\in V(Q_2)$.
	First claim that the two neighbors of $u$ in $H$ are neighbors of $u$ in $W$.
	Suppose for contradiction that there is a neighbor $u'$ of $u$ in $H$ that is not a neighbor in $W$.
    As $H$ is a cycle, following the direction from $u$ to $u'$, either 
   one can find a $W$-extension in $G-T$, or
    $H$ meets exactly $u$ on $W$.
   We know that both cases are not possible. So, we conclude that the two neighbors of $u$ in $H$ are neighbors of $u$ in $W$.

   We additionally claim that if $u=v_3$, then a neighbor of $u$ in $H$ is not in $R^3-\{v_3, v_4\}$. 
   Suppose that a neighbor of $u$ in $H$ is contained in $R^3-\{v_3, v_4\}$.
    As $H-u$ is a path, there is an $(R^3, W-V(R^3))$-path in $G-\{v_1, v_2, v_4, u\}$. 
    Then it creates an $S$-cycle disjoint from one of $R^1_1\cup R^1_2$ and $R^2_1\cup R^2_2$.
    So, the claim holds, and furthermore, when $u=v_3$, there is no $(R^3, W-V(R^3))$-path.
    It means that the two neighbors of $u$ in $H$ are contained in $Q^2\cup Q^3$.

	Let $u_1$ and $u_2$ be the two neighbors of $u$ in $H$ such that 
	\[\dist_{Q^2\cup Q^3}(v_2, u_1)< \dist_{Q^2\cup Q^3}(v_2, u_2).\]
	For each $i\in \{1, 2\}$, let $A_i$ be the connected component of $W-\{v_1, v_2, v_4, u\}$ containing $u_i$.
   Note that when $u=v_3$, we have that $A_1=Q^2-\{v_2, v_3\}$ and $A_2=Q^3-\{v_1, v_3\}$.

    As $H-u$ is a path, for each $i\in \{1, 2\}$, there is a $(A_i, W-V(A_i))$-path, say $X_i$.
    As $G$ has no two vertex-disjoint $S$-cycles, we may assume that 
     the endpoint of $X_1$ in $W-V(A_1)$ is contained in $R^2_1\cup R^2_2$, 
     and the endpoint of $X_2$ in $W-V(A_2)$ is contained in $R^1_1\cup R^1_2$.
     Recall that $w$ is the vertex in $T\setminus \{v_1, v_2, v_4\}$.
     
     We divide into two cases depending on whether $Q^1_{mid}$ is empty or not.
     
     \begin{itemize}
     	\item (Case 1. $Q^1_{mid}$ is empty.)
	
	In this case, $w$ is contained in $Q^2\cup Q^3$. Since $u\neq w$, we have $u\neq v_3$ and furthermore, $A_2$ contains $w$ because of the property that $w$ is chosen as a vertex of $S$ closest to $v_3$. 
	Let $A_2'$ be the component of $A_2-w$ that contains $u_2$.
	As $w\in T$ and $H-u$ is a path, there is a $(A_2', W-V(A_2'))$-path in $G-(T\cup \{u\})$.
	However, it is not difficult to check that there is no $(A_2', W-V(A_2'))$-path in $G-(T\cup \{u\})$, because $w$ and $u$ are in $S$.
	So, $H$ cannot exist.

	\item (Case 2. $Q^1_{mid}$ is non-empty.)

	By the construction of $T$, $w$ is a gate of $Q^1$, and thus $H$ contains no vertex of $Q^1_{mid}$.
	We first deal with the case when $u\neq v_3$.
	We introduce an auxiliary graph $F$ on the vertex set 
	\[ \{R^1_2, R^2_2, R^2_1, R^2_2, A_1, A_2, Q^1_{v_1}, Q^1_{v_2}\} \] 
	such that for $A, B\in V(F)$, $A$ is adjacent to $B$ if and only if there is an $(A, B)$-path in $G-T$.
	
	It is not difficult to see that $N_F(A_2)\subseteq \{R^1_1, R^1_2\}$ and $N_F(Q^1_{v_1})\subseteq \{R^1_1, R^1_2\}$, and symmetrically, 
	$N_F(A_1)\subseteq \{R^2_1, R^2_2\}$ and $N_F(Q^1_{v_2})\subseteq \{R^2_1, R^2_2\}$.
	We observe that if $Q^1_{v_1}$ is adjacent to $R^1_i$ in $F$ for some $i\in \{1,2\}$, 
	then $R^1_i$ has no neighbor in $\{R^2_1, R^2_2\}$; if there is such a neighbor, then we can find an $S$-cycle disjoint from $R^1_{3-i}\cup Q^3\cup R^3$.
	Symmetrically, if $Q^1_{v_2}$ is adjacent to $R^2_i$ in $F$ for some $i\in \{1,2\}$, 
	then $R^2_i$ has no neighbor in $\{R^1_1, R^1_2\}$.
	
	Now, we show that there is no path from $A_1$ to $A_2$ in $F$.
	Suppose there is a path $M$ from $A_1$ to $A_2$ in $F$. 
	By the above observation, we can see that $M$ contains an edge between $R^1_i$ and $R^2_j$ for some $i, j\in \{1,2\}$.
	Then $R^1_i$ is not adjacent to $R^1_{3-i}$ and $Q^1_{v_1}$, so it has to be adjacent to $A_2$, 
	and similarly, $R^2_j$ is adjacent to $A_1$.
	Then we can find an $S$-cycle using $W$-paths corresponding to $A_2-R^1_i-R^2_j-A_1$, 
	which is disjoint from $Q^1\cup R^1_{3-i}\cup R^2_{3-j}$.
	This contradicts to that $H-u$ contains a vertex of $A_1$ and a vertex of $A_2$.
	
	When $u=v_3$, we observed that there is no $(R^3, W-V(R^3))$-path. Thus, by the same argument as above, 
	we can derive a contradiction.

	\end{itemize}
    We conclude that 
    no $S$-cycle in $G-T$ contains a vertex of $S$ on $Q^2\cup Q^3$.     \end{clproof}
    By Claims~\ref{claim:k4++rijmid}, \ref{claim:k4++r3mid}, and \ref{claim:k4++q2mid}, 
    no $S$-cycle in $G-T$ contains a vertex of $S$ in $W$, as required.
	\end{proof}

	\begin{figure}
 \tikzstyle{v}=[circle, draw, solid, fill=black, inner sep=0pt, minimum width=3pt]
 \tikzstyle{w}=[rectangle, draw, solid, fill=black, inner sep=0pt, minimum width=5pt, minimum height=5pt]
  \centering
   \begin{tikzpicture}[scale=2]

		\node [v, label=below:$v_1$]  (c1) at (0, 0-5){};
        \node [v, label=below:$v_3$]  (c3) at (2, 0-5){};
        \node [v, label=above:$v_2$]  (c2) at (1, 1.7-5){};
        \node [v, label=below:$v_4$]  (c4) at (1, 0.8-5){};
	
		\draw(c1) to [edge node={node [right] {$P^1$}}] (c2);
		\draw(c2) to [edge node={node [right] {$R^2$}}] (c3);
		\draw(c3) to [edge node={node [below] {$R^1$}}] (c1);

		\draw(c4) to [edge node={node [right] {$Q^1$}}] (c1);        
		\draw(c4) to [edge node={node [right] {$Q^2$}}] (c2);        
		\draw(c4) to [edge node={node [left] {$Q^3$}}] (c3);        
          \draw(c1) [in=180,out=120] to [edge node={node [left] {$P^3$}}] (c2);
          \draw(c1) [in=200,out=90] to [edge node={node [right] {$P^2$}}] (c2);

\end{tikzpicture}
   \caption{The $K_4^{+++}$-subdivision in Proposition~\ref{prop:k4+++}.}
  \label{fig:K4+++}
\end{figure}
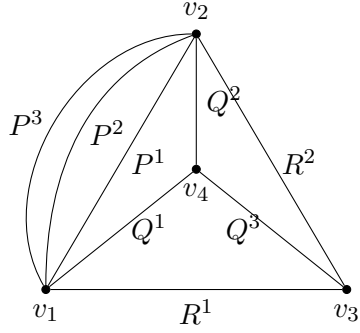

	\begin{proposition}\label{prop:k4+++}
If $G$ contains an $S$-cycle $K_4^{+++}$-subdivision, then 
either $G$ contains an $S$-cycle $K_{3,3}^+$-subdivision or $\tau (G, S)\le 4$.	
	\end{proposition}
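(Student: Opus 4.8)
The plan is to follow the template of Propositions~\ref{prop:k3triple} and \ref{prop:k4++}: assume $G$ has no $S$-cycle $K_{3,3}^+$-subdivision (otherwise the first conclusion holds), produce a set $T$ with $\abs{T}\le 4$ meeting the three hypotheses of Lemma~\ref{lem:conclusion}, and conclude $\tau(G,S)\le 4$. Writing $v_1,v_2,v_3,v_4$ for the branching vertices, $P^1,P^2,P^3$ for the three certifying paths between $v_1$ and $v_2$, and $R^1,R^2,Q^1,Q^2,Q^3$ for the remaining certifying paths as in Figure~\ref{fig:K4+++}, I would note that $\{v_1,v_2\}$ already hits every cycle of $K_4^{+++}$ (deleting $v_1,v_2$ leaves only the edge $v_3v_4$), and take $T=\{v_1,v_2,v_3,v_4\}$, so that in $G-T$ every $W$-path joins interiors of certifying paths.

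The first task is to classify these extensions. The crucial point is that, unlike $K_3^{+++}$ and $K_4^{++}$, the graph $K_4^{+++}$ is \emph{not} nice: subdividing one of $P^1,P^2,P^3$ and subdividing $Q^3$ and then joining the two new vertices does not create two disjoint cycles, so Lemma~\ref{lem:nice} is unavailable. I would therefore argue by hand, according to which pair of certifying paths contains the two endpoints. For a $W$-extension $X$ whose endpoints lie on the \emph{same} path, or on two distinct paths other than a pair consisting of a triple-edge path $P^i$ and $Q^3$, one finds a cycle $C$ of $W$ through both attachment points together with disjoint ``spare'' cycles of $W$ (a triangle $v_2v_3v_4$ or $v_1v_3v_4$, or a spare triple-edge cycle $P^j\cup P^k$), so that Lemma~\ref{lem:twocycles} produces two vertex-disjoint $S$-cycles, contradicting $\mu(G,S)\le 1$. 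The single remaining possibility is an extension $X$ from the interior of some $P^i$ to the interior of $Q^3$; if $a,b$ denote the two new subdivision vertices, then the six vertices $\{v_1,v_2,b\}$ and $\{v_3,v_4,a\}$, joined by $R^1,Q^1,R^2,Q^2$, the two halves of $P^i$, the two halves of $Q^3$, the path $X$, and a spare triple-edge path as the intra-part edge $v_1v_2$, form an $S$-cycle $K_{3,3}^+$-subdivision. Hence, under the standing assumption, $G-T$ has no $W$-extension (hypothesis two), while an $S$-cycle meeting $W$ in at most one vertex is disjoint from a triple-edge cycle $P^i\cup P^j$ avoiding that vertex, giving hypothesis one.

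The heart of the proof, and the step I expect to be the main obstacle, is the third hypothesis: that $G-T$ has no $S$-cycle through a vertex of $S\cap V(W)$, i.e.\ through some $P_{mid}$. Since the gates lie in $S$, every $(P_{mid},W-V(P_{mid}))$-path is itself a $W$-extension and so does not exist, and Lemma~\ref{lem:mid} applies to each certifying path $P$: an offending $S$-cycle $H$ must leave both ends of $P$ through $W$-paths $X_1,X_2$, and $H$ must meet at least two of $P^1,P^2,P^3$ (otherwise $H$ is disjoint from a spare triple-edge cycle). I would then, exactly as in Claim~\ref{claim:k4++rijmid}, determine the admissible landing regions of $X_1,X_2$: for each of them I would build an $S$-cycle that incorporates the $S$-vertex of $P_{mid}$ through a subpath of $P$ and is vertex-disjoint from a cycle of $W$, forcing two disjoint $S$-cycles. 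The delicate cases are $P=P^i$ and $P=Q^3$, precisely because an exit landing on the complementary member of the failing pair cannot be excluded by a spare cycle; there I would instead realize an $S$-cycle $K_{3,3}^+$-subdivision from $H$ (again using the $S$-vertex of $P_{mid}$ to keep all its cycles $S$-cycles), which is excluded by assumption. Once the third hypothesis is secured, Lemma~\ref{lem:conclusion} shows $T$ is an $S$-cycle hitting set, so $\tau(G,S)\le 4$, completing the dichotomy.
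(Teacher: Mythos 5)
Your overall architecture coincides with the paper's: the same set $T=\{v_1,v_2,v_3,v_4\}$, the same classification showing that the only $W$-extension in $G-T$ not killed by Lemma~\ref{lem:twocycles} joins a $P^i$ to $Q^3$ and yields an $S$-cycle $K_{3,3}^+$-subdivision (sound there, because such an $X$ is by definition a $W$-extension, so $W\cup X$ is an $S$-cycle subgraph), and the same reduction of the remaining work to showing that $G-T$ has no $S$-cycle through a vertex of $S\cap V(W)$, via Lemmas~\ref{lem:mid} and~\ref{lem:conclusion}.

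The gap is in your treatment of the ``delicate'' cases $P=P^i$ and $P=Q^3$. There you propose to take an exit path of the offending cycle $H$ that lands on the complementary member of the pair $(P^i,Q^3)$ and ``realize an $S$-cycle $K_{3,3}^+$-subdivision from $H$''. This step fails: such an exit path $X_1$ (say from $P^i_{v_1}$ to the interior of $Q^3$) is a $W$-path but \emph{not} a $W$-extension --- you have already established that $G-T$ has no $W$-extensions at all --- so $W\cup X_1$ is not an $S$-cycle subgraph, and the $K_{3,3}^+$-subdivision built from it contains cycles such as $P^i[v_1,a]\cup X_1\cup Q^3[b,v_3]\cup R^1$ that need not meet $S$; the single $S$-vertex of $P^i_{mid}$ lies on only one certifying path of the putative subdivision and cannot hit all of its cycles. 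The premise that these cases ``cannot be excluded by a spare cycle'' is also not quite right, and the paper closes them differently. For $P=P^i$: Lemma~\ref{lem:mid} gives \emph{two} exit paths, one from each of $P^i_{v_1}$ and $P^i_{v_2}$; neither can land on $P^j$ with $j\ne i$, so both land in $Q^1\cup Q^2\cup Q^3\cup R^1\cup R^2$, and joining their far endpoints by a path inside $(Q^1\cup Q^2\cup Q^3\cup R^1\cup R^2)-\{v_1,v_2\}$ yields a cycle through $P^i_{mid}$ (hence an $S$-cycle) avoiding $v_1,v_2$, which is disjoint from the $S$-cycle $P^j\cup P^k$ --- this works no matter where in $Q\cup R$ the exits land, including on $Q^3$. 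For $P=Q^3$: the paper shows that a gate of $Q^3$ separates the two components of $(Q^1\cup Q^2\cup Q^3\cup R^1\cup R^2)-\{v_1,v_2\}-V(Q^3_{mid})$ in $G-T$ (there is no path between them, and no single $P^i$ is reachable from both sides), contradicting that $H$ minus that gate is a path meeting both sides. You need these two arguments or equivalents; the $K_{3,3}^+$ fallback does not substitute for them.
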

		\begin{proof}
	Let $W$ be an $S$-cycle $K_4^{+++}$-subdivision in $G$.
	Let $v_1, v_2, v_3, v_4$ be the branching vertices of $W$, 
	let $P^1, P^2, P^3$ be the certifying paths from $v_1$ to $v_2$, 
	let $Q^j$ be the certifying path from $v_4$ to $v_j$ for $j\in \{1,2,3\}$, and 
	let $R^k$ be the certifying path from $v_3$ to $v_k$ for $k\in \{1,2\}$. 
	See Figure~\ref{fig:K4+++} for an illustration. Let $T=\{v_1,v_2,v_3,v_4\}$. 	
	
	If there is a $W$-extension $X$ in $G-T$ whose one endpoint is in $P^i$ and the other endpoint is in $Q^3$, 
	then $W\cup X$ has an $S$-cycle $K_{3,3}^+$-subdivision. Thus, we may assume that such an extension does not exist.
	It implies that $G-T$ has no $W$-extensions. Also, $G-T$ has no $S$-cycle hitting $W$ on one vertex.

	We will show that $G-T$ has no $S$-cycle containing a vertex in $S\cap V(W)$.
	If this is true, then by Lemma~\ref{lem:conclusion}, $T$ is an $S$-cycle hitting set and thus $\tau (G, S)\le 4$.
		By applying Lemma~\ref{lem:twocycles}, 
	we can observe that in $G-T$, 
	there is no $(P^i,P^j)$-path for distinct $i,j\in \{1,2,3\}$.

	\begin{claim}\label{claim:0000x}
	Let $i\in \{1,2,3\}$. No $S$-cycle contains a vertex of $P^i_{mid}$ in $G-T$.
	\end{claim}
	\begin{clproof}
	It is sufficient to prove for $i=1$.	
	For contradiction, suppose that $G-T$ has an $S$-cycle $H$ containing a vertex of $P^1_{mid}$.
	
	As $G-T$ has no $W$-extension, it has no $(P^1_{mid}, W-V(P^1_{mid}))$-path.
	By Lemma~\ref{lem:mid}, 
	 $H$ contains a $(P^1_{v_j}, W-V(P^1_{v_j}))$-path for each $j\in \{1, 2\}$. Call it $X_j$.
	 Then the endpoint of $X_j$ in $W-V(P^1_{v_j})$ is contained in $Q^1\cup Q^2\cup Q^3\cup R^1\cup R^2$.
	By taking a shortest path between the endpoints of $X_1$ and $X_2$ on $P^1$ in $(X_1\cup X_2\cup Q^1\cup Q^2\cup Q^3\cup R^1\cup R^2)- \{v_1, v_2\}$, 
	we can find an $S$-cycle disjoint from $P^2\cup P^3$, which leads a contradiction.
	 	\end{clproof}

\begin{claim}\label{claim:0000x1}
	No $S$-cycle in $G-T$ contains a vertex of $A_{mid}$ for some $A\in \{Q^1,Q^2,R^1,R^2\}$.
	\end{claim}
	\begin{clproof}
	Suppose such an $S$-cycle $H$ exists.
	By symmetry, it is sufficient to consider when $A=Q^1$. 
	As $G$ has no $(Q^1_{mid}, W-V(Q^1_{mid}))$-path, by Lemma~\ref{lem:mid}, 
	 $H$ contains a $(Q^1_{v_1}, W-V(Q^1_{v_1}))$-path, say $B$.
	 If the endpoint of $B$ in $W-V(Q^1_{v_1})$ is contained in $Q^1\cup Q^2\cup Q^3\cup R^1\cup R^2$, 
	 then there is an $S$-cycle disjoint from $P^1\cup P^2$.
	 Thus, we may assume that the endpoint of $B$ is contained in $P^i$ for some $i\in \{1,2,3\}$.
	 If there is a $(P^i, (Q^1\cup Q^2\cup Q^3\cup R^1\cup R^2)-V(Q^1_{v_1}))$-path, 
	then by the same reason, there is an $S$-cycle disjoint from one formed by two other paths of $P^1, P^2, P^3$. 
	
	Let $z$ be the gate of $Q^1$ that is closer to $v_1$.
	Let $I\subseteq \{1, 2, 3\}$ be the set such that for every $j\in I$, there is a $(P^j, Q^1_{v_1})$-path in $G-T$.
	By Claim~\ref{claim:k3+++compo},
	$z$ separates $(\bigcup_{j\in I}P^j) \cup Q^1_{v_1}$ and the rest of $W$ in $G-T$.
	This contradicts that $H-z$ contains a vertex of $Q^1_{v_1}$ and a vertex of $Q^1_{v_4}$ by Lemma~\ref{lem:mid}.
	\end{clproof}
	
	\begin{claim}\label{claim:0000x2}
 No $S$-cycle in $G-T$ contains a vertex of $Q^3_{mid}$.
	\end{claim}
	\begin{clproof}
	Suppose that such an $S$-cycle $H$ exists. 
	Let $C_1$ and $C_2$ be the two connected components of $(Q^1\cup Q^2\cup Q^3\cup R^1\cup R^2)-\{v_1, v_2\}-V(Q^3_{mid})$.
	If there is an $(C_1, C_2)$-path $A$, then there is an $S$-cycle in $Q^1\cup Q^2\cup Q^3\cup R^1\cup R^2\cup A$ 
	disjoint from $P^1\cup P^2$.
	So, there is no such a path.
	Also, for some $i\in \{1, 2, 3\}$, if both a $(C_1, P^i)$-path and a $(C_2, P^i)$-path exist, then 
	there is an $S$-cycle disjoint from an $S$-cycle formed by two other paths in $P^1, P^2, P^3$.
	It implies that for a gate $z$ of $Q^3$ closer to $v_4$, $z$ separates the two parts $C_1$ and $C_2$ in $G-T$.
	This contradicts the assumption that $H-z$ meets both $C_1$ and $C_2$. 
	\end{clproof}
	
	We conclude that $G-T$ has no $S$-cycles.
	\end{proof}

\section{Variations of $W_4$-extensions}\label{sec:varw4}

We now know that $G$ contains an $S$-cycle $H$-subdivision for some 
			$H\in \{W_4, K_{3,3}^+\}$.
In this section, we prove that $G$ contains an $S$-cycle $H$-subdivision for some 
			$H\in \{W_4^+, W_4^*, W_5, K_{3,3}^+\}$, 
			and in the case when $G$ contains an $S$-cycle $H$ subdivision for some 
			$H\in \{W_4^+, W_4^*, W_5\}$, we have 
			$\tau (G, S)\le 4$.

\begin{lemma}\label{lem:W4subdivision}
If $G$ contains an $S$-cycle $W_4$-subdivision, then 
it contains an $S$-cycle $H$-subdivision for some $H\in \{W_4^+, W_4^*, W_5, K_{3,3}^+\}$.
\end{lemma}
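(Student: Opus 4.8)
The plan is to mimic the reduction scheme of Lemmas~\ref{lem:reduction1} and~\ref{lem:reduction2}: starting from the given $S$-cycle $W_4$-subdivision $W$, I would produce a small hitting set, find a new $S$-cycle outside it, promote that cycle to a $W$-extension, and then read off the enlarged subdivision from the location of the endpoints of the extension. Since $\abs{E(W_4)}-\abs{V(W_4)}+1 = 8-5+1 = 4$, Lemma~\ref{lem:smallhitting} yields a set $T\subseteq S\cap V(W)$ with $\abs{T}\le 4$ meeting every cycle of $W$. Because $\tau(G,S)>4$, the graph $G-T$ still contains an $S$-cycle $C$, and Lemma~\ref{lem:intersection2} leaves two possibilities: either $\abs{V(C)\cap V(W)}=1$, or $C$ contains a $W$-extension $X$. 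In the first case $C$ is vertex-disjoint from an $S$-cycle of $W$ avoiding that single vertex, so $\mu(G,S)\ge 2$, a contradiction; hence $C$ contains a $W$-extension $X$, and everything reduces to analysing where the two endpoints of $X$ sit on $W$.

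The first thing I would record is that $W_4$ is \emph{nice}: being simple on only $5$ vertices it has no two disjoint cycles, and a short check over the few pairs of edges (disciplined by the dihedral symmetry of $W_4$) shows that subdividing any two distinct edges and joining the new vertices creates two disjoint cycles. With niceness in hand, Lemma~\ref{lem:nice} disposes of the \emph{generic} positions of $X$: if both endpoints of $X$ lie in the interiors of two \emph{distinct} certifying paths, then $W\cup X$ already contains two disjoint $S$-cycles by Lemma~\ref{lem:nice}(1), and an $S$-cycle meeting $W$ in a single internal vertex is impossible by Lemma~\ref{lem:nice}(2). The case where both endpoints of $X$ lie in the interior of the \emph{same} certifying path $\pi$ is handled directly: the cycle formed by $X$ together with the subpath of $\pi$ between its endpoints uses only internal vertices of $\pi$, hence is disjoint from a cycle of $W$ running through three branching vertices and avoiding $\pi$, again yielding two disjoint $S$-cycles.

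It then remains to treat the positions in which at least one endpoint of $X$ is a branching vertex, and this case analysis is the heart of the argument. If both endpoints are branching vertices, then (there being only one hub) they are the hub and a rim vertex, two adjacent rim vertices, or two opposite rim vertices; the hub--rim case produces an $S$-cycle $W_4^+$-subdivision, the opposite-rim case produces an $S$-cycle $W_4^*$-subdivision (two vertices of distance $2$), and in the adjacent-rim case the new path together with the corresponding rim path forms a cycle disjoint from the triangle on the hub and the two far rim vertices, a contradiction. If exactly one endpoint is a branching vertex $b$ and the other is an internal vertex $q$ of a certifying path $e$, I would split on the types of $b$ and $e$ and their incidence: the single good configuration is $b$ the hub and $e$ a rim path, which turns $q$ into a new rim vertex and $X$ into a new spoke, producing an $S$-cycle $W_5$-subdivision; every other configuration forces two disjoint $S$-cycles, namely $b$ the hub with $e$ a spoke and $b$ a rim vertex with $e$ incident to $b$ (both creating a parallel edge, hence a digon disjoint from a rim cycle), and $b$ a rim vertex with $e$ a non-incident rim or spoke path (creating a triangle disjoint from a triangle on the remaining hub and rim vertices). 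Since $W\cup X$ is an $S$-cycle subgraph, every cycle produced above is automatically an $S$-cycle, so each ``disjoint cycles'' outcome genuinely violates $\mu(G,S)\le 1$; collecting the surviving configurations, $G$ contains an $S$-cycle $W_4^+$-, $W_4^*$-, or $W_5$-subdivision, each of which is among the claimed graphs.

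The main obstacle I anticipate is the bookkeeping in the last paragraph: there are many endpoint configurations to organise, and for each ``bad'' one I must name an explicit pair of vertex-disjoint cycles — conveniently, Lemma~\ref{lem:twocycles} packages exactly the ``a new $W$-path creates two cycles, one of which pairs with a disjoint cycle of $W$'' reasoning for the cases where both endpoints lie on a common cycle of $W$, and the remaining cases admit a direct exhibition of the disjoint pair. For each ``good'' configuration I must instead verify that $W\cup X$ is genuinely a subdivision of the intended model ($W_4^+$, $W_4^*$, or $W_5$), checking in particular that it does not itself contain two disjoint cycles; this is immediate from the vertex counts, since $W_4^+$ and $W_4^*$ have only $5$ vertices and $W_5$ is a wheel.
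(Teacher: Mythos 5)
There is a genuine error in your case analysis, and it is located exactly where the $K_{3,3}^+$ outcome of the lemma comes from. In the subcase ``exactly one endpoint of $X$ is a branching vertex,'' you assert that a rim vertex $b$ joined to an internal vertex of a non-incident spoke always creates ``a triangle disjoint from a triangle on the remaining hub and rim vertices.'' Take $b=v_1$ and let $q$ be an internal vertex of the spoke $R^3$ from $w$ to the \emph{opposite} rim vertex $v_3$. Every cycle through the new path must return from $q$ to $v_1$ inside $W$, and the three options $q\text{--}w\text{--}v_1$, $q\text{--}v_3\text{--}v_2\text{--}v_1$, $q\text{--}v_3\text{--}v_4\text{--}v_1$ leave behind, respectively, $\{v_2,v_3,v_4\}$, $\{w,v_4\}$, and $\{w,v_2\}$ --- none of which spans a cycle in $W_4$ (in particular $v_2v_4$ is not an edge, so there is no ``triangle on the remaining hub and rim vertices''). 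So no two disjoint cycles arise here; instead $W\cup X$ is a $K_{3,3}^+$-subdivision with parts $\{w,v_1,v_3\}$ and $\{q,v_2,v_4\}$ and extra edge $wv_1$. This is precisely the configuration the paper uses to produce the $K_{3,3}^+$ outcome, and your final list of surviving configurations ($W_4^+$, $W_4^*$, $W_5$ only) omits it, so your argument as written does not prove the stated lemma.

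Everything else in your proposal matches the paper's proof in substance: the same hitting set of size $4$ from Lemma~\ref{lem:smallhitting}, the same application of Lemma~\ref{lem:intersection2}, and the same outcomes for the hub--rim ($W_4^+$), opposite-rim ($W_4^*$), and hub-to-rim-path ($W_5$) positions. Your use of niceness of $W_4$ to dismiss the ``both endpoints internal'' cases is a legitimate repackaging of the paper's direct argument (and $W_4$ is indeed nice). The fix is local: split ``rim vertex to non-incident spoke'' into the adjacent-spoke case (where, e.g., the cycle $v_1\text{--}X\text{--}q\text{--}v_2\text{--}v_1$ is disjoint from the triangle on $w,v_3,v_4$) and the opposite-spoke case, and record the latter as the $K_{3,3}^+$ outcome rather than a contradiction.
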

\begin{proof}
Let $W$ be an $S$-cycle $W_4$-subdivision in $G$.
	Let $v_1, v_2, v_3, v_4, w$ be the branching vertices of $W$ such that $w$ is the vertex of degree $4$, and
	for each $i\in \{1, 2, 3\}$, let $Q^i$ be the certifying path from $v_i$ to $v_{i+1}$, and
	$Q_4$ be the certifying path from $v_4$ to $v_1$, and
	for each $i\in \{1, 2, 3, 4\}$, $R^i$ be the certifying path from $w$ to $v_i$.

Note that $\abs{E(W_4)}=8$ and $\abs{V(W_4)}=5$. 
	Thus, by Lemma~\ref{lem:smallhitting}, 
	$W$ contains an $S$-cycle hitting set $T\subseteq S\cap V(W)$ such that $\abs{T}\le 4$.
	Since $\tau (G, S)>4$, $G-T$ contains an $S$-cycle, say $C$.
	By Lemma~\ref{lem:intersection2}, 
	either $C$ and $W$ meet at exactly one vertex, 
	or $C$ contains a $W$-extension $X$.
	In the former case, we have two vertex-disjoint $S$-cycles.
	So, we may assume that the latter statement holds.
	
	Assume that the endpoints of $X$ are branching vertices of $W$.
	If one of them is $w$, then $G$ contains an $S$-cycle $W_4^+$-subdivision.
	If they are $(v_1, v_3)$ or $(v_2, v_4)$, then $G$ contains an $S$-cycle $W_4^*$-subdivision.
	Otherwise, $G$ has two vertex-disjoint $S$-cycles, a contradiction.
	
	Next, we assume that exactly one of the endpoints of $X$ is a branching vertex.
	First consider when it is $w$.
	If the other endpoint is in $R^1\cup R^2\cup R^3\cup R^4$, then it creates an $S$-cycle avoiding $Q^1\cup Q^2\cup Q^3\cup Q^4$.
	If the other endpoint is in $Q^1\cup Q^2\cup Q^3\cup Q^4$, then $G$ contains an $S$-cycle $W_5$-subdivision.
	Secondly, we consider when one of $v_1, v_2, v_3, v_4$ is an endpoint of $X$.
	By symmetry, we assume that it is $v_1$.
	If the other endpoint is not in $R^3$, then $G$ contains two vertex-disjoint $S$-cycles.
	If the other endpoint is in $R^3$, then $G$ contains an $S$-cycle $K_{3,3}^+$-subdivision, as required.
	
	Lastly, suppose that both endpoints are not branching vertices.
	If both are contained in $R^1\cup R^2\cup R^3\cup R^4$, then it creates an $S$-cycle avoiding $Q^1\cup Q^2\cup Q^3\cup Q^4$.
	Otherwise, we can find a path between the endpoints in $W-w$ that contains at most two vertices of $\{v_1, v_2, v_3, v_4\}$, 
	and thus we can find an $S$-cycle disjoint from one going through $w$ and two remaining vertices in $\{v_1, v_2, v_3, v_4\}$
	Thus, we have two vertex-disjoint $S$-cycles, a contradiction. 
\end{proof}

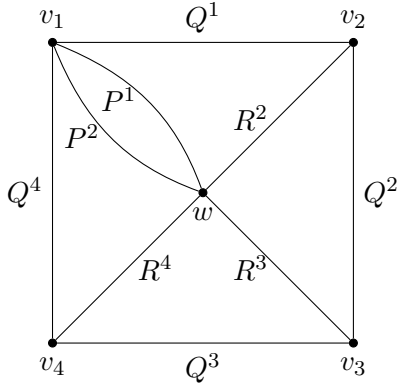
\begin{figure}
 \tikzstyle{v}=[circle, draw, solid, fill=black, inner sep=0pt, minimum width=3pt]
 \tikzstyle{w}=[rectangle, draw, solid, fill=black, inner sep=0pt, minimum width=5pt, minimum height=5pt]
  \centering
   \begin{tikzpicture}[scale=2]

         \node [v, label=below:$v_4$]  (a1) at (0-8, 0-10){};
        \node [v, label=below:$v_3$]  (a2) at (2-8, 0-10){};
        \node [v, label=above:$v_2$]  (a3) at (2-8, 2-10){};
        \node [v, label=above:$v_1$]  (a4) at (0-8, 2-10){};
            \node [v, label=below:$w$]  (a5) at (1-8, 1-10){};
    	\draw(a1) to [edge node={node [below] {$Q^3$}}] (a2);
	\draw(a2) to [edge node={node [right] {$Q^2$}}](a3);
	\draw(a3) to [edge node={node [above] {$Q^1$}}](a4);
	\draw(a4) to [edge node={node [left] {$Q^4$}}](a1);
	\draw(a5) to [edge node={node [right] {$R^4$}}] (a1);    
	\draw(a5) to [edge node={node [left] {$R^3$}}] (a2);    
 	\draw(a5) to [edge node={node [left] {$R^2$}}] (a3);    
            \draw(a5) [in=-22,out=110] to [edge node={node [left] {$P^1$}}]  (a4);
			\draw(a5) [in=-68,out=160] to [edge node={node [left] {$P^2$}}]  (a4);

\end{tikzpicture}
   \caption{The $W_4^+$-subdivision in Proposition~\ref{prop:w4+}.}
  \label{fig:prop:w4+}
\end{figure}

\begin{proposition}\label{prop:w4+}
    If $G$ contains an $S$-cycle $W_4^+$-subdivision, then either it contains an $S$-cycle $K_{3,3}^+$-subdivision or $\tau(G, S)\le 4$.
\end{proposition}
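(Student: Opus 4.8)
The plan is to mimic the template used for Propositions~\ref{prop:k4++} and \ref{prop:k4+++}: fix a small set $T\subseteq V(W)$, peel off the one extension type that produces a $K_{3,3}^+$-subdivision, and then invoke Lemma~\ref{lem:conclusion} to reduce everything to showing that no $S$-cycle of $G-T$ passes through $S\cap V(W)$. Write $v_1,v_2,v_3,v_4,w$ for the branching vertices of the $W_4^+$-subdivision $W$ as in Figure~\ref{fig:prop:w4+}, with $w$ the hub, $P^1,P^2$ the two certifying paths from $w$ to $v_1$, $R^2,R^3,R^4$ the spokes to $v_2,v_3,v_4$, and $Q^1,\dots,Q^4$ the rim paths. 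The feature that makes this case different from the $K_4$-based ones is that $W_4^+$ has \emph{five} branching vertices while the budget is only four, so we cannot place all of them in $T$. I would take $T=\{w,v_1,v_2,v_4\}$, leaving free exactly the rim vertex $v_3$ opposite $v_1$; this choice is forced to be symmetric, since the reflection fixing $v_1,v_3,w$ and swapping $v_2\leftrightarrow v_4$ preserves $T$, which lets me treat the two sides of $W$ uniformly. Note that $\{w,v_1\}$ already hits every cycle of $W$, so $W-T$ is a forest and the vertices $v_2,v_4$ are spent solely on controlling attachments.

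Next I would isolate the $K_{3,3}^+$ alternative. One checks that $W_4^+$ is nice, so Lemma~\ref{lem:nice} rules out any $W$-extension whose endpoints lie in two distinct certifying-path interiors, as well as any $S$-cycle meeting $W$ in a single internal vertex; the remaining configurations (both endpoints on one certifying path, or a one-vertex $S$-cycle at the free branch vertex $v_3$) are excluded directly by Lemma~\ref{lem:twocycles}, exhibiting a disjoint cycle of $W$ such as the doubled spoke $P^1\cup P^2$ or the rim $Q^1\cup Q^2\cup Q^3\cup Q^4$. Since the only free branching vertex is $v_3$, every surviving candidate extension in $G-T$ has $v_3$ as one endpoint and the interior of a certifying path as the other. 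Checking these, an extension from $v_3$ to the interior of any path other than $P^1,P^2$ again produces two disjoint $S$-cycles, whereas an extension $X$ from $v_3$ to an interior vertex $a_3$ of $P^1$ or $P^2$ is the good case: deleting $R^3$ from $W\cup X$ leaves an $S$-cycle $K_{3,3}^+$-subdivision, with parts $\{w,v_1,v_3\}$ and $\{v_2,v_4,a_3\}$, the extra edge $wv_1$ being supplied by the surviving $P$-path. If such an $X$ exists we are in the first outcome, so I may assume it does not; then $G-T$ has no $W$-extension and no $S$-cycle meeting $W$ in at most one vertex.

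By Lemma~\ref{lem:conclusion} it then remains to show that no $S$-cycle of $G-T$ contains a vertex of $S\cap V(W)$. As before I would first record, via Lemma~\ref{lem:twocycles}, the families of $(A,B)$-paths that cannot occur in $G-T$ (for instance $(P^1,P^2)$-paths and the spoke–spoke and rim–rim paths whose presence would split $W$ into two disjoint $S$-cycles), and then prove one claim per certifying path $A$: no $S$-cycle of $G-T$ meets $A_{mid}$. Each claim follows the now-standard pattern — if an $S$-cycle $H$ met $A_{mid}$, then, as $G-T$ has no $(A_{mid},W-V(A_{mid}))$-path, Lemma~\ref{lem:mid} forces $H$ through both gates of $A$ and into both components $A_v$, producing $(W,A_v,W-V(A_v))$-paths whose far endpoints are so constrained by the forbidden-path list that one extracts a second $S$-cycle disjoint from a fixed cycle of $W$, a contradiction; where the attachment argument does not close immediately, a gate of $A$ separates the relevant part of $W$ in $G-T$, contradicting that $H$ reaches both $A_v$. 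Finally I would treat the possibility $v_3\in S$ as in the corresponding claim of Proposition~\ref{prop:k4++}: the two neighbours of $v_3$ on such a cycle $H$ must be its neighbours in $W$ (else we recover an extension or a one-vertex $S$-cycle), so $H$ leaves $v_3$ along two of $Q^2,Q^3,R^3$, and the same disjointness bookkeeping yields a contradiction.

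I expect the main obstacle to be precisely the free vertex $v_3$: because it cannot be placed in $T$, the $S$-cycles threading through $v_3$ and its three incident paths $Q^2,Q^3,R^3$ must be analysed individually, and the delicate point is to separate cleanly the extensions through $v_3$ that legitimately build a $K_{3,3}^+$-subdivision from those that instead force two disjoint $S$-cycles. A secondary risk is that four vertices of $W$ might not block every $A_{mid}$-cycle; should some certifying path resist, the fallback — paralleling the adaptive choice in Proposition~\ref{prop:k4++} — is to trade one of $v_2,v_4$ for a gate of the offending path, though I anticipate the symmetric set $\{w,v_1,v_2,v_4\}$ to suffice.
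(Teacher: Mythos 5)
Your reduction to the two alternatives is right and matches the paper: the only surviving extension type in $G-\{v_1,w\}$ is one joining $v_3$ to an interior vertex of $P^1\cup P^2$, and deleting $R^3$ from $W\cup X$ then yields an $S$-cycle $K_{3,3}^+$-subdivision, so one may assume no $W$-extension and no one-vertex attachment. The claims for $P^i_{mid}$ and $R^j_{mid}$ also only use $\{v_1,w\}\subseteq T$ and would go through. The gap is in your choice $T=\{w,v_1,v_2,v_4\}$ and the final step for $S$-vertices on the rim. Take $u\in S$ in the interior of $Q^2$. The ``standard pattern'' you invoke needs the following: the component of $W-T-u$ containing the $v_3$-side neighbour of $u$ must be so confined that every $W$-path $B$ leaving it closes, together with a path inside $W$, an $S$-cycle disjoint from a fixed cycle of $W$. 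With your $T$ that component is the entire tripod $Q^2(u,v_3]\cup Q^3[v_3,v_4)\cup R^3(w,v_3]$; its boundary in $W-\operatorname{int}(P^1)-\operatorname{int}(P^2)-\{w,v_1\}$ consists of $u$ and $v_4$, and an escape path $B$ from a vertex of $Q^3$ near $v_4$ to $\operatorname{int}(Q^4)$, $\operatorname{int}(R^4)$ or $\operatorname{int}(P^i)$ closes a cycle through $v_4$ (or $v_1$) that need contain no vertex of $S$ at all, so no contradiction with $\mu(G,S)\le 1$ is forced. Your sketch gives no replacement argument here, and your own ``fallback'' remark concedes the point.

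This is exactly why the paper does not take $v_2,v_4$ into $T$: it sets $T=\{v_1,w\}$ together with the vertex of $S$ in $(Q^1\cup Q^2)-v_1$ closest to $v_3$ and the vertex of $S$ in $(Q^3\cup Q^4)-v_1$ closest to $v_3$. Then for any remaining rim vertex $u\in S$, the $v_3$-side component of $W-T-u$ is sandwiched between the two $S$-vertices $u$ and $a$ (the chosen one), so every path in $W-w-v_1$ leaving that component passes through $u$ or $a$; hence any escape path $B$ whose far endpoint avoids $P^1\cup P^2$ closes an $S$-cycle disjoint from $P^1\cup P^2$, and one whose far endpoint lies in $P^1\cup P^2$ closes an $S$-cycle (through $u$, $v_2$, $v_1$) disjoint from $R^3\cup R^4\cup Q^3$ — a clean contradiction in both cases. (This choice also automatically places $v_3$ in $T$ whenever $v_3\in S$, so the case you handle separately disappears.) To repair your proof you would either have to adopt this adaptive $T$, or carry out a genuinely new global analysis of how an $S$-cycle avoiding $\{w,v_1,v_2,v_4\}$ can simultaneously meet $P^1\cup P^2$ and all the triangles of $W$ through $w$; neither is supplied. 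Your side observation that $W_4^+$ is nice appears to be true but is asserted without verification; it is not needed in the paper's route.
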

\begin{proof}
    Let $W$ be an S-cycle $W_4^+$-subdivision in $G$. Let $v_1,v_2,v_3,v_4,w$ be the branching vertices of $W$,  let $P^1, P^2$ be the certifying paths from $v_1$ to $w$, $Q^j$ be the certifying path from $v_j$ to $v_{j+1}$ for $j\in \{1,2,3\}$, $Q^4$ be the certifying path from $v_4$ to $v_1$, and $R^k$ be the certifying path from $w$ to $v_k$ for $k\in \{2,3,4\}$. See Figure~\ref{fig:prop:w4+} for an illustration. Suppose that $G$ has no $S$-cycle $K_{3,3}^+$-subdivision.
  
    Let $T$ be the set obtained from $\{v_1,w\}$
    by adding the vertex of $S$ in $(Q^1\cup Q^2)-v_1$ that is closest to $v_3$ if one exists and then 
    adding the vertex of $S$ in $(Q^3\cup Q^4)-v_1$ that is closest to $v_3$ if one exists.
     Note that if $v_3\in S$, then $v_3\in T$.
    
    We observe that $G-\{v_1, w\}$ has no $W$-extension. 
    Indeed, if there is a $W$-extension whose endpoints are in $P^1\cup P^2$, then it creates an $S$-cycle disjoint from $R^3\cup R^4\cup Q^3$, while if there is a $W$-extension whose endpoints are in $W-V(P^1\cup P^2)$, then it creates an $S$-cycle disjoint from $P^1\cup P^2$.
    Assume that one endpoint is in $P^1\cup P^2$ and the other endpoint is not in $P^1\cup P^2$. If the endpoint not in $P^1\cup P^2$ is not $v_3$, then it creates an $S$-cycle 
	disjoint from one of $R^2\cup R^3\cup Q^2, R^3\cup R^4\cup Q^3$ and $Q^1\cup Q^2\cup Q^3\cup Q^4$. If this endpoint is $v_3$, then $G$ has an $S$-cycle $K_{3,3}^+$-subdivision, a contradiction. Thus, 
	 $G-\{v_1, w\}$ has no $W$-extension.
	 Furthermore, $G-\{v_1, w\}$ has no $S$-cycle meeting $W$ on exactly one vertex.
	
    	By applying Lemma~\ref{lem:twocycles}, 
	we can observe that in $G-T$, 
	\begin{itemize}
		\item    there is no $(P^1,P^2)$-path, and
		\item for $i\in \{1,2\}$ and $j\in \{2,4\}$,   there is no $(P^i,R^j-v_j)$-path.
	\end{itemize}

    \begin{claim}\label{mh5}
   Let $i\in \{1,2\}$. No $S$-cycle in $G-T$ contains a vertex of $P^i_{mid}$.
    \end{claim}
    \begin{clproof}
    It is sufficient to show for $i=1$. 
    Suppose that $G-T$ contains an $S$-cycle $H$ containing a vertex of $P^1_{mid}$.
    As $G-T$ has no $W$-extension, 
    $G-T$ has no $(P^1_{mid}, W-V(P^1_{mid}))$-path.
	As $\{v_1, w\}\subseteq T$, by Lemma~\ref{lem:mid}, 
	$H$ contains a $(P^1_v, W-V(P^1_v))$-path for each $v\in \{v_1, w\}$, say $X_v$.
	
	If the endpoint of $X_{v_1}$ in $W-V(P^1_{v_1})$ is not contained in $Q^1\cup Q^2\cup Q^3\cup Q^4$, 
		then it creates an $S$-cycle disjoint from $Q^1\cup Q^2\cup Q^3\cup Q^4$.
		So, the endpoint is contained in $Q^1\cup Q^2\cup Q^3\cup Q^4$.

    If the endpoint of $X_w$ in $W-V(P^1_w)$ is contained in $(Q^1\cup Q^2\cup Q^3\cup Q^4)-v_3$, 
    	then it creates an $S$-cycle vertex-disjoint from one of $R^2\cup R^3\cup Q^2$ and $R^3\cup R^4\cup Q^3$. 
	We observed that there is no $(P^1,P^2)$-path and no $(P^1,R^k-v_k)$-path for $k\in \{2,4\}$ in $G-T$. 
	Thus, the endpoint of $X_w$ in $W-V(P^1_w)$ is contained in $R^3$.
		
		If the endpoint of $X_{v_1}$ is contained in $Q^1\cup Q^2$, then by taking a shortest path between endpoints of $X_{v_1}$ and $X_{w}$ in $Q^1\cup Q^2\cup R^3$, 
		we can find an $S$-cycle disjoint from $P^2\cup R^4\cup Q^4$.
		By symmetry, when the endpoint of $X_{v_1}$ is contained in $Q^3\cup Q^4$, we can find an $S$-cycle disjoint from $P^2\cup R^2\cup Q^1$.
		These are contradictions. Therefore, the claim holds.    
	\end{clproof}
    
    \begin{claim}\label{mh7}
    Let $i\in \{2, 3, 4\}$. No $S$-cycle in $G-T$ contains a vertex of $R^i_{mid}$.
    \end{claim}
    \begin{clproof}
     Suppose that $G-T$ contains an $S$-cycle $H$ containing a vertex of $R^i_{mid}$ for some $i\in \{2,3,4\}$.

	First assume that $i=3$.  
	As $G-T$ has no $W$-extensions, it has no $(R^3_{mid}, W-V(R^3_{mid}))$-path.   
    As $w\in T$, by Lemma~\ref{lem:mid}, 
	$H$ contains an $(R^3_w, W-V(R^3_w))$-path, say $X_w$.
	If the endpoint of $X_w$ in $W-V(R^3_w)$ is not contained in $P^1\cup P^2$, 
	then it creates an $S$-cycle disjoint from $P^1\cup P^2$.
	So, we can assume that the endpoint is contained in $P^1\cup P^2$.
	Without loss of generality, we assume that it is contained in $P^1$.
	If there is also an $(R^3_w, P^2)$-path $B$, then $P^1\cup P^2\cup X_w\cup B\cup R^3_w$ contains an $S$-cycle
	disjoint from one of $R^2\cup R^4\cup Q^2\cup Q^3$ and $Q^1\cup Q^2\cup Q^3\cup Q^4$, a contradiction.
	So, there is no $(R^3_w, P^2)$-path.
	It shows that for every $W$-path in $G-T$ whose one endpoint is in $R^3_w$, 
	the other endpoint is contained in $R^3_w\cup P^1$.
	
	Now, suppose that there is a $W$-path $B$ in $G-T$ whose one endpoint is in $P^1$.
	We know that $G-T$ has no $(P^1, (P^2\cup R^2\cup R^4)-\{v_2, v_4\})$-path.
	If $B$ is a $(P^1, R^3-V(R^3_w)-v_3)$-path, 
	then there is an $S$-cycle disjoint from $Q^1\cup Q^2\cup Q^3\cup Q^4$.
	On the other hand, if an endpoint is contained in $Q^1\cup Q^2\cup Q^3\cup Q^4$, 
	then by taking a shortest path between the endpoints of $X_w$ and $B$ in $Q^1\cup Q^2\cup Q^3\cup Q^4\cup R^3$, 
	we can find an $S$-cycle disjoint from one of $P^2\cup Q^1\cup R^2$ and $P^2\cup R^4\cup Q^4$.
	So, this is not possible. We conclude that the endpoint of $B$ is contained in $P^1\cup R^3_w$.
	
	This implies that the gate of $R^3$ closer to $w$ separates $P^1\cup R^3_w$ from the rest of $W$ in $G-T$.
	This contradicts that there is an $S$-cycle containing a vertex of $R^3_w$ and a vertex of $R^3_{v_3}$.

    Now, we assume that $i\in \{2,4\}$. It is sufficient to show for $R^2$ by symmetry. 
  	Since $G-T$ has no $W$-extension, it has no $(R^2_{mid}, W-V(R^2_{mid}))$-path.
	As $w\in T$, by Lemma~\ref{lem:mid}, 
	$H$ contains an $(R^2_w, W-V(R^2_w))$-path, say $X_w$.
	The endpoint of $X_w$ in $W-V(R^2_w)$ cannot be contained in $P^1\cup P^2$, because there is no $(P^1\cup P^2, R^2-v_2)$-path.
	But otherwise, we can find an $S$-cycle disjoint from $P^1\cup P^2$, a contradiction.
    Thus, we prove the claim.
    \end{clproof}
    
    We prove the last claim.
    
    \begin{claim}\label{mh8}
   No $S$-cycle in $G-T$ contains a vertex of $S$ in $Q^1\cup Q^2\cup Q^3\cup Q^4$.
    \end{claim}
    \begin{clproof}
    Suppose that such a cycle $H$ exists.
    By the definition of $T$, if $v_3\in S$, then $v_3\in T$. 
    So, by symmetry, we may assume that $H$ contains a vertex of $S$ in $(Q^1\cup Q^2)-\{v_1, v_3\}$.
    Note that it is possible that $v_2\in S$ and $H$ contains $v_2$.
    
    Let $u$ be a vertex of $S$ contained in $V(Q^1\cup Q^2)\cap V(H)$.
    First claim that the two neighbors of $u$ in $H$ are neighbors of $u$ in $W$.
    Suppose for contradiction that there is a neighbor $u'$ of $u$ in $H$ that is not a neighbor in $W$.
    As $H$ is a cycle, following the direction from $u$ to $u'$, either we can find a $W$-extension in $G-T$, or $H$ meets exactly $u$ on $W$.
    As $G-T$ has no $W$-extension, $H$ meets exactly $u$ on $W$. But in this case, $H$ is disjoint from $P^2\cup R^4\cup Q^4$, a contradiction.
	Thus, the two neighbors of $u$ in $H$ are neighbors of $u$ in $W$. 
    
    Assume that $u=v_2$ and one neighbor of $u$ in $H$ is contained in $R^2$.
    As $H$ is connected, in $G-\{v_1, w, u\}$, there is a path from $R^2$ to another component of $G-\{v_1, w, u\}$. 
    If the other endpoint is not contained in $P^1\cup P^2$, then one can find an $S$-cycle disjoint from $P^1\cup P^2$, because $v_2\in S$.
    But $G-T$ has no $(R^2, P^1\cup P^2)$-path. So, this is not possible.
    We may assume that when $u=v_2$, the two neighbors of $u$ in $H$ are contained in $Q^1$ and $Q^2$, respectively.
    
    Let $u_1$ be the neighbor of $u$ in $H$ such that $\dist_{Q^1\cup Q^2}(u_1, v_3)$ is minimum.
	Let 
	$a$ be the vertex of $S$ in $(Q^1\cup Q^2)-v_1$ that is closest to $v_3$.
	As $a\in T$ and $H$ does not contain $a$, we have that $a\neq u$ and they are not neighbors in $W$.
	Let $X$ be the connected component of $G-T-u$ containing $u_1$.
	Because $H-u$ is connected, 
	there is a $(X, W-V(X))$-path, say $B$.

	If the endpoint of $B$ in $W-V(X)$ is not contained in $P^1\cup P^2$, 
	then there is an $S$-cycle disjoint from $P^1\cup P^2$ because $a, u\in S$.
	So, we may assume that this endpoint is contained in $P^1\cup P^2$.
	But in this case, $W\cup B$ contains an $S$-cycle disjoint from $R^3\cup R^4\cup Q^3$.
	This is a contradiction.
    \end{clproof}

    By Claims~\ref{mh5}, \ref{mh7}, and \ref{mh8}, $G-T$ has no $S$-cycle containing a vertex of $S$ in $W$.
    Therefore the proposition is true.
\end{proof}

\begin{figure}
 \tikzstyle{v}=[circle, draw, solid, fill=black, inner sep=0pt, minimum width=3pt]
 \tikzstyle{w}=[rectangle, draw, solid, fill=black, inner sep=0pt, minimum width=5pt, minimum height=5pt]
  \centering
   \begin{tikzpicture}[scale=2]

         \node [v, label=below:$w_2$]  (a1) at (0-8, 0-10){};
        \node [v, label=below:$v_3$]  (a2) at (2-8, 0-10){};
        \node [v, label=above:$w_1$]  (a3) at (2-8, 2-10){};
        \node [v, label=above:$v_1$]  (a4) at (0-8, 2-10){};
            \node [v, label=below:$v_2$]  (a5) at (1-8, 1-10){};
    	\draw(a1) to [edge node={node [below] {$Q^3$}}] (a2);
	\draw(a2) to [edge node={node [right] {$P^3$}}](a3);
	\draw(a3) to [edge node={node [above] {$P^1$}}](a4);
	\draw(a4) to [edge node={node [left] {$Q^1$}}](a1);
	\draw(a5) to [edge node={node [right] {$Q^2$}}, near end] (a1);

	\draw(a5) to [edge node={node [right] {$R^2$}}] (a2);    
 	\draw(a5) to [edge node={node [left] {$P^2$}}] (a3);    
			\draw(a4) [in=160,out=-70] to [edge node={node [left] {$R^3$}}, near end]  (a2);

 	\draw(a5) to [edge node={node [right] {$R^1$}}] (a4);    

\end{tikzpicture}
   \caption{The $W_4^*$-subdivision in Proposition~\ref{prop:w4*}.}
  \label{fig:prop:w4*}
\end{figure}
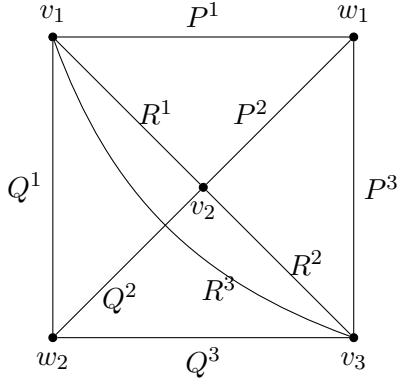

\begin{proposition}\label{prop:w4*}
    If $G$ contains an $S$-cycle $W_4^*$-subdivision, then $\tau(G, S)\le 4$.
\end{proposition}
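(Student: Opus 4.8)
The plan is to invoke Lemma~\ref{lem:conclusion} with the set $T=\{v_1,v_2,v_3,w_1\}$, so that it suffices to verify the three hypotheses of that lemma: that $G-T$ has no $S$-cycle meeting $W$ in at most one vertex, that $G-T$ has no $W$-extension, and that $G-T$ has no $S$-cycle through a vertex of $S\cap V(W)$. The guiding observation is that $W_4^*$ is exactly $K_5$ with the edge $w_1w_2$ deleted: the branching vertices $v_1,v_2,v_3$ span a triangle (realized by $R^1,R^2,R^3$), while $w_1$ and $w_2$ are each joined to all of $v_1,v_2,v_3$ (by $P^1,P^2,P^3$ and $Q^1,Q^2,Q^3$). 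In particular $W-T$ is a forest, so $T$ is an $S$-cycle hitting set of $W$, and the abundance of triangles ($v_iv_jv_k$, $v_iv_jw_1$, $v_iv_jw_2$) and $4$-cycles ($w_1v_iw_2v_j$) in $K_5-e$ will be used repeatedly to produce two vertex-disjoint $S$-cycles via Lemma~\ref{lem:twocycles}. A direct check shows that $W_4^*$ is nice, so Lemma~\ref{lem:nice} is available.

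The reason for this particular $T$, rather than something smaller, explains why four vertices (and not three) are needed. The symmetric choice $\{v_1,v_2,v_3\}$ fails, because an $S$-cycle could run through the interior of one triangle-path $R^i$ and return through the two apexes, and such a cycle need not create two disjoint $S$-cycles. Moreover every $v_i$ should lie in $T$: if some $v_k\notin T$, then a $W$-extension from $v_k$ into the interior of the opposite triangle-path produces an $S$-cycle $K_{3,3}^+$-subdivision (one checks that $W_4^*$ together with such a chord contains $K_{3,3}$ plus a within-part edge), which is the forbidden alternative outcome; and at least one of $w_1,w_2$ should lie in $T$, since an extension joining $w_1$ to $w_2$ would only yield a $K_5$-subdivision, which has no two disjoint cycles. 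Thus $T=\{v_1,v_2,v_3,w_1\}$ is essentially forced.

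For the no-extension hypothesis I would argue by the type of the endpoints of a putative $W$-extension $X$ in $G-T$. Endpoints at $v_1,v_2,v_3,w_1$ are impossible since these lie in $T$. If both endpoints are interior to two distinct certifying paths, Lemma~\ref{lem:nice}(1) gives two disjoint $S$-cycles; if they are interior to a single path, the short cycle created by $X$ is disjoint from any triangle of $W$ avoiding that path, again contradicting $\mu(G,S)\le 1$ through Lemma~\ref{lem:twocycles}. The remaining cases have $w_2$ as one endpoint, and a short case check shows that for every location of the other endpoint the graph $W\cup X$ contains two vertex-disjoint triangles, namely one through $w_2$ and a triangle $v_iv_jw_1$ on the complementary vertices, so $\mu(G,S)\le 1$ is again violated. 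The same triangle-splitting argument disposes of an $S$-cycle meeting $W$ in a single vertex (an internal vertex is handled by Lemma~\ref{lem:nice}(2), and the branching vertex $w_2$ is disjoint from the triangle $v_1v_2v_3$), establishing the first hypothesis as well.

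The main work, and the step I expect to be the obstacle, is the third hypothesis: that no $S$-cycle $H$ in $G-T$ passes through a vertex of $S\cap V(W)$, that is, through the interior $A_{mid}$ of some certifying path $A$ or through $w_2$ itself. For each path $A$ I would apply Lemma~\ref{lem:mid} (valid since $G-T$ has no $W$-extension and $W-V(A_{mid})$ still contains an $S$-cycle): such an $H$ must leave $A_{mid}$ through both gates and, at each endpoint of $A$ lying in $T$, must continue along a $W$-path into the rest of $W$. Tracing where these $W$-paths can land, one exhibits in each configuration a triangle or $4$-cycle of $W$ disjoint from $H$. The delicate cases are the paths $Q^1,Q^2,Q^3$ incident to the apex $w_2\notin T$, together with cycles through $w_2$: since $w_2$ is the one branching vertex that survives, $H$ may pass through it (its two $H$-neighbors must then be $W$-neighbors along two of the $Q^j$), and the symmetry that trivializes the other paths is broken. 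Handling these requires a careful separation argument in the spirit of Claims~\ref{claim:k4++q2mid} and \ref{mh8}, repeatedly using the triangles $v_iv_jw_1$, which remain available and disjoint from $H$ precisely because $w_1\in T$ keeps the entire $w_1$-side intact. Once these cases are ruled out, Lemma~\ref{lem:conclusion} yields that $T$ is an $S$-cycle hitting set of size $4$, so $\tau(G,S)\le 4$.
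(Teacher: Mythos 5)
There is a genuine gap: the step you yourself identify as ``the main work'' --- showing that $G-T$ has no $S$-cycle through a vertex of $S\cap V(W)$ --- is never actually carried out. You say that ``tracing where these $W$-paths can land, one exhibits in each configuration a triangle or $4$-cycle of $W$ disjoint from $H$'' and that the delicate cases around $w_2$ ``require a careful separation argument in the spirit of Claims~\ref{claim:k4++q2mid} and~\ref{mh8}.'' That is a promise, not a proof; in the other propositions of the paper this is precisely where most of the case analysis lives, and nothing in your write-up certifies that the analysis closes for your choice of $T$.

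The deeper issue is that your justification for taking $T=\{v_1,v_2,v_3,w_1\}$ is wrong, and it leads you away from a much shorter argument. You claim $\{v_1,v_2,v_3\}$ fails because ``an $S$-cycle could run through the interior of one triangle-path $R^i$ and return through the two apexes.'' But such a cycle would have to contain a $W$-path from $R^i$ to some $P^j$ or $Q^j$ (the apexes $w_1,w_2$ are endpoints of those paths), and every such $(P^j,R^i)$- or $(Q^j,R^i)$-path yields two vertex-disjoint $S$-cycles via Lemma~\ref{lem:twocycles}: one of the two cycles it creates avoids $\{v_a,v_b,w_2\}$ for suitable $a,b$ and is therefore disjoint from the triangle $Q^a\cup Q^b\cup R^c$ (respectively, from a triangle through $w_1$). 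Hence $\{v_1,v_2,v_3\}$ separates $R^1\cup R^2\cup R^3$ from $W-V(R^1\cup R^2\cup R^3)$ in all of $G$, and each side of this separation contains an $S$-cycle of $W$ (the triangle on one side, the $4$-cycle $P^1\cup Q^1\cup Q^2\cup P^2$ on the other). Any $S$-cycle of $G-\{v_1,v_2,v_3\}$ must meet $W$, meets only one side, and is thus disjoint from the $S$-cycle on the other side --- a contradiction with $\mu(G,S)\le 1$. This gives $\tau(G,S)\le 3$ with no appeal to Lemma~\ref{lem:mid} and none of the apex case analysis you defer. (Your remark that omitting some $v_k$ is ``the forbidden alternative outcome'' because it produces a $K_{3,3}^+$-subdivision also does not apply here: this proposition has no $K_{3,3}^+$ escape clause in its statement.)
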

\begin{proof}
    Let $W$ be an $S$-cycle $W_4^*$-subdivision in $G$. Let $v_1,v_2,v_3,w_1,w_2$ be the branching vertices of $W$, 
    let $P^i$ be the certifying path from $w_1$ to $v_i$ for $i\in \{1,2,3\}$, 
    let $Q^j$ be the certifying path from $w_2$ to $v_j$ for $j\in \{1,2,3\}$, 
    let $R^k$ be the certifying path from $v_k$ to $v_{k+1}$ for $k\in \{1,2\}$, and
    let $R^3$ be the path from $v_3$ to $v_1$. 
    Observe there is a rotational symmetry of $W$ along the cycle $R^1\cup R^2\cup R^3$. 
    See Figure~\ref{fig:prop:w4*} for an illustration. Let $T=\{v_1,v_2,v_3\}$.
    
    	We claim that $G-T$ has no $S$-cycle containing a vertex in $W$.
	Suppose such a cycle $C$ exists.
	By applying Lemma~\ref{lem:twocycles}, 
	we can observe that in $G-T$, 
	there is no $(X^i,R^j)$-path for $X\in \{P, Q\}$ and $i,j\in \{1,2,3\}$.
	If $X^i$ and $R^j$ share an endpoint, then it is easy.
	Suppose $X^i$ and $R^j$ do not share an endpoint; for example, consider $R^1$ and $P^3$.
	If there is an $(R^1, P^3)$-path $Y$ in $G-T$, 
	$Y$ and one of the two subpaths from the endpoint of $Y$ in $R^1$ to $w_1$ in $P^1\cup P^2\cup R^1$  
	and the subpath from $w$ to the endpoint of $Y$ in $P^3$ in $P^3$ form an $S$-cycle disjoint from one of $Q^2\cup Q^3\cup R^2$ and $Q^1\cup Q^3\cup R^3$.

	It implies that $T$ separates $R^1\cup R^2\cup R^3$ and $W-V(R^1\cup R^2\cup R^3)$.
	It further implies that if $C$ contains a vertex of $R^1\cup R^2\cup R^3$, then 
	it is disjoint from $P^1\cup P^2\cup Q^2\cup Q^1$, 
	while if $C$ contains a vertex of $W-V(R^1\cup R^2\cup R^3)$, 
	then it is disjoint from $R^1\cup R^2\cup R^3$.
	Both are contradictions. We conclude that $G-T$ has no $S$-cycle meeting $W$, 
	and $\tau(G, S)\le 3$.
	\end{proof}

\begin{proposition}\label{prop:w5}
	If $G$ contains an $S$-cycle $W_5$-subdivision, then $\tau(G, S)\le 2$.
\end{proposition}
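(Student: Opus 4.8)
The plan is to exhibit an explicit $S$-cycle hitting set of size two: the hub of the wheel together with a single rim vertex. Write $w$ for the branching vertex of degree $5$ (the hub), let $v_1,\dots,v_5$ be the rim branching vertices in cyclic order, let $S^i$ be the spoke certifying path from $w$ to $v_i$, and let $R^i$ be the rim certifying path from $v_i$ to $v_{i+1}$ (indices modulo $5$). For each $i$ set $\Delta_i:=S^i\cup R^i\cup S^{i+1}$, the hub triangle through $w,v_i,v_{i+1}$, and let $Z:=R^1\cup\cdots\cup R^5$ be the rim. Since $W$ is an $S$-cycle subgraph, every $\Delta_i$ and also $Z$ are $S$-cycles. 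Every $S$-cycle that contains $w$ is met by $w$, so the whole statement reduces to the following key claim: \emph{all $S$-cycles of $G$ that avoid $w$ pass through one common vertex $u$}. Granting it, $\{w,u\}$ meets every $S$-cycle and $\tau(G,S)\le 2$.

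Toward the key claim I would first record a \emph{locality} statement, obtained by feeding pairs of vertex-disjoint hub triangles into Lemma~\ref{lem:twocycles}: $G$ has no $W$-path joining two distinct spokes, joining a spoke to a non-incident rim path, or joining two non-consecutive rim paths. In each case one chooses a cycle of $W$ through both endpoints of the $W$-path so that the two resulting subcycles are simultaneously vertex-disjoint from two of the triangles $\Delta_1,\dots,\Delta_5$ (for instance, for a $(S^1,S^3)$-path the ``upper'' subcycle is disjoint from $Z$ and the ``lower'' one from $\Delta_4$). Consequently the attachments of $G$ to $W-w$ are all local, and any $S$-cycle $C$ avoiding $w$ stays within the rim region. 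I would then prove a \emph{wide arc} property: for every $k$, the $S$-cycle $C$ must intersect $\Delta_k$, since otherwise $C$ and $\Delta_k$ are two disjoint $S$-cycles. Using locality to prevent $C$ from threading through a spoke interior while dodging its endpoints, this upgrades to the statement that $C$ cannot avoid two cyclically consecutive rim vertices; hence any two hub-avoiding $S$-cycles share rim vertices.

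The heart of the proof is upgrading ``pairwise meeting'' to a single common vertex $u$. Here I would use that $S\cap V(W)$ is a transversal of all cycles of $W$ (as $W$ is an $S$-cycle subgraph), so in particular $Z$ carries an $S$-vertex, and I would exploit the following tension. Whenever a local attachment closes a long rim arc into a hub-avoiding $S$-cycle $C$ (spanning four rim vertices and missing one, say $v_i$), the same attachment closes the complementary short arc into a \emph{narrow} cycle $D$ spanning the three consecutive rim vertices $v_{i-1},v_i,v_{i+1}$; this $D$ is disjoint from the opposite triangle $\Delta_{i+2}$, so $\mu(G,S)\le 1$ forces $D$ to carry no $S$-vertex, which drives all $S$-vertices of $C$ away from the skipped region $\{v_{i-1},v_i,v_{i+1}\}$ and from the arcs $R^{i-1},R^i$. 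Imposing these constraints for all attachments at once, the only way to keep $Z$ an $S$-cycle while realizing the various hub-avoiding $S$-cycles is for them all to pass through a single rim vertex (or through a gate on its certifying path, handled via Lemma~\ref{lem:mid}); this vertex is $u$. The delicate configuration to exclude is a family of five hub-avoiding $S$-cycles each missing a distinct rim vertex, whose common intersection is empty: I would rule it out by observing that making all five into $S$-cycles while keeping every narrow companion non-$S$ would force $S$ to avoid every rim arc, contradicting that $Z$ is an $S$-cycle.

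With $u$ fixed, the conclusion is immediate: a cycle either contains $w$ or, avoiding $w$, contains $u$ by the key claim, so $\{w,u\}$ meets every $S$-cycle and $\tau(G,S)\le 2$. If one prefers the framework of Lemma~\ref{lem:conclusion} with $T=\{w,u\}$, one verifies separately that $G-T$ has no $S$-cycle meeting $W$ in at most one vertex (such a cycle avoids $w$ and is therefore disjoint from a triangle $\Delta_k$ not containing its unique contact point) and that $G-T$ has no $W$-extension (by locality). I expect the main obstacle to be precisely this common-vertex step: converting the pairwise-intersection and hub-triangle obstructions into one hitting vertex, and especially excluding the five-cycles-missing-distinct-vertices configuration. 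This will require combining Lemma~\ref{lem:twocycles}, Lemma~\ref{lem:mid} to cope with $S$-vertices lying in the interiors of certifying paths, and the transversal property of $S\cap V(W)$ that is forced by $W$ being an $S$-cycle subgraph.
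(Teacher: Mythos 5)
Your choice of the hub $w$ as one of the two hitting vertices, and your use of Lemmas~\ref{lem:twocycles} and~\ref{lem:mid} to localize attachments to $W$, match the paper. But the center of your argument --- the ``key claim'' that all hub-avoiding $S$-cycles share a single common vertex $u$, to be obtained by first showing they pairwise intersect on the rim and then ``upgrading'' to a common vertex while excluding a configuration of five cycles each missing a distinct rim vertex --- is not a proof but a plan, and it is the part you yourself flag as the main obstacle. Pairwise-intersecting cycles have no Helly property, so the upgrade step needs a genuine new idea that you do not supply; the ``wide arc'' property and the exclusion of the five-cycle configuration are asserted from heuristics about which narrow companion cycles ``must carry no $S$-vertex,'' without controlling where the hub-avoiding $S$-cycles actually live (they need not be contained in $W$ plus a single chord, and their $S$-vertices need not be branching vertices). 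As written, the proposal does not close.

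The missing idea is that the second hitting vertex can simply be taken to be \emph{any} vertex $q$ of $S$ on the rim $Q=Q^1\cup\cdots\cup Q^5$; such a vertex exists because the rim is a cycle of the $S$-cycle subgraph $W$. You record exactly this fact (``$Z$ carries an $S$-vertex'') but use it only as a constraint in your configuration analysis rather than as the definition of $u$. With $T=\{w,q\}$ the whole problem reduces, via Lemma~\ref{lem:conclusion}, to showing that no $S$-cycle of $G-T$ contains a vertex of $S\cap V(W)$: the paper's Claim~\ref{claim:rmid} handles $S$-vertices interior to a spoke $R^i$ by Lemma~\ref{lem:mid} together with the absence of spoke-to-spoke paths, and Claim~\ref{claim:qmid} handles $S$-vertices on the rim by observing that the two components of $W-T-x$ around such a vertex $x$ cannot be reconnected by any $W$-path without creating two disjoint $S$-cycles (any reconnecting path in $W-w$ passes through $x$ or $q$, both in $S$, and uses at most three rim branching vertices, leaving a disjoint hub triangle). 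No common-vertex statement, and in particular no Helly-type argument, is needed.
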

  	\begin{proof}
	Let $W$ be an $S$-cycle $W_5$-subdivision in $G$. 
	Let $v_1, v_2, v_3, v_4, v_5, w$ be the branching vertices of $W$ where $w$ is the vertex of degree $5$ in $W$. 
	For each $i\in \{1,2,3,4\}$, let $Q^i$ be the certifying path from $v_i$ to $v_{i+1}$, and
	let $Q^5$ be the certifying path from $v_5$ to $v_1$, and
	for each $i\in \{1, 2,3, 4, 5\}$, let $R^i$ be the certifying path from $w$ to $v_i$.
	Let $Q:=Q^1\cup \cdots \cup Q^5$. 
	We choose any vertex $q$ of $S$ in $Q$
	and set $T=\{w, q\}$.
	
	We observe that $G-w$ has no $W$-extension. Suppose such a $W$-extension $P$ exists and let $x,y$ be its endpoints.
	In $W-w$, there is a path from $x$ to $y$ containing at most three vertices of $\{v_1, \ldots, v_5\}$.
	Then $P$ with this subpath forms an $S$-cycle disjoint from an $S$-cycle of $W$ going through $w$ and the two remaining vertices of $\{v_1, \ldots, v_5\}$, a contradiction. So, $G-w$ has no $W$-extension.
		 Furthermore, $G-w$ has no $S$-cycle meeting $W$ on exactly one vertex.

	 	By applying Lemma~\ref{lem:twocycles}, 
	we can observe that in $G-T$, 
	there is no $(R^i-v_i, R^j-v_j)$-path for distinct $i,j\in \{1,2,3,4,5\}$.

	\begin{claim}\label{claim:rmid}
	Let $i\in \{1,2,3,4,5\}$.
	No $S$-cycle in $G-T$ contains a vertex in $R^i_{mid}$.
	\end{claim}
	\begin{clproof}
	It suffices to show for $i=1$.
	 Suppose that $G-T$ has an $S$-cycle $H$ containing a vertex of $R^1_{mid}$.
 	As $G-T$ has no $W$-extension,
	it has no $(R^i_{mid}, W-V(R^i_{mid}))$-path.
	So, by Lemma~\ref{lem:mid}, 
	there is an $(R^i_w, W-V(R^i_w))$-path in $G-T$, say $X$. 
	Let $y$ be the endpoint of $X$ such that $y\notin V(R^i_w)$.
	Since $G-T$ has no $(R^a-v_a, R^b-v_b)$-path for distinct $a, b\in \{1,2,3,4,5\}$, 
	$y$ is in $Q^1\cup \cdots \cup Q^5$.
	
	If $y$ is in $Q^1\cup Q^2\cup Q^3-\{v_4\}$, then the $S$-cycle in $R^1\cup Q^1\cup Q^2\cup Q^3\cup X$ is disjoint from the $S$-cycle $R^4\cup R^5\cup Q^4$.
	If $y$ is in $Q^4\cup Q^5$, then the $S$-cycle in $R^1\cup Q^4\cup Q^5 \cup X$ is disjoint from the $S$-cycle $R^2\cup R^3\cup Q^2$.
	We conclude that no $S$-cycle in $G-T$ contains a vertex in $R^i_{mid}$.
	\end{clproof}
	
		\begin{claim}\label{claim:qmid}
	No $S$-cycle in $G-T$ contains a vertex of $S$ in $Q$.
	\end{claim}
	\begin{clproof}
	Suppose that such an $S$-cycle $H$ exists and let $x\in V(H)\cap V(Q)\cap S$.
	As $G-T$ has no $W$-extension and has no $S$-cycle meeting $W$ on one vertex, the neighbors of $x$ in $H$ are contained in $W$, 
	and furthermore, when $x=v_i$ for some $i$, its neighbor in $H$ is not contained in $R^i$.
	Since $q\in T$, $q$ is not a neighbor of $x$ in $W$. 
	Let $x_1$ and $x_2$ be the neighbors of $x$ in $H$, and 
	let $C_1$ and $C_2$ be the connected components of $W-T-x$ containing $x_1$ and $x_2$, respectively. 

	Suppose there exists a $(C_1, W-V(C_1))$-path $X$ and let $y, z$ be the endpoints of $X$ such that  $y\in V(C_1)$. 
	Clearly, any path from $y$ to $z$ in $W-w$  contains a vertex of $S$, because $x, q\in S$.
	It is not difficult to see that there is a path from $y$ to $z$ in $W-w$ contains at most three vertices of $\{v_1, \ldots, v_5\}$.
	Then $X$ and this subpath create an $S$-cycle disjoint from the cycle going through $w$ and two remaining vertices of $W$.
	This is a contradiction.	
	\end{clproof}
	
	We conclude that $G-T$ has no $S$-cycles, and $\tau(G, S)\le 2$.
	\end{proof}

\section{$K_{3,3}^+$-subdivision case}\label{sec:k33final}

We complete the proof of Theorem~\ref{thm:main1} by showing that if $G$ contains an $S$-cycle $K_{3,3}^+$-subdivision, 
then $\tau(G, S)\le 4$.
\begin{proposition}\label{prop:k33+subdivision}
	If $G$ contains an $S$-cycle $K_{3,3}^{+}$-subdivision, then $\tau(G, S)\le 4$.
		\end{proposition}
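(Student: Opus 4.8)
The plan is to produce an explicit set $T\subseteq V(W)$ with $\abs{T}\le 4$ and then invoke Lemma~\ref{lem:conclusion}: I will verify that $G-T$ has no $W$-extension, no $S$-cycle meeting $W$ in at most one vertex, and no $S$-cycle through $S\cap V(W)$. Write the two parts of the underlying $K_{3,3}$ as $\{a_1,a_2,a_3\}$ and $\{b_1,b_2,b_3\}$, with the extra edge joining $b_2$ and $b_3$; let $P^{ij}$ be the certifying path from $a_i$ to $b_j$ and let $L$ be the certifying path realizing the edge $b_2b_3$. The decisive structural feature is that the only triangles of $K_{3,3}^+$ are the $a_ib_2b_3$ (which all share $b_2$ and $b_3$), while any two disjoint edges of $K_{3,3}$ lie on a $4$-cycle $a_ib_pa_kb_q$. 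I would first record that $K_{3,3}^+$ is \emph{nice}: it has no two disjoint cycles, since two disjoint cycles would have to be two triangles partitioning its six vertices, which is impossible; and subdividing any two edges and joining the new vertices always yields two disjoint cycles, by a short case check over the few edge-pair types. Hence Lemma~\ref{lem:nice} is available throughout.

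For the choice of $T$ I would take the three branching vertices $b_1,b_2,b_3$ carrying the extra edge, augmented if necessary by a single gate chosen to control the one residual family of $S$-cycles, so that $\abs{T}\le 4$; note that $\{b_2,b_3\}$ alone already destroys every triangle and every $4$-cycle through $b_2b_3$, which is what makes these vertices the natural backbone of $T$. For the first two conditions of Lemma~\ref{lem:conclusion} I would argue as follows. By Lemma~\ref{lem:nice}, any $W$-extension between internal vertices of two distinct certifying paths, and any $S$-cycle meeting $W$ in a single internal vertex, already yields two disjoint $S$-cycles. Extensions and single-vertex attachments at an $a_i$ are killed because the cycle they create through $b_1$ (or through a suitable $4$-cycle) is disjoint from the triangle $a_kb_2b_3$ on a free index $k$; this is exactly the situation handled by Lemma~\ref{lem:twocycles}. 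Extensions incident to $b_1,b_2,b_3$ are excluded outright, since these vertices lie in $T$. This establishes that $G-T$ has no $W$-extension and no $S$-cycle meeting $W$ in one vertex.

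The heart of the argument is the third condition. Using Lemma~\ref{lem:twocycles} with the triangles $a_ib_2b_3$ and the $4$-cycles $a_ib_pa_kb_q$ as ready-made disjoint partners, I would show that almost every type of $W$-path is forbidden because it produces two disjoint $S$-cycles; the surviving $W$-paths are essentially those parallel to the star at $b_1$, namely the $(P^{i1},P^{j1})$-paths, together with those hugging the extra-edge path $L$. I would then treat the certifying paths one at a time and prove, via Lemma~\ref{lem:mid}, that no $S$-cycle of $G-T$ runs through $P^{ij}_{mid}$ or $L_{mid}$: the two gates of such a path separate its interior from the rest of $W$ in $G-T$, so any $S$-cycle meeting that interior would be forced to re-enter through a $W$-path of one of the forbidden types, a contradiction.

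The step I expect to be the main obstacle is the last configuration, namely $S$-cycles that thread simultaneously through several of the legs $a_1,a_2,a_3$ and/or run along $L$. For these, no single triangle $a_kb_2b_3$ is vertex-disjoint from the cycle, so Lemma~\ref{lem:twocycles} does not apply verbatim, and this is precisely what forces the fourth vertex of $T$. The crux will be to show that such a cycle is compelled either to use one of the already-forbidden $W$-paths or to traverse a certifying path past the chosen gate, which restores the separation. This is the analogue of the final claims in Propositions~\ref{prop:k4++} and~\ref{prop:w4+}, and I anticipate that it will require the most careful bookkeeping, since $K_{3,3}^+$ has ten certifying paths and the interaction between the star at $b_1$ and the extra-edge path $L$ must be tracked simultaneously.
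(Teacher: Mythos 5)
Your plan coincides with the paper's proof in all essential respects: you choose the same hitting set $T$ (the three branching vertices of the part carrying the extra edge, augmented by a gate of the extra certifying path when its interior meets $S$), you reduce to the three hypotheses of Lemma~\ref{lem:conclusion}, and you discharge them with the same tools, namely Lemma~\ref{lem:twocycles} to forbid most $W$-path types and Lemma~\ref{lem:mid} to force any surviving $S$-cycle through a $P_{mid}$ to re-enter $W$ along a forbidden path. Your one genuine addition is the claim that $K_{3,3}^+$ is \emph{nice}; this appears to be true (no two disjoint cycles since the only triangles pairwise share $b_2,b_3$, and the subdivision check goes through), and it would let Lemma~\ref{lem:nice} absorb the internal-vertex cases of the no-extension claim, whereas the paper verifies that claim entirely by hand. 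The caveat is that your proposal stops where the paper's proof actually spends its effort: the final case analysis for $S$-cycles through the interiors of the three certifying paths at $b_1$ (the paper's $P^{3,i}$) and for $S$-vertices sitting at the branching vertices $a_i\notin T$, where the decisive step is that a re-entry path landing on the extra path $L$ closes, together with one of the two arcs of $L\cup P^{11}\cup P^{21}\cup P^{31}$, an $S$-cycle disjoint from a $4$-cycle of $W$. You correctly predict that this is the obstacle and that the gate vertex restores the separation, but you do not carry it out; as a completed argument it would need exactly the bookkeeping the paper performs in its last two claims.
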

	\begin{proof}
Let $W$ be an $S$-cycle $K_{3,3}^+$-subdivision in $G$.
	Let $v_1, v_2, v_3, w_1, w_2, w_3$ be the branching vertices of $W$ such that $(\{v_1, v_2, v_3\}, \{w_1, w_2, w_3\})$ corresponds to the bipartition of $K_{3,3}$, 
	and there is also additional certifying path from $v_1$ to $v_2$. 
	For each $i, j\in \{1,2,3\}$, $P^{i,j}$ be the certifying path from $v_i$ to $w_j$,
	and let $Q$ be the additional certifying path from $v_1$ to $v_2$.
	
	Let $T$ be the set obtained from $\{v_1,v_2,v_3\}$
    by adding a gate of $Q$ if $Q_{mid}$ is not empty.
    Let $B=\{v_1, v_2, v_3, w_1, w_2, w_3\}$.
    
    	By applying Lemma~\ref{lem:twocycles}, 
	we can observe that in $G-T$, 
	\begin{itemize}
		\item for $i\in \{1, 2\}$ and distinct $j_1, j_2\in \{1, 2, 3\}$, 
		there is no $(P^{i, j_1}, P^{i, j_2})$-path except when its endpoints are $w_{j_1}$ and $w_{j_2}$, and  
		\item for $i\in \{1, 2\}$ and $j\in \{1, 2, 3\}$, 
		there is no $(P^{i, j}, Q)$-path.
	\end{itemize}

	In the next claim, we show that $G-T$ has no $W$-extension.
	
	\begin{claim}\label{claim:distinctend}
	There is no $W$-extension in $G-T$.
	\end{claim}
	\begin{clproof}
	Note that $W-\{v_1, v_3\}$ is a tree. Suppose $G-T$ has a $W$-extension $X$.
	Let $x,y$ be the endpoints of $X$.
	
	Suppose that $x\in V(Q)$.
	In this case, the unique path from $x$ to $y$ in $W-\{v_1, v_3\}$ uses at most one vertex of $\{w_1, w_2, w_3\}$.
	Let $w_{j_1}, w_{j_2}$ be two vertices not contained in the path from $x$ to $y$ in $W-\{v_1, v_3\}$. 
	Then the union of $X$ and the path from $x$ to $y$ in $W-\{v_1, v_3\}$
	is disjoint from $P^{1, j_1}\cup P^{1, j_2}\cup P^{3, j_1}\cup P^{3, j_2}$, a contradiction.
	So, we may assume that $X$ has no endpoint in $V(Q)$.
	
	Suppose that $x=w_1$.
	If $y=w_j$ for some $j\in \{2,3\}$, 
	then two $S$-cycles $X\cup P^{3, 1}\cup P^{3, j}$ and $Q\cup P^{1, 5-j}\cup P^{2, 5-j}$ are vertex-disjoint, which is a contradiction. 
	So, we may assume that $y\notin \{w_2, w_3\}$.
	Then there is a path from $w_1$ to $y$ in $W$, which contains at most one vertex of $\{v_1, v_2, v_3\}$ and contains no vertex of $\{w_2, w_3\}$.
	It implies that there are two vertex-disjoint $S$-cycles, a contradiction.
	By the same argument, we may assume that any of $w_1, w_2, w_3$ is not an endpoint of $X$.	
	
	Now assume that the two endpoints of $X$ are contained in $(\bigcup_{i, j\in \{1,2,3\}} V(P^{i,j})) \setminus B$.
	In case when the two certifying paths containing $x$ and $y$ share an endpoint, then it is easy to see that there are two vertex-disjoint $S$-cycles.
	We assume that the two paths, say $P^{i_1, j_1}$ and $P^{i_2, j_2}$, containing $x$ and $y$ respectively, do not share an endpoint.
	Let $i_3\in \{1,2,3\}\setminus \{i_1, i_2\}$ and $j_3\in \{1,2,3\}\setminus \{j_1, j_2\}$.
    In this case, the $S$-cycle in $P^{i_1, j_1}\cup P^{i_2, j_2}\cup P^{i_2, j_1}\cup X$ is disjoint from 
    the $S$-cycle $P^{i_1, j_2}\cup P^{i_1, j_3}\cup P^{i_3, j_2}\cup P^{i_3, j_3}$, which leads a contradiction.
	\end{clproof}
	Also, $G-T$ has no $S$-cycle meeting $W$ on exactly one vertex.

	We will show that $G-T$ has no $S$-cycle containing a vertex in $S\cap V(W)$.
	If this is true, then by Lemma~\ref{lem:conclusion}, $T$ is an $S$-cycle hitting set and thus $\tau (G, S)\le 4$.
	By the choice of $T$ and Lemma~\ref{lem:mid}, no $S$-cycle contains a vertex of $Q_{mid}$.

	\begin{claim}
	Let $i\in \{1,2\}$ and $j\in \{1,2,3\}$.
	No $S$-cycle in $G-T$ contains a vertex of $S$ in $P^{i,j}$. 
	\end{claim}
	\begin{clproof}
	First we show that no $S$-cycle in $G-T$ contains a vertex in $P^{i, j}_{mid}$.
	By symmetry, it is sufficient to show for $i=j=1$.
			Suppose for contradiction that there is such an $S$-cycle.
			As every $(P^{1,1}_{mid}, W-V(P^{1,1}_{mid}))$-path in $G-T$ is a $W$-extension,
			by Claim~\ref{claim:distinctend}, 
			there is no $(P^{1,1}_{mid}, W-V(P^{1,1}_{mid}))$-path in $G-T$.		
				So, by Lemma~\ref{lem:mid}, 
			  there is a $(P^{1,1}_{v_1}, W-V(P^{1,1}_{v_1}))$-path, say $Y$.
			  Let $x$ and $y$ be the endpoints of $Y$ such that $x\in V(P^{1,1}_{v_1})$.
			  We observed that $y$ cannot be in $P^{1,2}\cup P^{1,3}\cup Q$, and it cannot be in $V(P^{1,1})\setminus V(P^{1,1}_{v_1})$.
			   We analyze the remaining cases.
			  
			  If $y\in V(P^{t, 1})$ for some $t\in \{2,3\}$, then $Y$ with the subpath of $P^{1,1}\cup P^{t,1}$ from $x$ to $y$
			  forms an $S$-cycle disjoint from $P^{1,2}\cup P^{1,3}\cup P^{5-t,2}\cup P^{5-t,3}$.
			If $y\in V(P^{p,q})\setminus B$ for some $p,q\in \{2,3\}$,
			then the $S$-cycle in $Y\cup P^{1,1}\cup P^{p,1}\cup P^{p,q}$ is disjoint from 
				the $S$-cycle $P^{1,2}\cup P^{1,3}\cup P^{5-p,2}\cup P^{5-p,3}$. 
				So, both cases are not possible.
				Thus, there is no $(P^{1,1}_{v_1}, W-V(P^{1,1}_{v_1}))$-path, a contradiction.
				We conclude that no $S$-cycle in $G-T$ contains a vertex in $P^{i, j}_{mid}$ for all $i\in \{1,2\}$ and $j\in \{1,2,3\}$.
				
				Now, we assume that $w_k\in S$ and there is an $S$-cycle $H$ in $G-T$ containing $w_k$ for some $k\in \{1,2,3\}$.
				As $G-T$ has no $W$-extension, the two neighbors of $w_k$ in $H$ are contained in $W$.
				So, one of the neighbors of $w_k$ in $H$ is contained in 
				$V(P^{\ell,k})\setminus B$ for some $\ell \in \{1,2\}$. 
				Then by the above argument, we can show that 
				such an $S$-cycle does not exist.
			\end{clproof}
			
			\begin{claim}
			Let $i\in \{1,2,3\}$. No $S$-cycle in $G-T$ contains a vertex of $P^{3, i}_{mid}$.
			\end{claim}
			\begin{clproof}
			It suffices to show for $i=1$.
			Suppose for contradiction that such an $S$-cycle $H$ exists, 
			and let $z\in V(H)\cap V(P^{3,i}_{mid})\cap S$. 
			As $G-T$ has no $W$-extension,  
			the two neighbors of $z$ in $H$ are the neighbors in $W$.
			Let $z_1$ and $z_2$ be the two neighbors of $z$ such that $\dist_{P^{3,i}}(v_3, z_1)\le \dist_{P^{3,i}}(v_3, z_2)$, 
			and let $C_1$ and $C_2$ be the two components of $W-T-z$ containing $z_1$ and $z_2$, respectively.
			Since $H-z$ is a path, there is a $(C_2, W-V(C_2))$-path in $G-T$, say $Y$.
	 	Let $y_1$ and $y_2$ be the endpoints of $Y$ such that $y_1\in V(C_2)$.
			
			Assume that $y_2$ is contained in $C_1\cup (\bigcup_{i\in \{1,2,3\}, j\in \{2,3\}}P^{i,j})$.
			Then the path from $y_1$ to $y_2$ in $W-\{v_1, v_2\}$ and $Y$ form an $S$-cycle that is vertex-disjoint from 
			one of $Q\cup P^{1,2}\cup P^{2,2}$ and $Q\cup P^{1,3}\cup P^{2,3}$.
			This is a contradiction. Thus, we may assume that $y_2$ is contained in $Q$.
			
			Observe that there are two paths from $y_1$ to $y_2$ in $Q\cup P^{1,1}\cup P^{2,1}\cup P^{3,1}$, where one of them must contain a vertex of $S$. Thus, $Y$ and one of the two paths form an $S$-cycle, 
			and this $S$-cycle is disjoint from 
			one of $P^{1,2}\cup P^{1,3}\cup P^{3,2}\cup P^{3,3}$ and 
			    $P^{2,2}\cup P^{2,3}\cup P^{3,2}\cup P^{3,3}$. This is a contradiction.

			This proves the claim.
			\end{clproof}
				
			We conclude that $G-T$ has no $S$-cycles, as required.
\end{proof}

\begin{proof}[Proof of Theorem~\ref{thm:main1}]
Suppose that $\mu(G, S)\le 1$ and $\tau(G, S)>4$.

By Lemma~\ref{lem:reduction1}
	$G$ contains an $S$-cycle $H$-subdivision for some 
	$H\in \{K_3^{+++}, K_4\}$, 
	and by Proposition~\ref{prop:k3triple}, 
	if $G$ contains an $S$-cycle $K_3^{+++}$-subdivision, then $\tau(G,S)\le 3$, a contradiction.
	So, $G$ contains an $S$-cycle $K_4$-subdivision.
Then by Lemma~\ref{lem:reduction2}
	$G$ contains an $S$-cycle $H$-subdivision for some 
			$H\in \{K_4^{++}, K_4^{+++}, W_4, K_{3,3}^+\}$.
			By Propositions~\ref{prop:k4++} and \ref{prop:k4+++}, 
	if $G$ contains an $S$-cycle $H$-subdivision for some 
			$H\in \{K_4^{++}, K_4^{+++}\}$, 
			then $G$ contains an $S$-cycle $K_{3,3}^+$-subdivision.
	So, 	$G$ contains an $S$-cycle $H$-subdivision for some 
			$H\in \{W_4, K_{3,3}^+\}$.

	By Lemma~\ref{lem:W4subdivision}, 
if $G$ contains an $S$-cycle $W_4$-subdivision, then 
it contains an $S$-cycle $H$-subdivision for some $H\in \{W_4^+, W_4^*, W_5, K_{3,3}^+\}$.
When $G$ contains an $S$-cycle $H$-subdivision for some $H\in \{W_4^+, W_4^*, W_5\}$
we have $\tau(G, S)\le 4$ by Propositions~\ref{prop:w4+}, \ref{prop:w4*}, and \ref{prop:w5}.
	Thus, $G$ contains an $S$-cycle $K_{3,3}^+$-subdivision, and in this case, $\tau(G, S)\le 4$ by Proposition~\ref{prop:k33+subdivision}.
	This is a contradiction.
	
We conclude that $\tau(G, S)\le 4$.
\end{proof}

\section{Concluding notes}
We prove that if a rooted graph $(G, S)$ has no two vertex-disjoint $S$-cycles, then $\tau(G, S)\le 4$, and 
this bound cannot be improved to $3$. A natural question is to determine the tight bound when $(G, S)$ has no three vertex-disjoint $S$-cycles.

\begin{question}
What is the minimum integer $c$ such that every rooted graph $(G, S)$ with $\mu(G, S)\le 2$ satisfies $\tau(G, S)\le c$?
\end{question}

\providecommand{\bysame}{\leavevmode\hbox to3em{\hrulefill}\thinspace}
\providecommand{\MR}{\relax\ifhmode\unskip\space\fi MR }
\providecommand{\MRhref}[2]{%
  \href{http://www.ams.org/mathscinet-getitem?mr=#1}{#2}
}
\providecommand{\href}[2]{#2}

\end{document}